\newtheorem{lemma}{Lemma}
\newtheorem{proposition}{Proposition}
\theoremstyle{definition}
\newtheorem{definition}{Definition}
\newtheorem{remark}{Remark}
\newtheorem{example}{Example}
\def\NN{{\mathbb N}}
\def\L{\mathcal L}
\def\J{\mathcal J}
\def\RR{\mathcal R}
\def\L{\mathcal L}
\def\mod{{\rm mod}\,}
\begin{document}

\title{Effects of semigroup properties on local embeddability}

\author{Dmitry Kudryavtsev\footnote{Department of Mathematics, University of York, Deramore Lane, York, YO10 5GH, United Kingdom. Email dmitry.kudryavtsev@york.ac.uk}}

\maketitle

\begin{abstract}

We investigate whether semigroups with a given property which are also locally embeddable into finite semigroups can be locally embedded into finite semigroups with the same property, obtaining a positive answer for completely simple and Clifford semigroups (similarly to group and inverse semigroup cases studied previously) and a negative answer for $\J$-trivial semigroups (similarly to cancellative semigroups). Additionally, we resolve the standing question on the differences between local embeddability in and local wrappability by finite structures, providing a novel construction of $\J$-trivial semigroups which satisfy the latter, but not the former.

finite semigroups, local embeddability into finite, $\J$-trivial semigroups

MSC Classification: 20M10, 20M15, 20M18

\end{abstract}

\section*{Acknowledgments}

The work was financially supported by the London Mathematical Society Early Career Fellowship.

The author is grateful to Prof Mark Kambites and Prof Victoria Gould for the fruitful discussions about semigroups in general and finitary conditions in particular.

\section*{Introduction}

Understanding the intrinsic structural differences between finite and infinite structures defined by a fixed set of algebraic properties is a broad and important area of mathematics. It includes venerable topics such as the resolution of the Burnside problem on the finiteness of the group in which every element has a finite order obtained by Golod and Shafarevich with further results for the shared exponent obtained by Adian and Novikov in 1960s (see \cite{Adi} and references therein), as well as more recent developments such as the investigation of the notion of soficity for groups (see early 2000s works of Gromov \cite{Gro} and Weiss \cite{Wei}) and semigroups (see 2010s works of Ceccherini-Silberstein and Coornaert \cite{CSC} and Kambites \cite{Kam}). 

Similarly to soficity, the concept at the crux of the present text, local embeddability into finite semigroups (LEF for short), is a well-established finitary condition (that is, one shared by all finite structures), which describes all structures internally similar to or approximable by the finite ones. It was initially examined in \cite{Ev} with relation to residual finiteness and word problems, reintroduced in \cite{GV} specifically for groups, studied for general structures in \cite{Bel} via model theory and topology, and recently brought into the class of semigroups in \cite{K23, K24}.

The aim of this paper is twofold: we investigate the interactions between LEF and classical semigroup properties, namely whether being LEF with an extra condition X is equivalent to being locally embeddable into the class of finite semigroups satisfying the condition X, and also expand on the local wrappability by finite semigroups (LWF for short), a generalisation of LEF introduced in \cite{K24}. This is reflected in the structure of the work. Section 1 covers the basics of LEF, LWF and important classical semigroup notions. Section 2 deals with the first goal, providing positive answers for Clifford semigroups and completely simple semigroups as well as a negative one for $\J$-trivial semigroups. Finally, in Section 3 we demonstrate that there exist non-LEF semigroups which are LWF, unlike the group case. Appendices A and B contain supplementary tables used in the $\J$-trivial example in Section 2.

\section{Preliminaries}

We begin with a definition of the main property studied in this paper.

\begin{definition}\label{def_lef}
A (semi)group $S$ is called {\em locally embeddable into the class of finite (semi)groups} (an {\em LEF} (semi)group  for short) if for every finite subset $H$ of $S$ there exist a finite (semi)group $F_H$ and an injective function $f_H: H \rightarrow F_H$ such that for all  $x,y \in H$ with $xy \in H$ we have $(xy)f_H = (xf_H)(yf_H)$.

We call $(F_H, f_H)$ an {\em approximating pair} for $H$.
\end{definition}

The following concept underlines that Definition \ref{def_lef} is local and can be failed at a specific ``point" within a semigroup.

\begin{definition}\label{def_nemb}
We say that a finite set $H$ with partially defined multiplication which satisfies associativity for any valid inputs is {\em non-embeddable} if there do not exist a finite (semi)group $F_H$ and an injective function $f_H: H \rightarrow F_H$ such that for all  $x,y \in H$ with $xy \in H$ we have $(xy)f_H = (xf_H)(yf_H)$.
\end{definition}

We can also immediately observe the basic substructure inheritance of being LEF.

\begin{proposition}\label{prop_sublef}\emph{\cite[Proposition 5.1]{K23}}
Let $S$ be an LEF (respectively, residually ﬁnite) semigroup and $T$ a subsemigroup of $S$. Then $T$ is an LEF (respectively, residually ﬁnite) semigroup.
\end{proposition}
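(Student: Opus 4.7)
The plan is to prove both parts by direct transfer of witnesses from $S$ to $T$, exploiting only that the multiplication on $T$ is inherited from that of $S$.

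For the LEF part, I would fix an arbitrary finite subset $H \subseteq T$ and, since $T$ is a subset of $S$, view $H$ as a finite subset of $S$ as well. Applying LEF of $S$ to $H$, I obtain a finite semigroup $F_H$ and an injection $f_H \colon H \to F_H$ which is multiplicative wherever defined. I then claim the same pair $(F_H, f_H)$ witnesses LEF for $H$ inside $T$. The only thing to check is that the defining condition in Definition \ref{def_lef} remains satisfied when products are interpreted in $T$: if $x, y \in H$ and $xy \in H$ with the product computed in $T$, then because $T$ is a subsemigroup this product coincides with $xy$ computed in $S$, so $(xy)f_H = (xf_H)(yf_H)$ by the choice of the pair in $S$. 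Since $H$ was arbitrary, $T$ is LEF.

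For the residually finite part, the argument is the parallel one at the level of homomorphisms. Given two distinct elements $x, y \in T$, I apply residual finiteness of $S$ to obtain a homomorphism $\varphi \colon S \to F$ into a finite semigroup $F$ with $\varphi(x) \neq \varphi(y)$. The restriction $\varphi|_T \colon T \to F$ is a homomorphism (again because multiplication on $T$ agrees with that on $S$), it maps into a finite semigroup, and it separates $x$ from $y$. Hence $T$ is residually finite.

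There is no real obstacle here: the content is entirely bookkeeping, and the one conceptual point is the trivial observation that subsemigroup multiplication is the restriction of ambient multiplication, so every approximating pair or separating homomorphism for $S$ descends to one for $T$ with no modification. Note also that injectivity of $f_H$ on $H$ is preserved automatically since $H \subseteq T \subseteq S$.
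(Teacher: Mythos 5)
Your argument is correct and is exactly the standard restriction argument: the paper itself quotes this result from \cite[Proposition 5.1]{K23} without reproving it, and the proof there proceeds in the same way, transferring the approximating pair (respectively, the separating homomorphism restricted to $T$) using the fact that subsemigroup multiplication is inherited from $S$. No gaps.
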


It has been shown previously that the free groups and semigroups are LEF, while the bicyclic monoid $B = \langle a,b \mid ab=1\rangle$ is not LEF and the set $\{1,a,b,ba\}$ with the partial multiplication inherited from $B$ is non-embeddable (see \cite[Examples 1.5 and 1.7]{K23} for the semigroup and monoid results).

Another property examined previously and inspired by \cite{GV} is as follows.

\begin{definition}\label{def_lwf}{\cite[Definition 3]{K24}}
We say that a (semi)group $S$ is {\em locally wrapped by the class of finite (semi)groups} (an {\em LWF} (semi)group for short) if for every finite subset $H$ of $S$ there exist a finite (semi)group $D_H$ and a function $d_H: D_H \rightarrow S$ such that $H \subseteq D_H d_H$ and for all $x', y' \in D_H$ with $x'd_H,y' d_H \in H$ we have $(x'y')d_H = (x'd_H)(y'd_H)$.
\end{definition}

\begin{proposition}\label{prop_lefislwf}\emph{\cite[Proposition 3]{K24}}
Let $S$ be an LEF (semi)group. Then it is LWF.
\end{proposition}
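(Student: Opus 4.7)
The plan is to build the LWF data by inverting the LEF data, with a preliminary enlargement of the input set to absorb stray products. Given a finite $H \subseteq S$, LEF supplies an injection $f_H : H \to F_H$ into a finite semigroup. The naïve attempt — take $D_H := F_H$ and $d_H$ to be a one-sided inverse of $f_H$ — founders precisely when $x, y \in H$ have $xy \notin H$, because $(xf_H)(yf_H) \in F_H$ is then unconstrained by the LEF condition and can coincide with $zf_H$ for some $z \in H$ with $z \neq xy$, whereupon the product condition would force $xy = z$.

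To fix this I would first enlarge $H$ to $H' := H \cup \{xy : x, y \in H\}$, still a finite subset of $S$, and apply LEF to $H'$ to obtain an approximating pair $(F_{H'}, f_{H'})$. Setting $D_H := F_{H'}$, I would define $d_H : D_H \to S$ by declaring $(zf_{H'})d_H := z$ for $z \in H'$ (well-defined by injectivity of $f_{H'}$) and $wd_H := s$ for $w \in D_H \setminus \Im(f_{H'})$, where $s$ is any fixed element of $S \setminus H$. Such an $s$ exists whenever $S$ is infinite, and the case of finite $S$ is trivial since one may then take $D_H := S$ with $d_H := \mathrm{id}$.

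The covering condition $H \subseteq H' \subseteq D_H d_H$ holds by construction. For the product condition, the choice $s \notin H$ ensures that $x'd_H, y'd_H \in H$ forces $x', y' \in \Im(f_{H'})$, so $x' = xf_{H'}$ and $y' = yf_{H'}$ with $x := x'd_H$ and $y := y'd_H$ both in $H$; since $xy \in H^2 \subseteq H'$, LEF applied to $H'$ gives $x'y' = (xy)f_{H'}$, and the definition of $d_H$ yields $(x'y')d_H = xy = (x'd_H)(y'd_H)$. The substantive obstacle is precisely the closure issue flagged in the first paragraph — products of elements of $H$ must lie inside the embedded set — and passing from $H$ to $H \cup H^2$ resolves it cleanly; the assignment of $d_H$ on $D_H \setminus \Im(f_{H'})$ is a minor bookkeeping point, handled by making sure those elements land outside $H$ so that the product condition for them is vacuous.
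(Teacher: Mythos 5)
Your construction is correct: enlarging $H$ to $H\cup H^2$ before applying LEF, inverting the injection on its image, and sending everything outside the image to a fixed $s\notin H$ (with the trivial finite-$S$ fallback) verifies both conditions of Definition~\ref{def_lwf} exactly as required. The present paper only quotes this statement from \cite[Proposition 3]{K24} without reproducing a proof, and your argument is the standard one underlying that reference, so there is nothing to add.
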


In the group case, the converse is also true, as established in \cite[Corollary 25]{K24}.

We also present several classical semigroup properties for later examination. Let us start with the following.

\begin{definition}\label{def_jtriv}
Let $S$ be a semigroup and $x,y$ elements of $S$.

By $S^1$ we understand either $S$ if it contains an identity or the extension of $S$ by an external identity otherwise. The pre-orders $\le_\L, \le_\RR, \le_\J$ on $S$ and the corresponding {\em Green's relations}, denoted $\L, \RR, \J$, are defined as follows:
\begin{itemize}
\item $x \le_\L y$ if $S^1  x\subseteq S^1 y$, or equivalently there exists $z \in S^1$ with $x = zy$;
\item $x \le_\RR y$ if $x S^1 \subseteq y S^1$, or equivalently there exists $z \in S^1$ with $x = yz$;
\item $x \le_\J y$ if $S^1  x S^1 \subseteq S^1 y S^1$, or equivalently there exist $z,u \in S^1$ with $x = uyz$;
\item $x \mathrel{\L} y$ if $x \le_\L y$ and $y \le_\L x$;
\item  $x \mathrel{\RR} y$ if $x \le_\RR y$ and $y \le_\RR x$;
\item $x \mathrel{\J} y$ if $x \le_\J y$ and $y \le_\J x$.
\end{itemize}

The semigroup $S$ is called $\J$-{\em trivial} if $s \mathrel{\J} t$ implies $s = t$ for any $s,t\in S$. Being {\em $\RR$-trivia}l and {\em $\L$-trivial} is defined similarly.
\end{definition}

Another point of interest is the family of semigroup classes described below. We begin with the most straightforward one.

\begin{definition}\label{def_compsimp}
A semigroup $S$ is called {\em completely simple} if 
\begin{itemize}
\item $S$ does not contain any non-trivial ideals, that is there is no set $A \subsetneq S$ such that $AS \subseteq A, SA \subseteq A$;
\item $S$ contains a minimal $\L$-class and a minimal $\RR$-class with regards to $\le_\L$ and $\le_\RR$ respectively.
\end{itemize}
\end{definition}

These semigroups can be characterised using another classical construction.

\begin{definition}\label{def_ReesM}
Let $G$ be a group, $I$ and $\Lambda$  sets and $P$ a matrix indexed by $\Lambda \times I$ with elements from $G$. The {\em Rees matrix semigroup} defined by this tuple, denoted $M(G; I, \Lambda; P)$, is equal to $I \times G \times \Lambda$ setwise, with the product defined as $$(i,g,\lambda) \cdot (j,h, \mu) = (i, g P_{\lambda j} h, \mu),$$
where $i, j \in I, \lambda, \mu \in \Lambda, g,h \in G$ and $P_{\lambda j} \in G$ is the element of $P$ at the position $\lambda, j$.
\end{definition}

It is well-known (see, for example, \cite[Theorem 3.3.1]{How95}), that every Rees matrix semigroup is completely simple, and for every completely simple semigroup there exists an isomorphic Rees matrix semigroup.

To understand the second class, we require an auxiliary definition.

\begin{definition}
Let $(E, \le)$ be a meet semilattice. Also, let $\{S_e\}_{e \in E}$ be a family of disjoint semigroups indexed by elements of $E$. Furthermore, let it be equipped with homomorphisms $\phi_{e_1,e_2}: S_{e_1}\rightarrow S_{e_2}$ for $e_1 \ge e_2$ in the natural partial order satisfying the following properties:

\begin{itemize}
\item $\phi_{e,e}$ is identity on $S_e$ for every $e \in E$;
\item $\phi_{e_1,e_2} \phi_{e_2,e_3} = \phi_{e_1,e_3}$ for all $e_1 \ge e_2 \ge e_3$. 
\end{itemize}

We say that the union $\bigsqcup\limits_{e\in E} S_e$ is a {\em semilattice of semigroups}, which is itself a semigroup with the operation $x \cdot y = (x \phi_{e_1,e \wedge e_2})(y \phi_{e_2,e_1 \wedge e_2})$ for $x \in S_{e_1}$, $y \in S_{e_2}$. 
\end{definition}

\begin{definition}
A semigroup $S$ is called {\em Clifford} if it is a semilattice of groups.
\end{definition}

While other characterisations of Clifford semigroups exist, for example via inverse semigroups, we require this particular approach to demonstrate our results.

The classes of completely simple and Clifford semigroups are contained within another important overarching class.

\begin{definition}
A semigroup $S$ is called {\em completely regular} if every element of $S$ belongs to a subgroup of $S$.
\end{definition}

These objects also possess a developed structural theory (see \cite{Pet}).

With the key concepts established, we can proceed with discussing the problems outlined in the introduction. 

\section{LEF and semigroup properties}

Let X denote some kind of semigroup property (for instance, being a group, cancellative, completely simple or $\J$-trivial). We can consider a semigroup $S$ which is X and simultaneously approximable by finite semigroups, and ask a natural question on whether $S$ is approximable by finite semigroups which are also X? The following definition turns this idea formal for the case of local embeddability.

\begin{definition}
Let $\mathcal{K}$ be a class of semigroups and $S$ be a semigroup in $\mathcal{K}$. It is called {\em locally embeddable into the class of finite $\mathcal{K}$-semigroups} if for every finite subset $H$ of $S$ there exist a finite semigroup $F_H \in \mathcal{K}$ and an injective function $f_H: H \rightarrow F_H$ such that for all  $x,y \in H$ with $xy \in H$ we have $(xy)f_H = (xf_H)(yf_H)$.

As with general LEF, we call $(F_H, f_H)$ an {\em approximating pair} for $H$.
\end{definition}

This gives us a general type of mathematical problem: if $\mathcal{K}$ is a class of semigroups and $S$ is an LEF semigroup which is in $\mathcal{K}$, is it true that $S$ is  locally embeddable into the class of finite $\mathcal{K}$-semigroups? The previous studies into LEF already provide us some examples when the answer is positive.

\begin{proposition}\label{prop_lefgr}\emph{\cite[Proposition 2.2]{K23}}
A group is an LEF semigroup if and only if it is an LEF group.
\end{proposition}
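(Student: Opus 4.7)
The forward implication is essentially trivial: if $G$ is an LEF group, then for any finite $H \subseteq G$ we get an approximating pair $(F_H, f_H)$ with $F_H$ a finite group; since every finite group is a finite semigroup, this same pair witnesses LEF in the semigroup sense. So the content of the proposition is the reverse direction, and my plan is to show that any LEF-semigroup approximation of a sufficiently enlarged finite subset can be replaced by a group approximation, by landing inside a maximal subgroup of the approximating finite semigroup.

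Given a finite subset $H \subseteq G$, I would pass to the enlarged finite set
\[
H' = H \cup \{1_G\} \cup \{h^{-1} : h \in H\}.
\]
Applying the semigroup-LEF hypothesis to $H'$ yields a finite semigroup $F_{H'}$ and an injection $f_{H'}: H' \to F_{H'}$ respecting all defined products in $H'$. Set $e := f_{H'}(1_G)$. From $1_G \cdot 1_G = 1_G \in H'$ we obtain $e^2 = e$, so $e$ is idempotent. For every $h \in H$, the identities $1_G \cdot h = h = h \cdot 1_G$ and $h \cdot h^{-1} = 1_G = h^{-1} \cdot h$ all have their inputs and outputs inside $H'$, so they transfer under $f_{H'}$ to
\[
e \cdot f_{H'}(h) = f_{H'}(h) = f_{H'}(h) \cdot e, \qquad f_{H'}(h) \cdot f_{H'}(h^{-1}) = e = f_{H'}(h^{-1}) \cdot f_{H'}(h).
\]

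Hence each $f_{H'}(h)$ lies in the maximal subgroup $H_e$ of $F_{H'}$ with identity $e$, a finite group. Restricting $f_{H'}$ to $H$ produces an injection $f_H : H \to H_e$; since $H_e$ is a subsemigroup of $F_{H'}$, the products witnessed in $F_{H'}$ for pairs $x,y \in H$ with $xy \in H$ agree with those computed in $H_e$, and $f_H$ is the desired group approximating pair. The only delicate point I anticipate is bookkeeping around the definition of $H'$: one must enlarge to include $1_G$ and the inverses of the original elements so that the idempotency of $e$, the left/right neutrality of $e$ on each $f_{H'}(h)$, and the inverse relation are all supplied by products actually lying inside $H'$. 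Once this is set up correctly, identification of the maximal subgroup $H_e$ is automatic from classical semigroup theory and no further work is needed.
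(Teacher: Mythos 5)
Your argument is correct. Note that the paper does not prove this statement itself but imports it from \cite{K23} (Proposition 2.2), so there is no in-paper proof to compare with; your route --- adjoin $1_G$ and the inverses of the elements of $H$, observe that $e = 1_G f_{H'}$ is idempotent and acts as a two-sided identity on each $f_{H'}(h)$, that $f_{H'}(h)f_{H'}(h^{-1}) = f_{H'}(h^{-1})f_{H'}(h) = e$, and conclude that every $f_{H'}(h)$ lies in the maximal subgroup $H_e$, a finite group closed under the ambient multiplication --- is the natural one, and every product you invoke is indeed witnessed inside $H'$ as required by Definition \ref{def_lef}. The only point worth making explicit is why $f_{H'}(h) \in H_e$: either note that $e f_{H'}(h^{-1}) = f_{H'}(h^{-1}) e = f_{H'}(h^{-1})$ also transfers (since $1_G \cdot h^{-1} = h^{-1}$ has its inputs and output in $H'$), or replace $f_{H'}(h^{-1})$ by $e f_{H'}(h^{-1}) e$; in either case $f_{H'}(h)$ is a unit of the local monoid $e F_{H'} e$, whose group of units is exactly $H_e$, so the membership claim is justified.
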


\begin{definition}\label{def_ilef}{\cite[Definition 7]{K24}}
A (semi)group $S$ is called {\em locally embeddable into the class of finite inverse semigroups} (an {\em iLEF} semigroup  for short) if for every finite subset $H$ of $S$ there exist a finite (semi)group $F^{(i)}_H$ and an injective function $f^{(i)}_H: H \rightarrow F^{(i)}_H$ such that for all  $x,y \in H$ with $xy \in H$ we have $(xy)f^{(i)}_H = (xf^{(i)}_H)(yf^{(i)}_H)$.
\end{definition}

\begin{proposition}\label{prop_lefgr}\emph{\cite[Proposition 18]{K24}}
An inverse semigroup is an LEF semigroup if and only if it is an iLEF semigroup.
\end{proposition}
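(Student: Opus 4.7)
The plan begins with the easy direction: since every finite inverse semigroup is in particular a finite semigroup, the same approximating pair that witnesses iLEF for a finite $H \subseteq S$ also witnesses LEF, so iLEF implies LEF is immediate. The content is in the converse.

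For the forward implication, suppose $S$ is an inverse LEF semigroup and fix a finite $H \subseteq S$. The strategy is to enlarge $H$ to a finite set $H'$ encoding enough of the local inverse-semigroup structure of $S$ around $H$, and then extract the desired finite inverse approximation from a standard LEF approximating pair for $H'$. A natural choice is
\[
H' = H \cup H^{-1} \cup E,
\]
where $E$ is the subsemilattice of $S$ generated by the idempotents $xx^{-1}$ and $x^{-1}x$ for $x \in H$. Since the idempotents of any inverse semigroup commute, $E$ is a finitely generated band and therefore finite, so $H'$ is finite. Applying the LEF property of $S$ to $H'$ yields a finite semigroup $F$ and an injection $f : H' \to F$ preserving every product whose factors and value all lie in $H'$. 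In particular, the relations $xx^{-1}x = x$, $x^{-1}xx^{-1} = x^{-1}$, $e^2 = e$ and $ee' = e'e$ for $x \in H$ and $e, e' \in E$ all transport to the corresponding relations among the images under $f$.

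The candidate finite inverse semigroup is then the subsemigroup $T = \langle f(H') \rangle$ of $F$. By construction, each generator $f(x)$ is regular in $T$ with $f(x^{-1})$ as a semigroup inverse, and the generating idempotents $f(E)$ commute pairwise. To conclude I would invoke the standard characterisation that a semigroup is inverse if and only if it is regular and its idempotents commute, applied to $T$. This mirrors the group argument behind Proposition~\ref{prop_lefgr}, where the analogous construction yields a subsemigroup that is forced to be a group by the LEF-transported relations $xx^{-1} = 1 = x^{-1}x$ and $1\cdot x = x\cdot 1 = x$.

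The main obstacle is the propagation of regularity and idempotent commutativity from the generators to the whole of $T$: a product of regular elements need not be regular in general, and $T$ may carry idempotents outside $f(E)$ that fail to commute with one another. The plan to overcome this is to iteratively enlarge $H'$, adding further products of elements of $H \cup H^{-1}$ together with the idempotents and inverses they produce in $S$, using the fact that any finite subset of idempotents of $S$ still generates a finite subsemilattice, until the LEF-transported relations force every idempotent of $T$ to agree with some $f(e)$ with $e \in E$ and every element of $T$ to admit a regular inverse expressible from the generators. Should this direct enlargement prove unwieldy, a fallback is to pass to the quotient of $T$ by its least inverse-semigroup congruence and then verify, for $H'$ taken large enough, that this quotient remains injective on $f(H)$ — the delicate point of that alternative being exactly this injectivity check.
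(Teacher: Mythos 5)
The paper does not actually prove this statement: it is quoted as \cite[Proposition 18]{K24}, so there is no internal proof to compare against, and your proposal has to stand on its own. The easy direction and the preparatory step are fine: enlarging $H$ to $H' = H \cup H^{-1} \cup E$ with $E$ the (finite, since idempotents commute) semilattice generated by $xx^{-1}, x^{-1}x$, and transporting the relations $xx^{-1}x = x$, $e^2 = e$, $ee' = e'e$ through an LEF approximation is correct, because all intermediate products stay inside $H'$.

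The gap is the step you yourself flag and then do not close: there is no reason why $T = \langle f(H')\rangle$ should be inverse, and your two remedies do not repair this. First, enlarging $H'$ further cannot help in principle, because the LEF definition gives you no control over the finite semigroup $F$ outside the image of $f$; the subsemigroup generated by the images can always contain non-regular elements and non-commuting idempotents that are not images of anything, no matter how much of $S$ you encode. The paper's own Malcev example makes exactly this point for cancellativity: the approximating finite semigroups exist, but nothing forces the generated subsemigroup to inherit the property, and indeed there it provably cannot. Whether the ``generate and hope'' strategy works is precisely the content of the proposition, so it cannot be assumed. Second, the fallback via ``the least inverse-semigroup congruence'' of $T$ is not available: inverse semigroups are not a variety of plain semigroups (they are not closed under subsemigroups), so a smallest congruence with inverse quotient need not exist, and even for some maximal inverse quotient the injectivity of the composite on $f(H)$ is exactly the unresolved crux, as you note. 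A working argument has to manufacture the finite inverse semigroup directly rather than extract it from $F$ as a subsemigroup --- for instance, in the spirit of the Vagner--Preston representation, one uses the transported relations to show that right translation by each $f(h)$ restricted to a suitable subset of $F$ is a partial injection, and maps $H$ into the finite symmetric inverse monoid on $F$, then verifies injectivity and the multiplicative property there; this is the kind of construction the cited result relies on, and it is the ingredient missing from your plan.
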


However, there are also cases when the answer is negative.

\begin{example}
Consider a famous construction by A. Malcev, the cancellative semigroup $C = Sg \langle a,b,c,d,x,y,u,v| ax = by, cx = dy, au =bv\rangle$ which is not embeddable (not locally, but generally) into groups.

Note that $C$ is LEF. In order to see this, choose an arbitrary finite subset $H$ of $C$. Since for any given element of $C$ the words in $\{a,b,c,d,x,y,u,v\}$ representing it must have the same lengths due to the fact that all the defining relations identify terms of equal length, there exists $n$ greater than the length of representatives for any $h \in H$. The set $I \subseteq C$ of elements represented by words in $\{a,b,c,d,x,y,u,v\}$ of length $n$ or above forms an ideal, the quotient semigroup $C /I$ is finite as the alphabet is finite, and the natural homomorphism $\pi: C \rightarrow C/I$ is injective on $H$, meaning that $(C/I, \pi|_H)$ is an approximating pair for $H$.

However, $C$ is not locally embeddable into finite cancellative semigroup (which are the same as finite groups). Consider the finite subset $\{a,b,c,d,x,y,u,v,ax,cx,au,cu,dv\}$ and assume that $(F, f)$ is an approximating pair for it, where $F$ is a group. Since we have $(af)(xf) = (bf)(yf), (cf)(xf) = (df)(yf), (af)(uf) =(bf)(vf)$, we also have $(df)^{-1} (cf) = (yf)(xf)^{-1} =  (bf)^{-1} (af) = (vf)(uf)^{-1} $. However, from  $(df)^{-1} (cf)  = (vf)(uf)^{-1} $ it follows that  $(cf) (uf)  = (df)(vf)$, which gives us $(cu) f = (dv) f$ by the multiplicative property, which contradicts the injectivity of $f$ on the chosen subset as $cu \neq dv$ in $C$.
\end{example}

Now we can turn to the analysis for the classes of completely simple, Clifford and $\J$-trivial semigroups. The first among these requires the following additional statements.

\begin{lemma}\label{lem_compsimp}
Let $M(G; I, \Lambda; P)$ be a Rees matrix semigroup and assume that $G$ is LEF. Then for any finite subset $H'$ of $M(G; I, \Lambda; P)$ there exists an approximating pair $(F, f)$ such that $F$ is a Rees matrix semigroup. 
\end{lemma}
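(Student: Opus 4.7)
The plan is to lift an LEF approximation of the group $G$ to a Rees matrix approximation of $M(G; I, \Lambda; P)$, by restricting the index sets to those appearing in $H'$ and replacing the sandwich matrix entries by their images under the approximating map. The main work is in choosing a finite subset of $G$ large enough that the LEF approximation for $G$ is guaranteed to respect all the products that appear when we multiply elements of $H'$.

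First I would isolate the finitely many indices and group entries relevant to $H'$. Let $I' = \{i \in I : \exists g, \lambda \text{ with } (i,g,\lambda) \in H'\}$ and define $\Lambda'$ analogously; both are finite. Let $G' \subseteq G$ be the finite set of middle coordinates appearing in $H'$. Then I would form the finite subset
\[
K = G' \cup \{P_{\lambda j} : \lambda \in \Lambda', j \in I'\} \cup \{gP_{\lambda j}, P_{\lambda j}h, gP_{\lambda j}h : (i,g,\lambda),(j,h,\mu) \in H', \lambda \in \Lambda', j \in I'\} \subseteq G.
\]
Because $G$ is LEF, there is an approximating pair $(F_G, f_G)$ for $K$, so $f_G$ is injective on $K$ and respects any product of two elements of $K$ whose result also lies in $K$.

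Next I would build the candidate finite Rees matrix semigroup $F = M(F_G; I', \Lambda'; P')$, where $P'$ is the $\Lambda' \times I'$ matrix with $P'_{\lambda j} = P_{\lambda j} f_G \in F_G$. I then define $f : H' \to F$ by $(i,g,\lambda) f = (i, g f_G, \lambda)$. Injectivity of $f$ on $H'$ follows from injectivity of $f_G$ on $G' \subseteq K$: two triples have the same image iff they share their first and third coordinates and their middle coordinates have the same image under $f_G$.

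To verify the homomorphism condition, suppose $(i,g,\lambda),(j,h,\mu) \in H'$ with product $(i, gP_{\lambda j}h, \mu) \in H'$. On the one hand
\[
\bigl((i,g,\lambda)(j,h,\mu)\bigr) f = \bigl(i, (gP_{\lambda j}h) f_G, \mu\bigr),
\]
while on the other hand, using the multiplication in $F$ and the definition of $P'$,
\[
\bigl((i,g,\lambda)f\bigr)\bigl((j,h,\mu)f\bigr) = \bigl(i,\, (gf_G)(P_{\lambda j} f_G)(h f_G),\, \mu\bigr).
\]
Equality of middle coordinates amounts to $(gP_{\lambda j}h)f_G = (gf_G)(P_{\lambda j}f_G)(hf_G)$, which I obtain by applying the LEF condition for $K$ twice: first to $g \cdot P_{\lambda j} = gP_{\lambda j}$ (all three lie in $K$) and then to $(gP_{\lambda j}) \cdot h = gP_{\lambda j}h$ (again all three in $K$). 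The main obstacle is precisely this bookkeeping step: one must make sure in advance to have included every intermediate group product that the Rees matrix multiplication will force the approximating map $f_G$ to evaluate correctly, which is why $K$ needs the partial products $gP_{\lambda j}$ as well as the full sandwich products.
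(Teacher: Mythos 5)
Your proposal is correct and follows essentially the same route as the paper: restrict the index sets to those occurring in $H'$, approximate a finite subset of $G$ that contains the relevant sandwich entries and partial products (the paper simply uses $X\cup Y\cup(X\cup Y)^2\cup(X\cup Y)^3$ where your $K$ is a tailored subset of this), push $f_G$ onto the sandwich matrix, and verify injectivity and the product condition coordinatewise. The only point to add is that, since $G$ is a group, its LEF approximation can be taken inside a finite \emph{group} (a group that is LEF as a semigroup is an LEF group), which is needed for $M(F_G; I', \Lambda'; P')$ to be a Rees matrix semigroup in the sense of the paper's definition.
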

\begin{proof}
Since the set $H'$ is finite, the sets $J = \{ i \in I | \exists (i,g,\lambda) \in H' \}$, $\Sigma = \{\lambda \in \Lambda | \exists (i,g,\lambda) \in H' \}$ and $X = \{g \in G | \exists (i,g,\lambda) \in H' \}$ are also finite. Furthermore, $Y = \{P_{\lambda i} | \lambda \in \Sigma, i \in J\}$ is also finite.

Let $F'$ be a finite group and $f'$ be a map from $X$ to $F'$ such that $(F',f')$ is an approximating pair for $X \cup Y \cup (X \cup Y)^2 \cup (X \cup Y)^3$. We claim that $F = M(F'; J, \Sigma; P')$ together with the map $f: H' \rightarrow F$ defined by $(i,g,\lambda) f  = (i, g f' ,\lambda)$ is an approximating pair for $H'$, where $P'$ is the matrix obtained by taking $\Sigma, J$-indexed submatrix from $P$ and applying $f'$ to all of the entries.

First, note that $F$ is finite as all the components are finite and setwise $F$ is equal to $J \times F' \times \Sigma$. Second, the map $f$ is injective on $H'$ because distinct elements of $H'$ differ either by index on the left or right, or by the group element, and these differences are explicitly preserved by the construction as $f'$ is also injective on $X \subseteq X \cup Y$. Finally, if $(i,g,\lambda), (j, h, \mu)$ are elements of $H'$, we have 
\begin{align*}
((i,g,\lambda) \cdot (j, h, \mu))f  & =  (i,g P_{\lambda j} h,\mu) f = (i,(g P_{\lambda j} h) f',\mu) = (i,gf' P_{\lambda j} f' h f',\mu)  \\
 & =  (i,g f',\lambda) \odot (j, h f', \mu) = ((i,g ,\lambda)f ) \odot ((j, h , \mu) f),
\end{align*}
where $\cdot$ is the product in $M(G; I, \Lambda; P)$ and $\odot$ is the product in $F$.
\end{proof}

\begin{proposition}\label{prop_compsimp1}
Let $S$ be a completely simple semigroup and $M(G; I, \Lambda; P)$ a Rees matrix semigroup isomorphic to $S$. Then $S$ is LEF if and only if $G$ is LEF.
\end{proposition}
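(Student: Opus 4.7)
The statement is a biconditional, so the plan splits into two directions, one of which is essentially handled by the preceding lemma.

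For the $(\Leftarrow)$ direction, the approach is to apply Lemma \ref{lem_compsimp} directly. Given any finite $H' \subseteq M(G; I, \Lambda; P)$, the lemma guarantees an approximating pair whose first component is a Rees matrix semigroup (hence in particular a finite semigroup), provided we can first produce a finite approximating pair for the finite subset $X \cup Y \cup (X \cup Y)^2 \cup (X \cup Y)^3 \subseteq G$. Since $G$ is assumed LEF, such an approximating pair exists, and the lemma yields an approximating pair for $H'$. As $H'$ was arbitrary, $S$ is LEF.

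For the $(\Rightarrow)$ direction, the strategy is to locate a subsemigroup of $M(G; I, \Lambda; P)$ isomorphic to $G$ and invoke hereditariness of LEF plus the group/semigroup equivalence. Fix any $i_0 \in I$, $\lambda_0 \in \Lambda$ and consider the $\mathcal H$-class $T = \{i_0\} \times G \times \{\lambda_0\}$. Using the multiplication $(i_0, g, \lambda_0)(i_0, h, \lambda_0) = (i_0,\, g\, P_{\lambda_0 i_0}\, h,\, \lambda_0)$, one checks that the map $g \mapsto (i_0,\, g\, P_{\lambda_0 i_0}^{-1},\, \lambda_0)$ is a semigroup embedding of $G$ into $M(G; I, \Lambda; P) \cong S$. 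By Proposition \ref{prop_sublef}, the image, being a subsemigroup of an LEF semigroup, is itself an LEF semigroup; hence $G$ is LEF as a semigroup. Proposition \ref{prop_lefgr} (the group version) then upgrades this to $G$ being an LEF group.

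I do not foresee a major obstacle. The substantive computational content, verifying that the Rees structure pushes an approximating pair from $G$ up to $M(G; I, \Lambda; P)$, is already packaged in Lemma \ref{lem_compsimp}; the only delicate point there was that the pair for $G$ must approximate not just the group elements appearing in $H'$ but also the relevant sandwich matrix entries $P_{\lambda j}$ and products up to length three, which is why the lemma was stated with that enlarged subset. The $(\Rightarrow)$ direction is routine once the subgroup copy of $G$ inside the Rees matrix form is spotted; the small subtlety is remembering to twist by $P_{\lambda_0 i_0}^{-1}$ so that the resulting map is genuinely a homomorphism rather than merely a bijection onto an $\mathcal H$-class.
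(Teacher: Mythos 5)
Your proposal is correct and matches the paper's proof in both directions: the converse is delegated to Lemma \ref{lem_compsimp} exactly as in the paper, and the forward direction locates the twisted copy of $G$ inside an $\mathcal{H}$-class (the paper twists by $P_{\lambda i}^{-1}$ on the left, you on the right; both give a homomorphism) and applies Proposition \ref{prop_sublef}. Your explicit appeal to Proposition \ref{prop_lefgr} to pass from LEF semigroup to LEF group is a harmless refinement the paper leaves implicit.
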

\begin{proof}
If $S$ is LEF, then  $M(G; I, \Lambda; P)$ is also LEF. The latter contains a subgroup isomorphic to $G$, for instance, $G'$ which is formed by all elements $(i,g,\lambda)$ with fixed $i$ and $\lambda$ with the isomorphism $\phi: G \rightarrow G'$ defined by $g \phi = (i, P_{\lambda i}^{-1} g, \lambda)$. Thus, by Proposition \ref{prop_sublef} the group $G$ is also LEF.

Assume that $G$ is LEF. Consider a finite subset $H$ of $S$. To prove that there exists an approximating pair for $H$ it is enough to demonstrate that there exists an approximating pair for the corresponding set $H'$ in $M(G; I, \Lambda; P)$. This holds by Lemma \ref{lem_compsimp}, thus $S$ is also LEF.
\end{proof}

\begin{proposition}
A completely simple semigroup $S$ is LEF if and only if it is locally embeddable into the class of finite completely simple semigroups. 
\end{proposition}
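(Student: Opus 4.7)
The plan is to combine the two results just established: Proposition \ref{prop_compsimp1} reduces the LEF property of a completely simple semigroup $S$ to the LEF property of its structure group $G$, and Lemma \ref{lem_compsimp} produces approximating pairs whose first component is a Rees matrix semigroup (hence completely simple) whenever $G$ is LEF. Neither direction should involve any further heavy machinery.

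First I would dispatch the easy direction: if $S$ is locally embeddable into the class of finite completely simple semigroups, then in particular each approximating pair $(F_H, f_H)$ uses a finite semigroup, so $S$ is LEF by definition. This requires no additional argument beyond citing the definitions.

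For the nontrivial direction, I would fix an isomorphism $S \cong M(G; I, \Lambda; P)$ given by the Rees theorem and proceed as follows. Suppose $S$ is LEF. By Proposition \ref{prop_compsimp1}, the structure group $G$ is LEF. Given a finite subset $H \subseteq S$, transport it to the corresponding finite subset $H' \subseteq M(G; I, \Lambda; P)$ via the isomorphism; Lemma \ref{lem_compsimp} then provides an approximating pair $(F, f)$ with $F = M(F'; J, \Sigma; P')$ a finite Rees matrix semigroup. Since every Rees matrix semigroup is completely simple, $F$ lies in the required class, and pulling $f$ back through the isomorphism gives an approximating pair for $H$ whose first component is a finite completely simple semigroup. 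This establishes local embeddability into the class of finite completely simple semigroups.

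There is essentially no obstacle here — the work has already been carried out in Lemma \ref{lem_compsimp} and Proposition \ref{prop_compsimp1}. The only point deserving a line of care is the observation that the approximating semigroup produced by Lemma \ref{lem_compsimp} is automatically completely simple because it is presented as a Rees matrix semigroup, so no extra verification (for instance, of simplicity or of the existence of minimal $\L$- and $\RR$-classes) is needed.
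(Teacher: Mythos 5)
Your proposal is correct and follows exactly the paper's argument: the forward direction combines Proposition \ref{prop_compsimp1} (to get that $G$ is LEF) with Lemma \ref{lem_compsimp} (to produce a Rees matrix, hence completely simple, approximating semigroup), transporting finite subsets through the Rees isomorphism, while the converse direction is immediate from the definitions. No differences of substance from the paper's proof.
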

\begin{proof}
If $S$ is LEF, then we can consider a Rees matrix semigroup $M(G; I, \Lambda; P)$ isomorphic to $S$. We have that every finite subset $H$ of $S$ can be approximated by the same semigroup as the corresponding subset $H'$ of $M(G; I, \Lambda; P)$, which we can choose to be completely simple by Lemma \ref{lem_compsimp} as $G$ must be also LEF by Proposition \ref{prop_compsimp1}.

If $S$ is  locally embeddable into finite completely simple semigroups, it is evidently LEF.
\end{proof}

We can also demonstrate that the answer to the general question is positive for Clifford semigroups. In order to do this, we delve into the structure of lattices of semigroups.

\begin{lemma}
Let $S$ and $T$ be LEF semigroups and $\phi$ a homomorphism from $S$ to $T$. Then for any finite non-empty $H \subseteq S$ there exist approximating pairs $(F, f)$ for $H$ and $(F', f')$ for $H \phi$ such that there is a homomorphism $\psi$ from $F$ to $F'$ which satisfies $(hf)\psi = (h\phi) f'$ for all $h \in H$.
\end{lemma}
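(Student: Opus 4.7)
The plan is to build $F$ via a direct product construction that carries information from both sides of $\phi$ simultaneously. First, invoke LEF of $S$ on $H$ to obtain some approximating pair $(F_0, f_0)$ for $H$, and invoke LEF of $T$ on the finite set $H\phi \subseteq T$ to obtain an approximating pair $(F', f')$, which will be returned unchanged as the output on the $T$-side. Then set $F := F_0 \times F'$ with coordinatewise multiplication, define $f : H \to F$ by $hf = (hf_0,\; (h\phi)f')$, and let $\psi : F \to F'$ be the projection onto the second coordinate.

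Next I would check the three required properties. Injectivity of $f$ is immediate from injectivity of $f_0$, since distinct elements of $H$ already differ in the first coordinate of their $f$-images. For the partial multiplicative condition, fix $h_1, h_2 \in H$ with $h_1 h_2 \in H$: the first coordinate is handled directly by the multiplicativity of $f_0$; for the second, $h_1\phi$ and $h_2\phi$ lie in $H\phi$ by construction, and $(h_1\phi)(h_2\phi) = (h_1 h_2)\phi$ also lies in $H\phi$ because $h_1 h_2 \in H$, so partial multiplicativity of $f'$ on $H\phi$ applies and yields $((h_1 h_2)\phi)f' = ((h_1\phi)f')((h_2\phi)f')$. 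Finally, $\psi$ is a homomorphism as a projection from a direct product, and the identity $(hf)\psi = (h\phi)f'$ is immediate from the definition of $f$.

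There is essentially no deep obstacle here, since the direct product of finite semigroups is finite and supplies the projection homomorphism for free. The only point requiring care is correctly identifying which approximating pair is to be preserved (the one for $H\phi$, as it is constrained to be the actual output on the $T$-side) and which is to be augmented (the one for $H$, which we are free to enlarge by pairing with the $T$-side data), so that the commuting condition $(hf)\psi = (h\phi)f'$ points in the intended direction.
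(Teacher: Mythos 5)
Your construction is exactly the one in the paper: take an approximating pair $(F_0,f_0)$ for $H$ and $(F',f')$ for $H\phi$, form $F=F_0\times F'$ with $f = f_0 \times (\phi f')$, and let $\psi$ be the projection onto the second factor, verifying partial multiplicativity coordinatewise. The argument is correct and matches the paper's proof essentially verbatim.
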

\begin{proof}
Let $(F_0, f_0)$ be an arbitrary approximating pair for $H$ and $(F', f')$ an arbitrary approximating pair for $H \phi$. Set $F = F_0 \times F'$ and $f = f_0 \times (\phi f')$, and set $\psi$ to be the projection from $F$ onto the second term of the direct product.

Note that $(F, f)$ is an approximating pair for $H$. To see this, consider $x,y \in H$ such that $xy \in H$ as well. We have 
\begin{align*} (x f) (y f) = & (x f_0 , x  (\phi f') ) \cdot  (y f_0 , y  (\phi f') ) = ((xf_0 )(yf_0), (x  (\phi f') )(y  (\phi f')) = ((xy)f_0, (x\phi \cdot y\phi)f')  \\ = & ((xy)f_0, (x y\phi)f') = (xy) f.
\end{align*}

Furthermore, from the construction we have $(hf)\psi = (h\phi) f'$ for all $h \in H$.
\end{proof}

This technique can be extended for semilattices of semigroups.

\begin{lemma}\label{lem_semlef}
Let $(E, \le)$ be a meet semilattice. Also, let $S = \bigsqcup\limits_{e\in E} S_e$ be a semilattice of LEF semigroups equipped with homomorphisms $\phi_{e_1,e_2}: S_{e_1}\rightarrow S_{e_2}$ for $e_1 \ge e_2$ and $H$ a finite non-empty subset of $S$. Denote by $E'$ the finite subsemilattice of $E$ generated by the finite set consisting of $e$ such that $S_e \cap H \neq \emptyset$. Furthermore, let $K$ be $$\bigcup\limits_{e_1, e_2 \in E', e_1 \ge e_2}  (H \cap S_{e_1})\phi_{e_1,e_2},$$

and let $(T_e, t_e)$ be arbitrary approximating pairs for $K \cap S_e$ for every $e \in E'$.

Finally, let $F_e = \prod\limits_{e' \in E', e' \le e} T_{e'}$ and $f_e$ to be the restriction of $f'_e = \prod\limits_{e' \in E', e' \le e} \phi_{e,e'} t_{e'} $ on $H \cap S_e$. For $e_1, e_2 \in E'$ with $e_1 \le e_2$ define $\psi_{e_1,e_2}: F_{e_1}\rightarrow F_{e_2}$ to be the natural projection from the former to the latter.

Then $F = \bigsqcup\limits_{e\in E'} F_e$ is a semilattice of finite semigroups and together with the map $f = \bigsqcup\limits_{e\in E'} f_e$ it forms an approximating pair for $H$.

\end{lemma}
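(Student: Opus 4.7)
The plan is to verify three items in turn: (i) that $F = \bigsqcup_{e \in E'} F_e$ together with the natural projection maps between the $F_e$'s is a semilattice of finite semigroups, (ii) that the map $f$ is injective on $H$, and (iii) that $f$ respects the partially defined multiplication inherited from $S$. Item (i) is essentially bookkeeping: each $F_e$ is a finite semigroup as a finite product of the finite semigroups $T_{e'}$ (since $E'$ is finite by construction), and the projection maps are clearly semigroup homomorphisms whose compositions and identities on each $F_e$ follow directly from the universal property of products, so the two defining axioms of a semilattice of semigroups are satisfied.

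For item (ii) I would first observe that elements of $H$ lying in different components $S_e$ get sent to different components $F_e$ of the disjoint union $F$, so collisions can only happen between $h_1, h_2 \in H \cap S_e$ for a common $e \in E'$. In that case, inspecting the $e$-th coordinate of $h_i f_e$ yields $h_i \phi_{e,e} t_e = h_i t_e$, using that $\phi_{e,e}$ is the identity on $S_e$. Hence $h_1 f = h_2 f$ forces $h_1 t_e = h_2 t_e$, and since $h_1, h_2 \in H \cap S_e \subseteq K \cap S_e$, the injectivity of $t_e$ on $K \cap S_e$ as part of its approximating-pair data gives $h_1 = h_2$.

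For item (iii), take $x \in H \cap S_{e_1}$ and $y \in H \cap S_{e_2}$ with $xy \in H$, so $xy \in H \cap S_{e_1 \wedge e_2}$. For each $e' \in E'$ with $e' \le e_1 \wedge e_2$ I would use the semilattice-of-semigroups multiplication formula together with the compatibility $\phi_{e_i, e_1 \wedge e_2}\phi_{e_1 \wedge e_2, e'} = \phi_{e_i, e'}$ to derive, inside $S_{e'}$,
\begin{equation*}
(xy)\,\phi_{e_1 \wedge e_2, e'} \;=\; (x\,\phi_{e_1, e'})\,(y\,\phi_{e_2, e'}).
\end{equation*}
The essential point is that all three elements appearing here lie in $K \cap S_{e'}$ by the very definition of $K$, so the approximating-pair property of $(T_{e'}, t_{e'})$ applies and delivers $\bigl((xy)\phi_{e_1\wedge e_2,e'}\bigr) t_{e'} = (x\phi_{e_1,e'}t_{e'})(y\phi_{e_2,e'}t_{e'})$. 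Collating these equalities over all admissible $e'$ shows that the coordinatewise product in $F_{e_1 \wedge e_2}$ of the projections of $xf$ and $yf$ is exactly $(xy) f_{e_1 \wedge e_2} = (xy) f$, which is precisely the value of $(xf)(yf)$ under the semilattice multiplication in $F$.

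The main obstacle, as is typical for constructions of this shape, is arranging $K$ to be broad enough that a single family of approximating pairs $(T_{e'}, t_{e'})$ simultaneously handles every product of $\phi$-images that can arise; this is exactly why $K$ is taken to be the union of $(H \cap S_{e_1})\phi_{e_1,e_2}$ over all pairs $e_1 \ge e_2$ in $E'$ rather than something narrower, and why the fortunate identity $(x\phi_{e_1,e'})(y\phi_{e_2,e'}) = (xy)\phi_{e_1\wedge e_2,e'}$ places the product back into $K \cap S_{e'}$. Once this packaging is in place, the rest of the verification is a routine componentwise check.
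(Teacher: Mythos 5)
Your proposal is correct and follows essentially the same route as the paper's proof: verify the semilattice-of-semigroups axioms for the projections, get injectivity from the $T_e$-coordinate via $\phi_{e,e}$ being the identity, and reduce the multiplicative property to the approximating-pair property of each $(T_{e'},t_{e'})$ on elements of $K\cap S_{e'}$, using $\phi_{e_1,e_2}\phi_{e_2,e_3}=\phi_{e_1,e_3}$. Your coordinatewise check of the key identity $\bigl((xy)\phi_{e_1\wedge e_2,e'}\bigr)t_{e'} = (x\phi_{e_1,e'}t_{e'})(y\phi_{e_2,e'}t_{e'})$ just spells out explicitly the step the paper performs through the maps $f'_e$ and the projections $\psi$.
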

\begin{proof}
To start, we need to check that $F$ is a semilattice of semigroups, that is 

\begin{itemize}
\item $\psi_{e,e}$ is identity on $F_e$ for every $e \in E'$;
\item $\psi_{e_1,e_2} \psi_{e_2,e_3} = \psi_{e_1,e_3}$  for all $e_1 \ge e_2 \ge e_3$.
\end{itemize}

Both these properties follow immediately from the construction of $\psi$'s as projections. Additionally. for $e_1,e_2 \in E'$  such that $e_1 \ge  e_2$ and $h \in H \cap S_{e_1}$ we have 
$$(h f'_{e_1})\psi_{e_1,e_2} =  (h \phi_{e_1,e_2}) f'_{e_2}$$ as $h f'_{e_1}$ is a tuple consisting of $h \phi_{e_1,e'} t_{e'}$ for $e_1 \ge e'$,  $(h \phi_{e_1,e_2}) f'_{e_2}$ is a tuple consisting of $(h \phi_{e_1,e_2}) \phi_{e_2,e'} t_{e'}$ for $e_2 \ge e'$ and  $\phi_{e_1,e_2} \phi_{e_2,e'} = \phi_{e_1,e'}$.

Note that $(F_e, f_e)$ is an approximating pair for $H\cap S_e$. First, $F_e$ is finite as a finite union of finite semigroups. Second, $f_e$ is injective because one of the factors of $F_e$ is $T_e$, which approximates $K \cap S_e \supseteq H\cap S_e$, meaning that all the elements will be separated in this factor. In order to see the multiplicative property, we will prove a more general statement, that is if $x \in H \cap S_{e_1}$, $y \in H \cap S_{e_2}$ and $xy \in H \cap S_{e_3}$ where $e_3 = e_1 \wedge e_2$, then $(x f) (yf) = (x f_{e_1}) (y f_{e_2}) = (xy) f_{e_3} = (xy) f$.

The first and the last of the equations above hold by the construction. Thus, we need to demonstrate that $(x f_{e_1}) (y f_{e_2}) = (xy) f_{e_3}$. We have
\begin{align*}
(x f_{e_1}) (y f_{e_2}) & =  (x  f_{e_1} \psi_{e_1,e_3}) (y f_{e_2} \psi_{e_2,e_3}) =  (x  f'_{e_1} \psi_{e_1,e_3}) (y f'_{e_2} \psi_{e_2,e_3}) \\
& = ((x \phi_{e_1,e_3})  f'_{e_3})  ((y  \phi_{e_2,e_3})f'_{e_3}) = ((x \phi_{e_1,e_3}) (y  \phi_{e_2,e_3})) f'_{e_3} = (xy) f'_{e_3} = (xy) f_{e_3}.
\end{align*}

This also guarantees the multiplicative property for $H$ and $(F,f)$. To end the proof, note that $F$ is finite as a finite semilattice of finite groups, and that $f$ is injective because elements of $H$ either fall in different $F_e$'s or separated within the given $F_e$ by the observation above.
\end{proof}

\begin{proposition}\label{prop_cliff1}
Let $S$ be a Clifford semigroup and $Z$ be a semilattice of groups $\bigsqcup\limits_{e\in E} G_e$ isomorphic to $S$. Then $S$ is LEF if and only if every $G_e$ is LEF.
\end{proposition}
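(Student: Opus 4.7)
The plan is to combine the substructure inheritance of LEF (Proposition \ref{prop_sublef}) with the semilattice construction in Lemma \ref{lem_semlef}, treating the two directions of the biconditional separately.

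For the forward direction I would observe that each $G_e$ is a subsemigroup of $Z$, since for $x,y \in G_e$ we have $e \wedge e = e$ and $\phi_{e,e}$ is the identity, so $x \cdot y \in G_e$. Transporting this through the isomorphism $S \cong Z$, we get that each $G_e$ embeds as a subsemigroup of $S$. By Proposition \ref{prop_sublef}, $G_e$ is an LEF semigroup, and since it is a group the earlier proposition equating LEF semigroups with LEF groups (Proposition \ref{prop_lefgr} in its group form) upgrades this to $G_e$ being LEF as a group.

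For the backward direction, assume each $G_e$ is LEF. Given an arbitrary finite subset $H \subseteq S$, transport it to a finite subset of $Z$ and invoke Lemma \ref{lem_semlef}. The finite subsemilattice $E' \subseteq E$ and the set $K = \bigcup_{e_1 \geq e_2 \text{ in } E'} (H \cap S_{e_1})\phi_{e_1,e_2}$ are finite by construction, so in particular each $K \cap G_e$ is a finite subset of the LEF group $G_e$. Hence we can choose approximating pairs $(T_e, t_e)$ for every $K \cap G_e$ (taking $T_e$ a finite group), and the lemma then assembles these data into an approximating pair $(F, f)$ for $H$, showing $S$ is LEF.

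I do not expect any serious obstacle: essentially all of the work has been packaged into Lemma \ref{lem_semlef}, and the argument reduces to checking that its hypotheses apply, namely the finiteness of each $K \cap G_e$ and the availability of an LEF approximating pair for it. The only minor subtlety is being explicit that membership of elements in $G_e$ is preserved under the structural maps $\phi_{e_1,e_2}$, which is immediate from the definition of a semilattice of semigroups and makes the finiteness of $K \cap G_e$ transparent.
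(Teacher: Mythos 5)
Your proposal is correct and follows essentially the same route as the paper: the forward direction via subsemigroup inheritance (Proposition \ref{prop_sublef}) and the backward direction by transporting $H$ to $Z$ and invoking Lemma \ref{lem_semlef} with approximating pairs for the finite sets $K \cap G_e$. The extra details you supply (finiteness of $K$, preservation of the $G_e$'s under the maps $\phi_{e_1,e_2}$) are harmless elaborations of what the paper leaves implicit.
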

\begin{proof}
If $S$ is LEF, then $Z$ is LEF and every $G_e$ is LEF as its subsemigroup by Proposition \ref{prop_sublef}.

Assume that every $G_e$ is LEF.  Consider a finite subset $H$ of $S$. To prove that there exists an approximating pair for $H$ it is enough to demonstrate that there exists an approximating pair for the corresponding set $H'$ in $Z$. This holds by Lemma \ref{lem_semlef}, meaning that $S$ is also LEF.
\end{proof}

\begin{proposition}
A Clifford semigroup $S$ is LEF if and only if it is locally embeddable into the class of finite Clifford semigroups. 
\end{proposition}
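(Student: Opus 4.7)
The plan is to mirror the completely simple case closely, since all the heavy machinery is already in place: Proposition \ref{prop_cliff1} reduces LEF-ness of $S$ to LEF-ness of the constituent groups $G_e$, and Lemma \ref{lem_semlef} provides the approximation construction for semilattices of semigroups. The task is essentially to observe that the construction in Lemma \ref{lem_semlef} produces a Clifford semigroup whenever the input data consists of groups.

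The easy direction is immediate: if $S$ is locally embeddable into the class of finite Clifford semigroups, then it is in particular LEF. For the forward direction, I would fix a Clifford decomposition $Z = \bigsqcup_{e \in E} G_e$ of $S$ and an arbitrary finite $H \subseteq S$. By Proposition \ref{prop_cliff1}, each $G_e$ is LEF as a semigroup, and by Proposition \ref{prop_lefgr} (the group version) each $G_e$ is therefore LEF as a group. Crucially, this means we are free to choose every approximating pair $(T_e, t_e)$ used in Lemma \ref{lem_semlef} with $T_e$ a finite \emph{group} rather than just a finite semigroup.

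With that strengthened choice in hand, I would apply Lemma \ref{lem_semlef} verbatim to obtain an approximating pair $(F, f)$ for $H$, where $F = \bigsqcup_{e \in E'} F_e$ and each $F_e = \prod_{e' \in E',\, e' \le e} T_{e'}$. A finite direct product of finite groups is a finite group, so every $F_e$ is a finite group, and the connecting maps $\psi_{e_1,e_2}$ are the natural projections, which are group homomorphisms. Hence $F$ is a semilattice of finite groups, i.e.\ a finite Clifford semigroup, and $(F,f)$ witnesses local embeddability of $H$ into the class of finite Clifford semigroups.

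The main obstacle is not really difficult, but it is worth flagging: one must be careful that Lemma \ref{lem_semlef} as stated only promises a semilattice of finite \emph{semigroups}, so the step upgrading the $T_e$ to groups (via the group-theoretic version of LEF) is the substantive observation. Once that is noted, no further calculation is needed, since the multiplicativity, injectivity and finiteness claims are all supplied by Lemma \ref{lem_semlef}.
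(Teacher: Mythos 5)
Your argument is correct and is essentially the paper's own proof: reduce to the semilattice-of-groups decomposition, use Propositions \ref{prop_lefgr} and \ref{prop_cliff1} to choose each approximating pair $(T_e,t_e)$ with $T_e$ a finite group, and then Lemma \ref{lem_semlef} yields a semilattice of finite groups, i.e.\ a finite Clifford semigroup (you even cite the correct lemma, where the paper's text has a slip pointing to the Rees-matrix lemma). Nothing further is needed.
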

\begin{proof}
If $S$ is LEF, then we can consider a semilattice of groups $Z$ isomorphic to $S$. We have that every finite subset $H$ of $S$ can be approximated by the same semigroup as the corresponding subset $H'$ of $Z$, which we can choose to be a semilattice of finite groups, i.e. a finite Clifford semigroup, by setting the approximations $T_e$ in Lemma \ref{lem_compsimp} to be groups, which is possible by Propositions \ref{prop_lefgr} and \ref{prop_cliff1}.

If $S$ is locally embeddable into finite Clifford semigroups, it is evidently LEF.
\end{proof}

It is worth noting that the question remains open for the class of completely regular semigroups. 

\begin{remark}The difficulty in this case arises from the fact that while they can be thought of as collections of completely simple semigroups, which are relatively well-understood in general as well as in terms of being LEF, the connections between the constituent parts are more complicated than for Clifford semigroups. Namely, the completely simple semigroups are also organised into a semilattice, but the maps go from the ones above into the translational hull of the ones below, and these images can hide an additional degree of non-embeddability.
\end{remark}

The remainder of the section is devoted to producing an example of a $\J$-trivial LEF semigroup which is not locally embeddable into finite $\J$-trivial semigroups.

Consider $Q = Sg \langle a,b,c,e,x | xb = cx, ac = ca, ae=ea, ec=ce, xca = xe, aex = ax\rangle$.

\begin{proposition}
The semigroup  $Q = Sg \langle a,b,c,e,x | xb = cx, ac = ca, ae=ea, ec=ce, xca = xe, aex = ax\rangle$ is $\J$-trivial.
\end{proposition}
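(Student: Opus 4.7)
The plan is to combine additive invariants with a normal-form analysis. First, I would verify that the two linear statistics $\xi(w) := \text{(number of }x\text{'s in } w)$ and $I(w) := \beta(w) + \gamma(w) - \alpha(w)$ (with $\alpha, \beta, \gamma$ the $a$-, $b$-, $c$-counts) are preserved by each of the six defining relations, so both descend to $Q$. Since $\xi$ is non-negative and additive, it is monotone along $\le_\J$; in particular, if $s \mathrel{\J} t$ with $s = u_1 t v_1$ and $t = u_2 s v_2$, then $\xi(u_i) = \xi(v_i) = 0$, so all four factors lie in $Q_0 := \langle a, b, c, e\rangle_Q$.

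Next I would identify $Q_0$ as the semigroup $\langle a, b, c, e \mid ac=ca,\ ae=ea,\ ec=ce\rangle$: any equality of $x$-free words in $Q$ is witnessed by a chain of applications of only the $x$-free relations R2--R4, because $\xi$ is invariant along any derivation. Inside $Q_0$ the four letter-counts are all non-negative additive homomorphisms, so if $s \mathrel{\J} t$ inside $Q_0$ then applying the same monotonicity trick to each count, the four surrounding factors have zero letter-counts and hence are identities, giving $s = t$. This settles the case $\xi(s) = 0$.

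For $\xi(s) \geq 1$ I would build a normal form by decomposing a representing word as $w_0 x w_1 x \cdots x w_k$ (with each $w_i$ an $x$-free segment), sorting $a, c, e$-letters within each $w_i$ into a chosen canonical order via R2--R4, pushing $b$'s across $x$-boundaries in a fixed direction via R1, and applying R5 and R6 exhaustively. A careful analysis of how $Q_0^1$-elements act on such a normal form by left or right multiplication would then show that whenever $s = u_1 u_2 s v_2 v_1$ in $Q$ with all factors in $Q_0^1$, one necessarily has $u_2 s v_2 = s$, i.e.\ $t = s$. The principal obstacle is the confluence of this rewriting system: the naive orientation is not locally confluent --- for instance $xcaex$ rewrites to $xeex$ via R5 but to $xex$ via R6 followed by R5 --- so closing the analysis requires a full enumeration of critical overlaps among R1, R5 and R6, which is presumably the content collected in the supplementary tables of Appendices A and B.
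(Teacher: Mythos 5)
Your preliminary reductions are sound and coincide with the paper's: the $x$-count and the statistic $\beta+\gamma-\alpha$ are indeed invariants of all six relations, so the four $\J$-factors are $x$-free, and in the $x$-free case every derivation uses only the commutation relations, so the letter-count argument settles $\xi(s)=0$. The gap is the case $\xi(s)\ge 1$, which is the substance of the proposition: you assert that a normal-form analysis ``would show'' that $s=u_1u_2sv_2v_1$ forces $u_2sv_2=s$, but that is precisely the statement to be proved and no argument is supplied. Two concrete ingredients are missing. First, nothing in your invariants pins down the multipliers: the global invariant $\beta+\gamma-\alpha$ does not exclude, say, $s=asb$; the paper needs two further invariants --- the number of $a$'s in the prefix up to the first $x$ is preserved by every relation applicable there, and the number of $a$'s plus $e$'s in the suffix from the last $x$ onward is preserved as well --- and only by combining these with the global one does it conclude that the right-hand factors are empty and the left-hand product is a power $e^t$. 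Second, and crucially, left multiplication by $e$ can genuinely be absorbed in $Q$ (for instance $e\cdot ax=aex=ax$, whereas $e\cdot x\ne x$), so even after reaching $e^t u=u$ with $t\ge 2$ one must still prove $eu=u$; this requires the case analysis on the block $m_0$ preceding the first $x$ (no $a$'s; a blocking $b$; an $a$ present and no $b$'s), which is the heart of the paper's proof and is entirely absent from your plan.

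The normal-form route could in principle be made to work, but it is strictly more work than you acknowledge and none of it is carried out: your own critical pair ($xcaex$ rewriting to $xeex$ and to $xex$) shows the naive orientation must be augmented (the paper adds rules such as $x e^\gamma x \to xex$), local confluence of the enlarged system must be verified, and even granted a normal form one still needs a genuine analysis of how left and right multiplication by $x$-free elements acts on it --- which would in effect reprove the prefix/suffix invariants and the $e$-absorption lemma just described. Note also that Appendix A and the rewriting system it certifies are used in the paper to prove that $Q$ is LEF; the $\J$-triviality proof does not invoke them at all, running instead on the counting invariants and the absorption argument.
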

\begin{proof}
Assume that there exist two elements $s,t$ of $Q$ such that $s \neq t$ and $s$ is $\J$-related to $t$. Let $u$ and $v$ be words in $\{a,b,c,e,x\}$ representing $s$ and $t$. By the choice of $s$ and $t$ there exist possibly empty words $l_1, r_1$ and $l_2, r_2$ such that $l_1 u r_1$ represents $t$ and $l_2 v r_2$ represents $s$. In particular, this means that we can rewrite $v$ into $l_1 u r_1$ using the defining relations and similarly we can rewrite $u$ into $l_2 v r_2$. Combining these two rewritings we have a finite sequence of words $w_0 = u, \ldots, w_n = l_2 l_1 u r_1 r_2$ such that neighbouring words are different only by a single application of the defining relations of $Q$. Note that none of the rewritings affect the existence of $x$'s inside of words, which means that $l_1, l_2, r_1, r_2$ must be free of $x$'s. There are two further possibilities.

If $u$ does not contain any $x$'s itself, then the only applicable relations are $ea = ae$, $ec=ce$ and $ac = ca$ which do not affect the length of the word, which means that $l_1,l_2, r_1, r_2$ must be empty and $u,v$ represent the same element of $Q$ which contradicts our choice of these words.

Otherwise, $u$ has the form $m_0 x m_1 x \cdots x m_k$ where $k \ge 1$ and $m_i$'s are possibly empty words in $\{a,b,c,e\}$. Denote by $u_i$ the prefix of $w_i$ up to the first occurrence of $x$ (in particular, $u_0 = m_0 x$ and $u_n = l_2 l_1 m_0 x$). Our rewritings affect $u_i$'s by being applied directly to them, thus having the form $xb = cx, ac = ca, ae=ea, ec=ce$ or  $aex = ax$. Note that none of these rewritings add or remove $a$'s from the word, which means that $u_n$ and $u_0$ have the same number of $a$'s in them.

Now we can apply similar analysis to the suffixes $v_i$ of $w_i$ starting with the last occurrence of $x$ (in particular, $v_0 =x m_k$ and $v_n = x m_k r_1 r_2$). Similarly, our rewritings affecting $v_i$'s must be of the form $xb = cx, ac = ca, ae=ea, ec=ce$ or $xca = xe$. Note that these rewritings keep  the sum $a_i + e_i$ constant where $a_i$ and $e_i$ are the number of occurrences of the respective letter in $v_i$. In particular, this means that $r_1$ and $r_2$ cannot contain $a$ or $e$ as otherwise $a_n + e_n > a_0 + e_0$. 

Bringing the facts about the prefixes and suffixes together we can conclude that $r_1$ and $r_2$ are empty words and $l_1, l_2$ are simply powers of $e$ since the defining relations of $Q$ keep constant the difference between the number of occurrences of $a$'s and the sum of occurrences of $b$'s and $c$'s (meaning $l_1,l_2,r_1,r_2$ do not contain $b$'s or $c$'s as they do not contain any $a$'s). Write $l_2 l_1 = e^t$.

If $t=0$, $u$ and $v$ represent the same element of $Q$ which contradicts our choice of these words. If $t = 1$, then either $l_1$ or $l_2$ is empty and $u,v$ represent the same element of $Q$ which contradicts our choice of these words. If $t > 1$, $e^t m_0 x m_1 x \cdots x m_k = m_0 x m_1 x \cdots x m_k$, we claim that $e m_0 x m_1 x \cdots x m_k = m_0 x m_1 x \cdots x m_k$ as well.

This follows from the fact that if our rewritings allow us to eliminate a non-zero number of $e$'s from the prefix $u_n = e^t m_0 x$, then they allow us to eliminate a single $e$ as well. To see this, consider the possible structures of $m_0$.

Case 1. $m_0$ does not contain $a$'s. In this case it is impossible to eliminate any $e$'s as the only rule to do so without $x$ on the left is $aex = ax$.

Case 2. $m_0$ has any $b$'s. It is impossible to remove them when there are no $x$'s to the left of them, and it is also impossible to eliminate any $e$'s as we cannot push them past these $b$'s.

Case 3. $m_0$ contains an $a$ and it does not contain any $b$'s. By using the commutativity of $a,c$ and $e$ as well as the relation $aex = ax$ we can eliminate any number of $e$'s. 

To sum up, we have that $e u$ and $u$ represent the same element of $Q$. Since $v$  and $l_1 u$ represent the same element of $Q$, and $l_1$ is a power of $e$, this means that $u$ and $v$ represent the same element of $Q$ which contradicts our choice of these words.

Thus, all cases are impossible and the initial assumption is incorrect.
\end{proof}

\begin{proposition}
The semigroup  $Q = Sg \langle a,b,c,e,x | xb = cx, ac = ca, ae=ea, ec=ce, xca = xe, aex = ax\rangle$ cannot be locally embedded into the class of finite $\J$-trivial semigroups.
\end{proposition}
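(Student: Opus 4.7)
The plan is to exhibit a specific finite $H\subseteq Q$ for which no finite $\J$-trivial semigroup can serve as the target of an approximating pair. The strategy is to assume, for contradiction, an approximating pair $(F,f)$ with $F$ finite and $\J$-trivial, abbreviate $A=f(a)$, $B=f(b)$, $C=f(c)$, $E=f(e)$, $X=f(x)$, and observe that (on the products that sit inside $H$) the defining relations of $Q$ descend to $AC=CA$, $AE=EA$, $EC=CE$, $XB=CX$, $XCA=XE$, $AEX=AX$ in $F$. Then I would force a collision among elements of $H$ that are distinct in $Q$ using standard pseudoidentities of the pseudovariety of finite $\J$-trivial semigroups.

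First I would invoke the pseudoidentities $(uv)^\omega u=(uv)^\omega=v(uv)^\omega$ and the cyclic identity $(uv)^\omega=(vu)^\omega$, valid in every finite $\J$-trivial semigroup. Combining these with the defining relations --- in particular the cyclic shifts $(XCA)^\omega=(CAX)^\omega=(AXB)^\omega$, the last using $CX=XB$ --- I would show that the idempotent $(XE)^\omega$ in $F$ is absorbed on both sides by each of $A,B,C,E,X$, so that it acts as a two-sided zero in the subsemigroup of $F$ generated by the generator images.

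Next I would extract concrete derived identities such as $(XB)^\omega A=(XB)^\omega E$ in $F$: this follows from computing $(XB)^\omega\cdot XCA$ in two ways, using $XCA=XE$ on the one hand and $(XB)^\omega X=(XB)^\omega$, $(XB)^\omega C=(XB)^\omega$, $AC=CA$ on the other. Each such derivation pairs two words of $Q$ that are distinct by the normal-form analysis from the preceding proposition but that $f$ must identify in $F$, and I would catalogue several such pairs.

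Finally I would choose $H$ to contain the generators together with a carefully selected family of iterated products --- powers of the form $(xe)^k$ and $(xb)^k$, their products with $a$ and $e$, and the intermediate factors needed so that the multiplicativity of $f$ can be invoked along the chains of the derivation --- with the precise list and the verification that at least one forced identification has both ends inside $H$ tabulated in Appendices A and B. The main obstacle is precisely this last step: the stabilisation indices of $(XE)^n$, $(XB)^n$, etc., depend on $F$ rather than only on $H$, so I will need the combinatorial bookkeeping in the appendix tables to be sure that some low-depth identification is forced strictly inside the chosen finite $H$, independently of the specific finite $\J$-trivial $F$.
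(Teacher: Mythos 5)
Your computations inside $F$ are fine as far as they go: the relations do descend to the generator images once $H$ contains all elements represented by words of length at most $3$, the pseudoidentities $(uv)^\omega u=(uv)^\omega=v(uv)^\omega$ and $(uv)^\omega=(vu)^\omega$ do hold in finite $\J$-trivial semigroups, and identities such as $(XB)^\omega A=(XB)^\omega E$ or the absorption properties of $(XE)^\omega$ are correctly derivable. The genuine gap is exactly the one you flag and do not resolve: every identification you produce involves an $\omega$-power whose exponent depends on $F$, so it never collapses the images of two elements of a finite $H$ chosen in advance. Your plan to put the powers $(xe)^k$, $(xb)^k$ and their products with $a$, $e$ into $H$ cannot be completed, because the exponent needed is at least the idempotent power of the relevant element of $F$, and $F$ is only chosen after $H$; no combinatorial bookkeeping removes that dependence, and the paper's Appendices A and B in any case contain nothing of this kind (they are local-confluence tables for the rewriting systems used to prove that $Q$ \emph{is} LEF).

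The missing idea is a descent that converts the $F$-dependent identity back into a collision between two fixed short words. The paper takes $H$ to be all elements represented by words of length at most $3$, uses finiteness plus $\J$-triviality to get $a^{\prime n}=a^{\prime n+1}$ for some $n$ (hence $x'a^{\prime n}x'=x'a^{\prime n+1}x'$), and then exploits the chain $x'a'x'b'=x'e'x'$ and $x'a^{\prime k+1}x'b'=x'a^{\prime k}x'$ for $k\ge 1$: multiplying $x'a^{\prime n}x'=x'a^{\prime n+1}x'$ on the right by $b^{\prime n}$ reduces both sides step by step and yields $x'e'x'=x'a'x'$, i.e. $f_H(xex)=f_H(xax)$. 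Since $xex\neq xax$ in $Q$ (the defining relations preserve the number of $a$'s minus the total number of $b$'s and $c$'s) and both words have length $3$, this contradicts injectivity on the fixed $H$. Note the structural point you are missing: only the initial transfer of the relations and the final colliding pair need to lie in $H$; all intermediate elements, including the high powers of $a'$, need only exist in $F$ and require no preimages. Without such a descent (or an equivalent device producing a collision between fixed elements of $H$), your argument does not close.
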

\begin{proof}
Let $H$ be the subset of $Q$ consisting of all elements represented by words of length $3$ or less in the alphabet $\{a,b,c,e,x\}$.

Assume that there exists $F_H, f_H$ satisfying the requirements of Definition \ref{def_lef} for $H$ with $F_H$ being $\J$-trivial. Denote $a' = a f_H, b' =b f_H, c' = c f_H, e' = ef_H$ and $x' = x f_H$. By the multiplication laws we get $x'b' = c'x', a'c' = c'a', a'e'=e'a',c'e' =e'c', x'c'a' = x'e', a'e'x' = a'x'$.

Since $F_H$ is finite and $\J$-trivial, there exists $n \in \NN$ such that $a^{\prime n}$ is an idempotent and  furthermore $a^{\prime n}= a^{\prime n+1}$. This implies that $x' a^{\prime n} x' = x' a^{\prime n+1} x'$. Note that $x' a' x' b' = x' a' c' x' = x' c' a' x' = x' e' x'$ and similarly 
$$ x' a^{\prime k +1} x' b' = x' a^{\prime k +1} c' x' = x' c' a^{\prime k +1} x' = x' e' a^{\prime k} x'
= x' a^{\prime k} e' x' = x' a^{\prime k} x'$$ for any $k \ge 1$. Thus, we can get $x' a' x' = x' e' x'$ by multiplying the equality above by $b^{\prime n}$, which contradicts the injectivity of $f_H$ as $xax \neq xex$ in $Q$. The latter follows from the fact that the rewritings corresponding to the generating relations of $Q$ do not change the difference between the number of occurrences of $a$ and the sum of occurrences of $b$ and $c$. This means that our initial assumption is incorrect.
\end{proof}

\begin{remark}
A verbatim proof can be used to demonstrate that $Q$ is not locally embeddable into the classes of $\mathcal{L}$- or $\mathcal{R}$-trivial finite semigroups.
\end{remark}

\begin{proposition}
The semigroup $Q = Sg \langle a,b,c,e,x | xb = cx, ac = ca, ae=ea, ec=ce, xca = xe, aex = ax\rangle$ is LEF.
\end{proposition}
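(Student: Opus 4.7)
Given a finite subset $H \subseteq Q$, the plan is to construct an approximating pair $(F_H, f_H)$ by a Rees-quotient argument in the spirit of the Malcev example, but with substantial modifications: unlike Malcev's, the defining relations of $Q$ are not all length-preserving, as both $aex = ax$ and $xca = xe$ reduce length by one. Consequently, the minimum-length function $\ell_{\min}(\cdot)$ is not additive under multiplication, and the level set $\{s : \ell_{\min}(s) \geq N\}$ is \emph{not} an ideal of $Q$. For instance, $\ell_{\min}(e^n) = n$ can be arbitrarily large, while $e^n \cdot ax = ax$ has $\ell_{\min} = 2$: the trailing $e$'s of $e^n$ are all absorbed under multiplication by $ax$ through repeated application of $aex = ax$ (after moving an $a$ to the right via commutativity). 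So the naive Malcev recipe breaks down.

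The preparatory step is to establish a canonical normal form: every $s \in Q$ is uniquely written as $m_0 x m_1 x \cdots x m_k$ with each $m_i$ reduced by $\{a,c,e\}$-commutativity inside each maximal $b$-free sub-block, by $xca = xe$ pair-elimination inside each $m_i$ with $i \geq 1$, and by $aex = ax$ at the last sub-block of any $m_i$ followed by $x$. Much of this analysis is already implicit in the earlier proof of $\J$-triviality, so only the assembly into a normal form for arbitrary elements is new.

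The refined strategy is to replace $\ell_{\min}$ by a complexity function $\mu \colon Q \to \mathbb{N}$ that deliberately \emph{excludes} the letters capable of being absorbed through boundary reductions in a subsequent multiplication: chiefly the trailing $e$'s in a block that also contains an $a$, and $(c,a)$-pairs bordering an $x$. The aim is to prove an inequality of the form
\[
  \mu(st) \geq \mu(s) + \mu(t) - C
\]
for some universal constant $C$ depending only on the presentation; this reduces the failure of $\ell_{\min}$ to an additive constant that can be accommodated by shifting the threshold. Granted such a $\mu$, the set $I_N = \{s \in Q : \mu(s) \geq N + C\}$ is an ideal for every $N > 0$, the Rees quotient $Q / I_N$ is finite because only finitely many canonical forms have bounded $\mu$, and the natural projection $Q \to Q/I_N$ is injective on $H$ once $N$ exceeds the maximum of $\mu(h)$ over $h \in H$.

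The main obstacle is the precise definition of $\mu$ and the verification of $\mu(st) \geq \mu(s) + \mu(t) - C$. This requires a careful case analysis of how the canonical forms of $s$ and $t$ combine at the single boundary block $m_k n_0$ produced in the canonical form of $st$, identifying exactly which letters can be consumed by the local rewrites $aex = ax$ and $xca = xe$ applied at that boundary. The key point is that the absorbable portion depends only on the structure of $m_k$ together with the leading sub-block of $n_0$, and is bounded in a way that matches the letters already omitted from $\mu$ by design; the inequality then falls out of matching these two descriptions.
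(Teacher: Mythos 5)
Your diagnosis of why the naive Malcev/length argument fails is correct, but the repair you propose cannot work: the obstruction is not the difficulty of verifying $\mu(st) \ge \mu(s)+\mu(t)-C$, it is that no function $\mu$ with the three properties you need (finite sublevel sets, threshold sets $I_N=\{\mu\ge N+C\}$ being ideals, and $H\cap I_N=\emptyset$ for large $N$) can exist, because $Q$ has essentially no Rees quotients of the required kind. Concretely, for every $n$ one has $x a^{n} c^{n-1} x = x a x$ and $x a^{n} c^{n} x = x e x$ in $Q$ (commute the $a$'s and $c$'s, apply $xca=xe$ repeatedly, then absorb the produced $e$'s with $ae=ea$ and $aex=ax$; these are rules 7 and 8 of the paper's rewriting system). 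The elements $x a^{n}$, $n\ge 1$, are pairwise distinct, since the defining relations preserve the number of $x$'s and the quantity (number of $a$'s)$-$(number of $b$'s)$-$(number of $c$'s). Hence any ideal $I$ of $Q$ with finite complement contains some $x a^{n}$, and therefore contains both $xax = xa^n\cdot c^{n-1}x$ and $xex = xa^n\cdot c^n x$, which are distinct in $Q$ (the invariant above is $1$ versus $0$). So every Rees quotient of $Q$ with finite complement identifies $xax$ with $xex$, and no such quotient can be injective on a finite subset containing both; in particular your $I_N$ can never simultaneously be an ideal, have finite complement, and miss $H$. This is independent of how you tune $\mu$: if $\mu$ counts the $a$'s in $xa^n$ the ideal property fails (take $s=xa^n$, $t=c^{n-1}x$, so $\mu(st)$ is bounded while $\mu(s)\to\infty$), and if it does not, the sublevel sets are infinite and $Q/I_N$ is not finite. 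The same collapse already occurs without $c$'s: $e^k\cdot ax=ax$ forces $ax$ into every finite-complement ideal.

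The paper therefore does not use a Rees quotient of $Q$ at all. It passes to a finite quotient $F_n$ of $Q^0$ obtained by \emph{adding relations}: the periodicity relations $a=a^{2n+1}$, $b=b^{2n+1}$, $c=c^{2n+1}$, $e=e^{2n+1}$, together with annihilating all words containing more than $n$ maximal blocks of the form $x$, $b^q$ or $xb^q$ (this block count, unlike length or your $\mu$, is invariant under all the relations and non-decreasing under concatenation, so those words genuinely form an ideal). Finiteness of $F_n$ then follows from periodicity plus the bounded block count, and injectivity on the ball $H_n$ is not automatic (the map is far from injective off an ideal, e.g.\ $xa^{2n+1}x$ and $xax$ are identified) but is proved by exhibiting confluent, noetherian rewriting systems for $Q$ and for $F_n$ and checking that normal forms of elements of $H_n$ remain distinct and nonzero. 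If you want to salvage your outline, the missing idea is precisely this: replace the threshold ideal by torsion-type relations on the commuting generators combined with the block-count ideal, and replace ``injective off the ideal'' by a normal-form comparison in the finite quotient.
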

\begin{proof}
Denote by $H_n$ the subset of $Q$ consisting of all elements represented by words of length $n$ or less in the alphabet $\{a,b,c,e,x\}$, $n \ge 5$.

Let $w$ be any word in $\{a,b,c,e,x\}$. It has the form  $m_0 y_0 m_1 y_1 \cdots m_{k-1} y_{k-1} m_k$ where $k \ge 0$, $y_i$'s are the maximal subwords of the form $x$, $b^q$ or $x b^q$ for $q > 0$ and $m_i$'s are words in $\{a,c,e\}$. Denote $k$ by $s(w)$ and denote the generating relations in $Q$ by $\rho_0$.

Consider the following rewriting system.

\begin{enumerate}
\item $xb \rightarrow cx, ca \rightarrow ac, ea \rightarrow ae, ec \rightarrow ce$;

\item $x a^\alpha c^\beta \rightarrow x a^{\alpha - \beta} e^\beta$, $0< \beta \le \alpha$;
\item $x a^\alpha c^\beta \rightarrow x c^{\beta - \alpha} e^\alpha$, $0< \alpha < \beta$;

\item $ a^\alpha c^\beta e^\gamma x  \rightarrow a^\alpha c^\beta x $, $0<\alpha$, $0 \le \beta$, $0 < \gamma$;

\item $x e^\gamma x \rightarrow xex$, $1< \gamma$;
\item $x c^\beta e^\gamma x \rightarrow x c^{\beta} e x$, $0< \beta$, $1 <\gamma$;
\item $x a^\alpha c^\beta e^\gamma x \rightarrow x a^{\alpha - \beta} x$, $0\le \beta < \alpha$, $0\le \gamma$ with $\beta$ and $\gamma$ being not simultaneously zero;
\item $x a^\alpha c^\beta e^\gamma x\rightarrow x e x$, $0 <\alpha = \beta$, $0\le \gamma$;
\item $x a^\alpha c^\beta e^\gamma x\rightarrow x c^{\beta - \alpha} e x$, $0 <\alpha < \beta$, $0\le \gamma$;
\end{enumerate}

It is evident that this system is noetherian as every rule makes the rewritten word smaller in the lexicographical order induced by $a<c<e<b<x$. It is also locally confluent. To see this, consider a word $w$ such that we can apply two rules to its subwords, resulting in distinct words $w_1$ and $w_2$. Our goal is to demonstrate that there exists a word $w_0$ such that we can rewrite both $w_1$ and $w_2$ into $w_0$. If the rewritten subwords do not intersect, the statement is evident. Below we will consider $w = w' t w''$ where $t$ is the joint of the two intersecting subwords undergoing the rewritings and $w', w''$ are arbitrary. Correspondingly, $w_i = w' t_i w''$, $i \in \{0,1,2\}$. Note that for all rewritings where the intersection is the letter $x$ and neither of the rules is $xb \rightarrow cx$ the confluence is evident as these rules do not affect the $x$ and do not push letters to the either side of it. For the rest of the rewriting pairs see Appendix A.

Thus, we can establish a normal form for every element in $Q$.

Let $F_n$ be the semigroup defined as follows:

\begin{itemize}
\item It is generated by $a,b,c,d,e,x$ and a zero;
\item We have $xb = cx, ac = ca, ae=ea, ec=ce, xca = xe, aex = ax$ in $F_n$;
\item We also have $a = a^{2n+1}, b = b^{2n+1}, c=c^{2n+1}, e=e^{2n+1}$;
\item If  $s(w)>n$ in $w$, then $w$ represents the zero.
\end{itemize}

We claim that this is a finite semigroup and that the map $f_n$ which is a restriction of the natural homomorphism from $Q^{0}$ to $F_n$ onto $H_n$ satisfies the criteria of Definition \ref{def_lef}.

Consider an arbitrary word $w$. Recall that we can write it as  $m_0 y_0 m_1 y_1 \cdots m_{k-1} y_{k-1} m_k$. We claim that the rewritings corresponding to relations $xb = cx, ac = ca, ae=ea, ec=ce, xca = xe, aex = ax$ or $a = a^{2n+1}, b = b^{2n+1}, c=c^{2n+1}, e=e^{2n+1}$ (denote these $\rho$ for brevity) keep $s(w)$ constant. It follows from the fact that the existence of $x$'s is not affected by elements of $\rho$, while for $b$'s the only way to completely remove its power is to change $xb$ into $cx$, which retains the relative position of the corresponding $y_i$.

Additionally, due to commutativity and periodicity of $a,c,e$, there are at most $(2n+1)^3$ distinct forms of $m_i$. Similarly, there are only $2 \cdot (2n+1)$ distinct forms of $y_i$.

Furthermore, it is evident that for any two words $u,v$ we have $s(uv) \ge \max (s(u) , s(v))$ as concatenation does not decrease this value (only the bordering powers of $b$ can become a single part in the partition of $uv$).

This means that the words with $s(w) > n$ form an ideal in $\{a,b,c,e,x\}^*$, while all the other words correspond to a finite number of elements of $F_n$. Thus, $F_n$ is finite.

In order to see that $f_n$ separates the elements of $H_n$, consider the following rewriting system corresponding to the given relations of $F_n$.

\begin{enumerate}
\item $xb \rightarrow cx, ca \rightarrow ac, ea \rightarrow ae, ec \rightarrow ce$;

\item $a^{2n+1} \rightarrow a, b^{2n+1} \rightarrow b, c^{2n+1} \rightarrow c, e^{2n+1} \rightarrow e$;

\item $x a^\alpha c^\beta \rightarrow x a^{\alpha - \beta} e^\beta$, $0< \beta \le \alpha  \le 2n$;
\item $x a^\alpha c^\beta \rightarrow x c^{\beta - \alpha} e^\alpha$, $0< \alpha < \beta  \le 2n$;
\item $x a^\alpha c^\beta e^\gamma \rightarrow x c^{2n+\beta -\alpha} e^{\alpha+\gamma -2n}$,  $0< \alpha, \gamma \le 2n$, $0 \le \beta \le 2n$, $\alpha+\gamma > 2n, \gamma+\beta \le 2n$;
\item $x a^\alpha c^\beta e^\gamma \rightarrow x a^{2n+\alpha - \beta} e^{\beta+\gamma -2n}$,  $0< \beta, \gamma \le 2n$,  $0 \le \alpha \le 2n$, $\alpha+\gamma \le 2n, \gamma+\beta > 2n$;
\item $x a^\alpha c^\beta e^\gamma \rightarrow x a^{\alpha - \beta} e^{\beta+\gamma - 2n}$,  $0< \alpha,\beta, \gamma \le 2n$, $\alpha+\gamma, \gamma+\beta > 2n$, $\alpha > \beta$;
\item $x a^\alpha c^\beta e^\gamma \rightarrow x e^{\beta+\gamma - 2n}$,  $0< \alpha,\beta, \gamma \le 2n$, $\alpha+\gamma, \gamma+\beta > 2n$,  $\alpha = \beta$;
\item $x a^\alpha c^\beta e^\gamma \rightarrow x c^{\beta - \alpha} e^{\alpha+\gamma -2n}$,  $0< \alpha,\beta, \gamma \le 2n$, $\alpha+\gamma, \gamma+\beta > 2n$, $\beta > \alpha$;

\item $ a^\alpha c^\beta e^\gamma x  \rightarrow a^\alpha c^\beta x $, $0<\alpha, \gamma  \le 2n$, $0 \le \beta \le 2n$;

\item $x e^\gamma x \rightarrow xex$, $1< \gamma  \le 2n$;

\item $x c^\beta e^\gamma x \rightarrow x c^{\beta} e x$, $0< \beta < n$, $1 <\gamma \le 2n$;
\item $x c^\beta e^\gamma x \rightarrow x a^{2n-\beta} x$, $n\le \beta < 2n$, $0 <\gamma \le 2n$ and $x c^{2n} e^\gamma x \rightarrow x e x$, $0 <\gamma \le 2n$;

\item $x a^\alpha c^\beta e^\gamma x \rightarrow x a^{\alpha - \beta} x$, $0<\alpha - \beta \le n$, $0<  \alpha  \le 2n$, $0\le \beta,\gamma \le 2n$ with $\beta$ and $\gamma$ not simultaneously zero;
\item $x a^\alpha c^\beta e^\gamma x \rightarrow x c^{2n-\alpha+\beta} e x$, $n<\alpha - \beta < 2n$, $0< \alpha  \le 2n$, $0\le \beta, \gamma \le 2n$;

\item $x a^\alpha c^\beta e^\gamma x\rightarrow x e x$, $0 <\alpha = \beta \le 2n$, $0\le \gamma \le 2n$;
\item $x a^\alpha c^\beta e^\gamma x\rightarrow x c^{\beta - \alpha} e x$, $0 < \beta - \alpha < n$, $0< \beta, \alpha  \le 2n$, $0\le \gamma \le 2n$;
\item $x a^\alpha c^\beta e^\gamma x\rightarrow x a^{2n - \beta + \alpha} x$, $n \le \beta - \alpha < 2n$, $0< \beta, \alpha  \le 2n$, $0\le \gamma \le 2n$;
\end{enumerate}

It is evident that this system is noetherian as every rule makes the rewritten word smaller in the aforementioned lexicographical order. It is also locally confluent which can be demonstrated using an argument similar to the one for $Q$, with the supplementary table found in Appendix B.

Furthermore, it is clear that the normal forms of the elements of $H_n$ do not represent zero and they are also irreducible under this rewriting system. Thus, $f_n$ separates elements of $H_n$, which concludes the proof.
\end{proof}

\begin{remark}
The example above simultaneously demonstrates that $Q$ is residually finite, but not residually $\J$-trivial finite.
\end{remark}

\section{LWF semigroups}

We turn to the notion of LWF, aiming to prove that it is properly distinct from being LEF for general semigroups. Consider two closely related examples, $$S = Sg \langle a,b,c,e,x | axb = acx, ac = ca, xca = xe, ea = ae, aex = ax, xexb = bxex \rangle$$ and $$T = Sg \langle a,b,c,d,e,x | axb = acx, ac = cd, xcd = xe, ed = ae, aex = ax, xexb = bxex \rangle,$$ which also resemble the final construction of Section 2. Our goal is to demonstrate their properties, including not being LEF and being $\J$-trivial (note that no previous examples of non-LEF $\J$-trivial structures have been known), then develop additional theory of LWF structures and infer that $S$ and $T$ satisfy the new sufficient condition.

\begin{lemma}\label{lem_clarissues}
Let $$S = Sg \langle a,b,c,e,x | axb = acx, ac = ca, xca = xe, ea = ae, aex = ax, xexb = bxex \rangle$$ and $p,z,q \in \{a,b,c,e,x\}^*$ such that $z$ is non-empty, and $z, pzq$ represent the same element of $S$. Then $p = e^t$ for some $t \ge 0$ and $q$ is empty.
\end{lemma}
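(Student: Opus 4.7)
The plan is to follow the same general strategy used in the earlier proof that $Q$ is $\J$-trivial: I would analyse invariants of the defining rewriting system of $S$ applied to suitable subwords of the intermediate words in a derivation from $pzq$ to $z$. Such a derivation is finite because $pzq$ and $z$ represent the same element of $S$, and the aim is to extract enough conserved quantities to force $p$ to be a power of $e$ and $q$ to be empty.

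First I would establish the basic global invariants by inspecting each defining relation: the number of $x$'s in a word $w$, which I denote $|w|_x$, is preserved by every rule, and so is the linear combination $|w|_b+|w|_c-|w|_a$. From $|pzq|_x=|z|_x$ one deduces $|p|_x=|q|_x=0$, so $p$ and $q$ lie in $\{a,b,c,e\}^*$.

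Next I would split into two cases according to whether $z$ itself contains an $x$. If $z$ is $x$-free, then $pzq$ is also $x$-free, and since $|w|_x$ is conserved step-by-step every word in the derivation is $x$-free; but the only relations applicable to an $x$-free word are $ac=ca$ and $ea=ae$, neither of which alters the subsequence of non-$a$ letters, so this subsequence of $pzq$ must equal that of $z$, forcing $p,q\in\{a\}^*$. The global invariant $|w|_b+|w|_c-|w|_a$ then gives $|p|_a+|q|_a=0$, so $p$ and $q$ are both empty, which matches the conclusion with $t=0$. If instead $z$ contains an $x$, then so does every intermediate word, and one can speak unambiguously of the prefix before the first $x$ and the suffix after the last $x$ at every stage. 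Here I would prove three finer conservation laws: the number of $a$'s in the prefix before the first $x$, and the quantities $|a|+|e|$ and $|a|-|c|$ taken over the suffix after the last $x$, are each preserved by every rule. Applied to $pzq=z$ these yield $|p|_a=0$ and $|q|_a=|q|_e=|q|_c=0$; feeding this back into the global invariant gives $|p|_b+|p|_c+|q|_b=0$, so $p$ consists only of $e$'s and $q$ is empty, as required.

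The main obstacle is the verification of the prefix and suffix invariants. The bookkeeping is concentrated in the rules $axb\leftrightarrow acx$, $aex\leftrightarrow ax$, $xca\leftrightarrow xe$, and $xexb\leftrightarrow bxex$, because when any of these is applied straddling the first (respectively, last) $x$ it can shift that landmark by one position within the word, so one has to carefully track what enters or leaves the relevant prefix or suffix in each configuration and confirm that the claimed linear combination compensates. Once this case-by-case check is completed, the remainder of the argument reduces to a direct application of the conservation laws to the equation $pzq=z$.
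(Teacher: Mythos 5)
Your plan is correct and follows essentially the same route as the paper's proof: it splits on whether $z$ contains an $x$, and uses the invariance of the $a$-count in the prefix before the first $x$, of the $(a+e)$-count in the suffix after the last $x$, and of the global quantity $|w|_b+|w|_c-|w|_a$ to force $p\in\{e\}^*$ and $q$ empty, exactly as in the paper. Your additional suffix invariant $|a|-|c|$ does hold under all the relations but is redundant, and in the $x$-free case the subsequence argument can be replaced by the simpler observation that the only applicable relations ($ac=ca$, $ea=ae$) preserve word length.
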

\begin{proof}
Since $z$ and $pzq$ represent the same element of $S$, there exists a finite sequence of words $w_0 = z, \ldots, w_n = p z q$ such that neighbouring words are different only by a single application of the defining relations of $S$.  Note that none of the rewritings affect the existence of $x$'s inside of words, which means that $p,q$ must be free of $x$'s. There are two further possibilities.

If $z$ does not contain any $x$'s itself, then the only applicable relations are $ea = ae$ and $ac = ca$ which do not affect the length of the word, which means that $p,q$ must be empty.

Otherwise, $z$ has the form $m_0 x m_1 x \cdots x m_k$ where $k \ge 1$ and $m_i$'s are possibly empty words in $\{a,b,c,e\}$. Denote by $u_i$ the prefix of $w_i$ up to the first occurrence of $x$ (in particular, $u_0 = m_0 x$ and $u_n = p m_0 x$). There are three ways our rewritings can affect them:

\begin{enumerate}
\item We apply a rewriting directly to $u_i$ or $u_{i+1}$ itself (so it must be one of the $axb = acx, ac=ca, ea = ae, aex = ax$);
\item We bring in an extra $b$ via $xexb = bxex$;
\item We remove an extra $b$ via $xexb = bxex$.
\end{enumerate}

Note that none of these rewritings add or remove $a$'s from the word, which means that $u_n$ and $u_0$ have the same number of $a$'s in them.

Now we can apply similar analysis to the suffixes $v_i$ of $w_i$ starting with the last occurrence of $x$ (in particular, $v_0 =x m_k$ and $v_n = x m_k q$). There are three ways our rewritings can affect them:

\begin{enumerate}
\item We apply a rewriting directly to $v_i$ or $v_{i+1}$ itself (so it must be one of the $axb = acx, ac=ca, ea = ae, xca = xe$);
\item We bring in an extra $b$ via $xexb = bxex$;
\item We remove an extra $b$ via $xexb = bxex$.
\end{enumerate}

Note that these rewritings keep  the sum $a_i + e_i$ constant where $a_i$ and $e_i$ are the number of occurrences of the respective letter in $v_i$. In particular, this means that $q$ cannot contain $a$ or $e$ as otherwise $a_n + e_n > a_0 + e_0$. 

Bringing the facts about the prefixes and suffixes together we can conclude that $q$ is an empty word and $p$ is a power of $e$ since the defining relations of $S$ keep constant the difference between the number of occurrences of $a$'s and the sum of occurrences of $b$'s and $c$'s (meaning $p,q$ do not contain $b$'s or $c$'s as they do not contain any $a$'s).
\end{proof}

\begin{lemma}\label{lem_weakj}
    The semigroup $$S = Sg \langle a,b,c,e,x | axb = acx, ac = ca, xca = xe, ea = ae, aex = ax, xexb = bxex \rangle$$ is $\J$-trivial.
\end{lemma}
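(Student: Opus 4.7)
Suppose for contradiction that $s \mathrel{\J} t$ in $S$ with $s \neq t$, and fix words $u,v \in \{a,b,c,e,x\}^*$ representing $s$ and $t$. The definition of $\mathrel{\J}$ supplies words $l_1, r_1, l_2, r_2$ such that $l_1 u r_1$ represents $t$ and $l_2 v r_2$ represents $s$; substituting one into the other shows that $u$ and $l_2 l_1 u r_1 r_2$ represent the same element of $S$. The plan is to apply Lemma \ref{lem_clarissues} with $z := u$, $p := l_2 l_1$ and $q := r_1 r_2$, which forces $r_1 = r_2 = \varepsilon$ and $l_2 l_1 = e^t$ for some $t \geq 0$. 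Writing $l_1 = e^{t_1}$ and $l_2 = e^{t_2}$, the element of $S$ represented by $v$ is the same as that of $e^{t_1} u$, so it suffices to show $e^{t_1} u = u$ in $S$.

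If $t_1 = 0$ this is immediate; otherwise, the same double substitution also yields $e^{t_1 + t_2} u = u$ in $S$ with $t_1 + t_2 \geq 1$, and the next step is to establish the key claim that $e^{t_0} u = u$ in $S$ with $t_0 \geq 1$ forces $eu = u$ in $S$. A straightforward induction on $t_1$ then delivers $e^{t_1} u = u$ and contradicts the assumption $s \neq t$.

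To prove the key claim I would mimic the case analysis at the end of the $\J$-triviality proof for $Q$. Fix a sequence $w_0 = u, w_1, \ldots, w_n = e^{t_0} u$ of single defining-relation applications. Because every relation of $S$ preserves the number of occurrences of $x$, if $u$ has no $x$ then neither does any $w_i$, and only the length-preserving commutativities $ac = ca$ and $ea = ae$ can apply, forcing $t_0 = 0$. Otherwise write $u = m_0 x m_1 x \cdots x m_k$ with $k \geq 1$ and $m_i \in \{a,b,c,e\}^*$, and case split on $m_0$: if $m_0$ contains no $a$, the only relation that could alter the number of $e$'s in the prefix before the first $x$ is $aex = ax$, which requires an adjacent $a$ and so cannot fire; if $m_0$ contains a $b$, no relation of $S$ pushes an $e$ past a $b$ from the left, blocking any reduction; and if $m_0$ contains an $a$ but no $b$, then $ea = ae$, $ac = ca$ and $aex = ax$ between them explicitly eliminate exactly one $e$ from the prefix, yielding $eu = u$ in $S$.

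The main obstacle is that $S$ differs from $Q$ by weakening $xb = cx$ to $axb = acx$ and by adding $xexb = bxex$. The restriction of $xb = cx$ only cuts down the available prefix moves and so, if anything, helps. The delicate point is $xexb = bxex$, which may inject a fresh $b$ into the word from the right of an $xex$-block; I expect this not to break the argument because any such injected $b$ lands strictly between two $x$'s and so cannot enter $m_0$, but verifying this while tracking how the position of the first $x$ can shift along the sequence and confirming that no chain of rewritings creates a new route to eliminating $e$'s from the prefix beyond a single one at a time is the principal bookkeeping hurdle.
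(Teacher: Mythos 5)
Your overall strategy coincides with the paper's: apply Lemma \ref{lem_clarissues} to force $r_1r_2$ empty and $l_2l_1=e^t$, then reduce everything to showing that $e^{t}u=u$ (with $t\ge 1$) forces $eu=u$, via a case analysis on the block $m_0$ preceding the first $x$. The gap lies in that case analysis, which you have imported from $Q$ without adapting it to the new relation $xexb=bxex$. Your Case 2 claim --- that any $b$ in $m_0$ blocks elimination of $e$'s from the prefix --- is false in $S$: if $m_0$ ends in $b$'s adjacent to the first $x$, say $m_0=g_0ag_1a\cdots ag_qb^p$ with $g_j\in\{c,e\}^*$ and $q,p>0$, the remainder of the word may be rewritable so that an $xex$ block immediately follows, and then $bxex\rightarrow xexb$ ejects the trailing $b$'s from the prefix, exposing the $a$'s so that $e$'s can afterwards be eliminated. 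Your safeguard that ``any injected $b$ lands strictly between two $x$'s and so cannot enter $m_0$'' fails for the same reason: when the $xex$ block begins at the \emph{first} $x$ of the word, the $b$ produced by $xexb\rightarrow bxex$ lands exactly at the right end of $m_0$, and conversely a $b$ sitting there can leave the prefix. The paper devotes a separate Case 4 to precisely this configuration, showing that a single $e$ can still be eliminated: rewrite $u$ into the form $g_0a\cdots ag_qb^p\,xex\,w''$, commute $b^p$ past $xex$, perform the Case 3 manoeuvre to delete one $e$, and retrace the rewritings. Without this case your argument does not establish $eu=u$ in exactly the situation created by the relation that distinguishes $S$ from $Q$, so the proof as proposed does not go through; correspondingly, the correct formulation of the ``blocked'' case is that $m_0$ contains a $b$ occurring \emph{before some non-$b$ letter}, not merely that $m_0$ contains a $b$.

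A smaller but genuine defect: $S$, unlike $Q$, has no relation $ec=ce$, so in your Case 3 the relations $ea=ae$, $ac=ca$, $aex=ax$ alone cannot bring the leading $e$ next to an $a$ adjacent to $x$ when $c$'s intervene (already for $m_0=ac$ this fails). You also need $axb=acx$ to first convert the $c$'s of $m_0$ into $b$'s beyond the first $x$ (shuttling $a$'s with the commutations), then delete the $e$ with $aex=ax$, and finally undo the conversion, as the paper does.
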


\begin{proof}
Assume that there exist two elements $s,t$ of $S$ such that $s \neq t$ and $s$ is $\J$-related to $t$. Let $u$ and $v$ be words in $\{a,b,c,e,x \}$ representing $s$ and $t$. By the choice of $s$ and $t$ there exist possibly empty words $l_1, r_1$ and $l_2, r_2$ such that $l_1 u r_1$ represents $t$ and $l_2 v r_2$ represents $s$. In particular, this means that we can rewrite $v$ into $l_1 u r_1$ using the defining relations and similarly we can rewrite $u$ into $l_2 v r_2$. Thus, the words $u$ and $l_2 l_1 u r_1 r_2$  represent the same elements of $S$. By Lemma \ref{lem_clarissues} we have that $r_1 r_2$ is empty and $l_1 l_2 = e^t$ for $t \ge 0$.

If $t=0$, $u$ and $v$ represent the same element of $S$ which contradicts our choice of these words. If $t = 1$, then either $l_1$ or $l_2$ is empty and $u,v$ represent the same element of $S$ which contradicts our choice of these words. If $t > 1$, $e^t m_0 x m_1 x \cdots x m_k = m_0 x m_1 x \cdots x m_k$, we claim that $e m_0 x m_1 x \cdots x m_k = m_0 x m_1 x \cdots x m_k$ as well.

This follows from the fact that if our rewritings allow us to eliminate a non-zero number of $e$'s from the prefix $u_n = e^t m_0 x$, then they allow us to eliminate a single $e$ as well. To see this, consider the possible structures of $m_0$.

Case 1. $m_0$ does not contain $a$'s. In this case it is impossible to eliminate any $e$'s as the only rule to do so without $x$ on the left is $aex = ax$.

Case 2. $m_0$ contains at least one $b$ occurring before some non-$b$ letter of $m_0$.  In this case it is also impossible to eliminate any $e$'s as the only rule to do so without $x$ on the left is $aex = ax$, and $b$'s are not subject to any rewriting rules without $x$, meaning that we can neither push them out of the prefix nor get $e$'s past them to be deleted.

Case 3. $m_0$ has the form $g_0 a g_1 a \cdots a g_q$, $q>0$ where $g_j$ are possibly empty words in $\{c,e\}$. Denote $m_1 x \cdots x m_k$ by $w'$. We have $e u = e g_0 a g_1 a \cdots a g_q x w'$, and we can rewrite it into $e u = e a^{q} x b^p w'$, where $p$ is the number of occurrences of $c$'s in $g_0,\ldots, g_q$ combined using the commutativity of $a$ with both $c$ and $e$ and the rules $aex = ax$ and $acx = axb$. Evidently we can rewrite $e a^{q} x b^p w'$ into $a^{q} e x b^p w'$, delete $e$ and then retrace our previous actions backwards, resulting in $ g_0 a g_1 a \cdots a g_q x w' = u$.

Case 4. $m_0$ has the form $g_0 a g_1 a \cdots a g_q b^p$, $q>0$, $p>0$. If the $e$'s can be eliminated from $e^t m_0 = e^t g_0 a g_1 a \cdots a g_q b^p$, this means that we can remove $b$'s from the end of $e^t m_0$. The only way to this is by commuting $b$'s with $xex$ as no other rewriting rules allow us to remove $b$'s from the prefix. Thus, we must be able to rewrite $u$ into a word of the form  $g_0 a g_1 a \cdots a g_q b^p xex w''$ where $w''$ is an arbitrary word (note that the prefix is the same as with $u$ as the characters inside of it cannot interact with ones outside of it as long as $b$'s are not removed). Now we can rewrite this word into  $g_0 a g_1 a \cdots a g_q xex b^p  w''$, or the word $e u$ into $e g_0 a g_1 a \cdots a g_q xex b^p  w''$. Following Case 3, the word  $e g_0 a g_1 a \cdots a g_q xex b^p  w''$ can be rewritten into $g_0 a g_1 a \cdots a g_q xex b^p  w''$, which can be rewritten back into $u$.

To sum up, we have that $e u$ and $u$ represent the same element of $S$. Since $v$  and $l_1 u$ represent the same element of $S$, and $l_1$ is a power of $e$, this means that $u$ and $v$ represent the same element of $S$ which contradicts our choice of these words.

Thus, all cases are impossible and the initial assumption is incorrect.
\end{proof}

\begin{lemma}\label{lem_strongj}
Consider the semigroup $$T = Sg \langle a,b,c,d,e,x | axb = acx, ac = cd, xcd = xe, ed = ae, aex = ax, xexb = bxex \rangle.$$ We have $rst = s \implies r=1, t=1$ for $r,s,t \in T^1$ and in particular $T$ is $\J$-trivial.
\end{lemma}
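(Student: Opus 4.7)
The strategy adapts the template of Lemma \ref{lem_weakj}, using the extra letter $d$ in the presentation to sharpen the conclusion. The case $s = 1$ reduces to $rt = 1_{T^1}$; the global invariants $|w|_x$ and $|w|_a + |w|_d - |w|_b - |w|_c$ (checked on each defining relation) rule out non-trivial elements multiplying to the identity in $T$, so $r = t = 1$. I will therefore assume $s \ne 1$, fix word representatives $p, u, q \in \{a,b,c,d,e,x\}^*$ of $r, s, t$ with $u$ non-empty, and work with the word-level equation $puq \equiv u$ in $T$.

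The key first observation is that the substitution $d \mapsto a$ carries every defining relation of $T$ to one of $S$, so it determines a surjective homomorphism $\pi \colon T \to S$. To constrain $p$ and $q$ I combine $\pi$ with two refinements of $|w|_x$: the prefix quantity $|P(w)|_a + |P(w)|_d$, where $P(w)$ is the segment of $w$ before its first $x$, and the suffix quantity $|R(w)|_a + |R(w)|_d + |R(w)|_e$, where $R(w)$ is the segment after its last $x$. A direct relation-by-relation check shows that both quantities are preserved by the defining relations of $T$, whether applied internally to the prefix/suffix or across the first/last $x$. Comparing them on $u$ versus $puq$ forces $p$ and $q$ to be $x$-free, $p \in \{b,c,e\}^*$, and $q \in \{b,c\}^*$. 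Because $p$ and $q$ then contain no $d$, $\pi$ fixes them pointwise, so the induced equation $p \pi(u) q = \pi(u)$ in $S$ involves only words in $\{a,b,c,e,x\}^*$, and Lemma \ref{lem_clarissues} applies to yield $p = e^k$ for some $k \ge 0$ and $q$ empty. Hence $t = 1$ and $r = e^k$ in $T$.

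The main obstacle is then the last step: ruling out $k \ge 1$, i.e.\ showing $e^k u \ne u$ in $T$ for every non-empty word $u$. The corresponding equality can hold in $S$ (this is precisely why Lemma \ref{lem_weakj} only delivers $\J$-triviality rather than the stronger conclusion sought here), and the reason is that a leading $e$ in $S$ can be commuted past the $a$'s of the prefix via $ea = ae$ and then absorbed by $aex = ax$. This route is blocked in $T$: the only $T$-relation that can move an $e$ rightwards is $ed = ae$, which requires a $d$ immediately to the right of $e$ and deposits a non-removable $a$ to its left, so after any such step the first letter of the word becomes $a$ and no further leftward loss of $e$'s is possible. I will accordingly mirror the case analysis of Lemma \ref{lem_weakj} on the prefix $m_0$ of $u$, in each case showing that the leading $e$ of $eu$ cannot be destroyed: Case 1 (no $a$ or $d$ in $m_0$) traps the $e$ since no $a$ can ever appear before it; Case 2 (an early $b$ in $m_0$) is blocked by $b$'s being inert away from $x$; and in the delicate Cases 3 and 4 (the positive cases of the $S$-argument) any attempt to push the leading $e$ into an $aex$ pattern via $ed \to ae$ and $ac \leftrightarrow cd$ leaves $e$ sandwiched to the right of an $a$ with $c$'s or $e$'s rather than $x$ immediately after it, so no absorption of the $e$ is possible. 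This yields the needed contradiction with $e^k u = u$ and forces $k = 0$. $\J$-triviality then follows by the standard argument: if $s \mathrel{\J} s'$, then $s = u_1 u_2 s v_2 v_1$ for some $u_i, v_i \in T^1$, the lemma gives $u_1 u_2 = v_2 v_1 = 1$, hence $u_i = v_i = 1$, and $s = s'$.
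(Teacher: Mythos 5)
The first half of your argument is sound, and in fact slicker than the paper's route: the substitution $d \mapsto a$ does define a surjective homomorphism $\pi\colon T \to S$, and since $\pi$ is induced by a map on letters whose only preimage of $e$ is $e$ itself, applying Lemma \ref{lem_clarissues} to $\pi(p)\,\pi(u)\,\pi(q)=\pi(u)$ already forces $q$ to be empty and $p=e^k$ as words; your prefix/suffix counts are redundant for this purpose (and they tacitly assume $u$ contains an $x$, a case the $\pi$-route covers for free, since Lemma \ref{lem_clarissues} allows $x$-free $z$). The paper instead re-runs the counting invariants directly for the relations of $T$, so up to the conclusion ``$t=1$ and $r=e^k$'' you have a genuinely different and economical reduction.

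The gap is in the step that actually distinguishes $T$ from $S$, namely ruling out $k\ge 1$. Your mechanism for Cases 3--4, that the leading $e$ of $eu$ cannot be destroyed because after $ed\to ae$ it is followed by $c$'s or $e$'s rather than $x$, is simply false: for $u=dx$ (prefix $m_0=d$, which falls under these cases) one has $e\,dx \to aex \to ax$, so the leading $e$ is absorbed after a single $ed\to ae$ step. The correct reason why $e^k u\neq u$ in that situation is not survival of the $e$ but the fact that no word obtainable from $e^k u$ can begin with $d$, which is exactly what the paper establishes through its $ace$-form invariant (its Case 4 is a first-letter argument, not an ``$e$ is never absorbed'' argument). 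Moreover, even where the leading $e$ genuinely cannot be destroyed, that alone does not separate $e^k u$ from $u$ when $u$ itself begins with $e$: for $u=eax$ every word in the class of $eu$ still starts with $e$, and so does $u$, so you must control the entire leading structure, as the paper does with its ``product of $e^p$ with a word in $ace$-form'' invariant and the comparison of first letters/leading $e$-blocks. Finally, your case analysis is phrased for $eu$ only, whereas the statement you need is $e^k u\neq u$ for the specific $k$ produced, and in $T$ there is no analogue of the reduction from many $e$'s to a single $e$ used in Lemma \ref{lem_weakj} (that reduction relied on being able to delete $e$'s, which is precisely what fails here); the paper accordingly works with $e^p w$ for general $p$ throughout. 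As proposed, Cases 3--4 would not go through; they need to be replaced by closure-under-rewriting statements of the paper's kind ($b$-form and $ace$-form preservation) followed by the structural comparison.
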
 

\begin{proof}
Assume that there exist $r,s,t \in T^1$ such that $rst=s$ and either $r \neq 1$ or $t \neq 1$. Let $u,w,v$ be words in $\{a,b,c,d,e,x \}$ representing $r,s,t$ (in particular, $u$ and $v$ are not simultaneously empty). We have a finite sequence of words $w_0 = w, \ldots, w_n = u w v$ such that neighbouring words are different only by a single application of the defining relations of $T$. Note that none of the rewritings affect the existence of $x$'s inside of words, which means that $u,v$ must be free of $x$'s. There are two further possibilities.

If $w$ does not contain any $x$'s itself, then the only applicable relations are $ac = cd$ and $ae = ed$ which do not affect the length of the word, which means that $u,v$ must be empty, contradicting our choice of these words.

Otherwise, $w$ has the form $m_0 x m_1 x \cdots x m_k$ where $k \ge 1$ and $m_i$'s are possibly empty words in $\{a,b,c,d,e\}$. Denote by $u_i$ the prefix of $w_i$ up to the first occurrence of $x$ (in particular, $u_0 = m_0 x$ and $u_n = u m_0 x$). There are three ways our rewritings can affect them:

\begin{enumerate}
\item We apply a rewriting directly to $u_i$ or $u_{i+1}$ itself (so it must be one of the $axb = acx, ac=cd, ae = ed, aex = ax$);
\item We bring in an extra $b$ via $xexb = bxex$;
\item We remove an extra $b$ via $xexb = bxex$.
\end{enumerate}

Note that these rewritings preserve the total numbers of $a$'s and $d$'s, which means that $u_n$ and $u_0$ have the same total number of $a$'s and $d$'s in them.

Now we can apply similar analysis to the suffixes $v_i$ of $w_i$ starting with the last occurrence of $x$ (in particular, $v_0 =x m_k$ and $v_n = x m_k v$). There are three ways our rewritings can affect them:

\begin{enumerate}
\item We apply rewriting directly to $v_i$ or $v_{i+1}$ itself (so it must be one of the $axb = acx, ac=cd, ae = ed, xcd = xe$);
\item We bring in an extra $b$ via $xexb = bxex$;
\item We remove an extra $b$ via $xexb = bxex$.
\end{enumerate}

Note that these rewritings keep the sum $a_i + d_i + e_i$ constant, where $a_i,d_i$ and $e_i$ are the number of occurrences of the respective letter in $v_i$. In particular, this means that $v$ cannot contain $a,d$ or $e$ as otherwise $a_n +d_n + e_n > a_0 + d_0 + e_0$.

However, the difference between total numbers of $a$'s and $d$'s and the total number of $b$'s and $c$'s is a constant under all of the described rewritings, which means that $v$ cannot contain $b$'s or $c$'s either as $u$ does not contain $a$'s or $d$'s. This also means that $u$ cannot contain $b$'s or $c$'s, that is it is a power of $e$. Denote $u = e^p$. By our assumption $p >0$.

Consider the possible structures of $m_0$.

Case 1. $m_0$ does not contain $a$'s or $d$'s. In this case it is impossible to eliminate any $e$'s as the only rule to do so without $x$ on the left is $aex = ax$ and the only possibility to obtain an $a$ where there were none is via $ae = ed$ and $ac = cd$.

Case 2. $m_0$ starts with $b$. Let us say that a word in the alphabet $\{a,b,c,d,e\}$ has a $b$-form if it can be written as $b m' x w'$ where $m'$ does not contain $x$'s but contains at least one $a$ or $d$ and $w'$ is some word in $\{a,b,c,d,e\}$. We claim that any rewriting of a word in a $b$-form also has a $b$-form. It follows from the fact that there are no relations allowing to push $b$ to the right of anything but $x$, and here the first $b$ and $x$ are separated, as well as the fact that the total number of $a$'s and $d$'s in $m'$ is preserved under all rewritings.

Additionally, if a word in the alphabet $\{a,b,c,d,e\}$ is a product of $e^p$ and a word in a $b$-form, any rewritings of this word would also have the same structure. To see that, note that no rewritings of $e^p b m' x w'$ can change the prefix of word from being $e^p b$, meaning that they affect $b m' x w'$, which keep its $b$-form as noted above.

Since in this case $w$ has $b$-form and $e^p w$ is a product of $e^p$ and word in $b$-form, they cannot be rewritten into each other.

Case 3. $m_0$ starts with $a,c,e$. Let us say that a word in the alphabet $\{a,b,c,d,e\}$ has an $ace$-form if it can be written as $f m' x w'$ where $f \in \{a,c,e\}$, $m'$ does not contain $x$'s but contains at least one $a$ or $d$ if $f \neq a$ and $w'$ is some word in $\{a,b,c,d,e\}$. We claim that any rewriting of a word in an $ace$-form also has an $ace$-form. It follows from the fact that with such a first letter the only relations which could apply to a subword containing it are $axb =acx, ae=ed, ac=cd, aex = ax$ all of which preserve the required properties.

Additionally, if a word in the alphabet $\{a,b,c,d,e\}$ is a product of $e^p$ and a word in an $ace$-form, any rewritings of this word would also have the same structure. To see that, note that no rewritings of $e^p f m' x w'$ can change the prefix of a word from being $e^p f'$ $f' \in \{a,c,e\}$, and rewritings affecting $f m' x w'$ keep its $ace$-form as noted above.

Since in this case $w$ has an $ace$-form, and $e^p w$ is a product of $e^p$ and a word in an $ace$-form, they cannot be rewritten into each other.

Case 4. $m_0$ starts with $d$, i.e. $m_0 = dm$ for a word $m$. We claim that $e^p w$ cannot be rewritten into $w$ in this case as well. We can apply the results of the previous case for $e^{p-1} ew$, with $ew$ being in an $ace$-form, meaning that all rewritings of $e^{p-1} ew$ start with $e$ (if $p > 1$) or with $a,c$ or $e$ (if $p=1$). In all cases, they cannot start with $d$.

Thus, all cases are impossible and the initial assumption is incorrect.

\end{proof}

\begin{proposition}\label{prop_bigjcon}
The semigroups $$S = Sg \langle a,b,c,e,x | axb = acx, ac = ca, xca = xe, ea = ae, aex = ax, xexb = bxex \rangle$$ and $$T = Sg \langle a,b,c,d,e,x | axb = acx, ac = cd, xcd = xe, ea = de, aex = ax, xexb = bxex \rangle$$ are non-LEF.
\end{proposition}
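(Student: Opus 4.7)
The plan is to assume, for contradiction, that $S$ (respectively $T$) is LEF, and to exhibit a finite subset $H$ for which no approximating pair can exist. The argument parallels the non-$\J$-trivially-embeddable analysis of $Q$ in Section~2, with the crucial new ingredient being that the relation $xexb = bxex$ allows replacing the hypothesis of $\J$-triviality of the approximating semigroup by mere finiteness.

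I would first verify the derived identity $xaxb = xex$ in both $S$ and $T$: in $S$ by $xaxb = x(axb) = x(acx) = (xac)x = (xca)x = (xe)x = xex$, and in $T$ by the same sequence with $xac = xcd$ (using $ac = cd$) followed by $xcdx = xex$ (using $xcd = xe$). By analogous manipulations---pushing $a$'s through $c$'s via the respective commutation, then using $xca = xe$ or $xcd = xe$ and $aex = ax$---one derives $xa^k x b = xa^{k-1} x$ for all $k \geq 2$, iterating to the master identity $xa^k x b^k = xex$ for every $k \geq 1$. I would then take $H$ to include $xex$, the elements $xa^j x$ for $1 \leq j \leq M$ and $b^i xex$ for $0 \leq i \leq M$, together with all intermediate sub-products needed to invoke the multiplicative property in any approximating pair $(F_H, f_H)$, for a sufficiently large $M$ fixed in advance.

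Writing primes for $f_H$-images, the master identity transfers to $F_H$, and finiteness of $F_H$ supplies $n, K \geq 1$ with $a'^n = a'^{n+K}$; substitution yields $x'e'x' = x' a'^K x'$ in $F_H$, and telescoping with the commutation $x'e'x' b' = b' x'e'x'$ further produces the stabilizer identity $b'^K \cdot x'e'x' = x'e'x'$ along with the family of identifications $x' a'^{K-j} x' = b'^j \cdot x'e'x'$ for $0 \leq j \leq K-1$. Since $xa^{K-j} x$ and $b^j xex$ are distinct in $S$ (respectively $T$) by the invariant $|a| - |b| - |c|$ (respectively $|a| + |d| - |b| - |c|$) preserved by every defining relation, injectivity of $f_H$ will fail whenever some such pair lies entirely within $H$; by choosing $j \in [\max(1, K-M), M]$ this is guaranteed whenever $K \leq 2M$.

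The main obstacle is handling the remaining regime $K > 2M$, for which the injectivity of $f_H$ on $\{b^i xex\}_{i=0}^{M}$ forces the orbit of $x'e'x'$ under left multiplication by $b'$ to have size at least $M+1$, so that the naive stabilizer identity does not immediately locate a collision inside $H$. I would close this case by combining the stabilizer identity with an application of Lemma \ref{lem_strongj} (for $T$, where $b'^L x'e'x' = x'e'x'$ forces $b^L = 1$ in $T^1$, which is impossible) or Lemma \ref{lem_clarissues} (for $S$, where the same equation forces $b^L$ to be a power of $e$, again impossible since no defining relation identifies powers of $b$ with powers of $e$), using careful book-keeping of the commutative sub-semigroup $\langle b', x'e'x' \rangle \subseteq F_H$ to pull back the conclusion into $H$ despite the large orbit.
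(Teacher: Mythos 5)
There is a genuine gap, and it sits exactly where you flagged it: the regime $K>2M$. Your stabilizer identity $b'^{K}x'e'x'=x'e'x'$ is an identity in the finite semigroup $F_H$, whereas Lemma \ref{lem_strongj} and Lemma \ref{lem_clarissues} are statements about $T$ and $S$ themselves; they say nothing about $F_H$. An identity in $F_H$ can only be pulled back to $S$ or $T$ when all the elements involved lie in $H$ and their images are computed through the multiplicative property of Definition \ref{def_lef}, which for $b^{K}xex$ requires $K\le M$ --- precisely the range you have already handled. Since $K$ is the eventual period of $a'$ in an arbitrary approximating semigroup, it cannot be bounded in terms of the pre-chosen $M$, so "$b'^{K}x'e'x'=x'e'x'$ forces $b^{K}=1$ in $T^1$" (or "forces $b^{K}$ to be a power of $e$ in $S$") is not a legitimate inference, and no book-keeping inside the commutative subsemigroup $\langle b',x'e'x'\rangle\subseteq F_H$ can create one: a finite semigroup may perfectly well realise a $b'$-orbit of $x'e'x'$ with a tail longer than $M$, injectively matching the elements $b^{i}xex\in H$ for $i\le M$, without producing any collision inside $H$.

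The paper avoids the threshold problem entirely by keeping $H$ small and fixed and staying inside $F_H$: from $x'e'x'=x'a'^{\kappa}x'$ and the transferred commutation $x'e'x'b'=b'x'e'x'$ it runs a downward induction showing that every $x'a'^{\kappa-t}x'$ commutes with $b'$, ending with $x'a'x'b'=b'x'a'x'$; this contradicts injectivity on the single pair $xaxb=xex\neq bxax$, with no dependence on $\kappa$. Note that your own identifications already contain this: taking $j=K-1$ in $x'a'^{K-j}x'=b'^{j}x'e'x'$ gives the commutation of $x'a'x'$ with $b'$, so the repair is available from within your setup. However, it then requires the inequality $bxax\neq xaxb=xex$ in $S$ and in $T$, which your letter-count invariant $|a|-|b|-|c|$ does not detect (both sides have value $0$); the paper proves it separately, using $\J$-triviality of $S$ (Lemma \ref{lem_weakj}) and the stronger Lemma \ref{lem_strongj} for $T$, and your proposal does not address this point.
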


\begin{proof}
To see that $S$ is non-LEF, first note that we have 
$$
xex = xcax = xacx = xaxb \neq bxax.
$$

To demonstrate this, assume the opposite holds. If $bxax = xex$, there exists a finite sequence of words $w_0, \ldots, w_n$ with $w_0 = bxax$, $w_n = xex$ and neighbouring words different only by a single application of the defining relations of $S$. Let $i$ be the maximal index below $n$ such that $w_i$ starts with $b$. Since  $w_{i+1}$ does not start with $b$, we have $w_{i} = bxex u$ and $w_{i+1} = xex b u$ for some word $u$ as there are no other relations which can remove $b$ from the start of a word. This means that $xex b u$ and $xex$ represent the same element of $S$. However, since $S$ is $\J$-trivial, this would mean that $xex b = xex$ (otherwise they are distinct and $\J$-related). This, however, is incorrect as the rewriting rules in $S$ preserve the difference between the number of occurrences of $a$'s and the sum of occurrences of $b$'s and $c$'s. 

Note that for the same reason $xax \neq xex$.

Now we can consider the finite subset $$H = \{a,b,c,e,x,ax, ac, ae, xe,xa,xex,axb,xax,bxax,xexb\}$$ of $S$. Assume that there exist a finite semigroup $F_H$ and a map $f_H$ satisfying the requirements of Definition \ref{def_lef} for $H$. Denote $af_H$ by $a'$, $b f_H$ by $b'$, $c f_H$ by $c'$, $e f_H$ by $e'$ and $x f_H$ by $x'$. By the multiplication rules we have $a'x'b' = a'c'x', a'c' = c'a', x'c'a' = x'e', e'a' = a'e', a'e'x' = a'x', x'e'x'b' = b'x'e'x'$. In particular, we have $x' a' x' b' = x'a'c' x' = x'e'x'$ and $x' a^{\prime n} x' b' = x' a^{\prime n} c' x' = x'c' a^{\prime n}  x' = x' e' a^{\prime n-1}  x' = x' a^{\prime n-1}  x'$ for $n \ge 2$. Note that since $F_H$ is finite there exist finite positive powers $\eta$ and $\kappa$ such that $a^{\prime \eta} = a^{\prime \eta + \kappa}$. This means that $x' a^{\prime \eta} x' = x' a^{\prime \eta + \kappa} x'$. By multiplying both sides of the equation by $b^{\prime \eta}$ on the right we would arrive at $x' e' x' = x' a^{\prime \kappa} x'$. If $\kappa = 1$, this contradicts the injectivity of $f_H$. If $\kappa > 1$, we claim that it follows that all $x' a^{\prime n} x'$, $1 \le n \le \kappa$ commute with $b'$. We will prove it for $n = \kappa - t$ by induction on $t$.

The base. For $t=0$ it follows from the fact that $x' e' x'$ commutes with $b'$.

The step. Assume the statement holds for $t = t_0 -1$ and consider $t = t_0$, $0 < t_0 < \kappa$. We have $b' x' a^{\prime \kappa - t_0 } x' = b' x' a^{\prime \kappa - t_0 +1 } x' b'$, and by the induction hypothesis this is equal to $ x' a^{\prime \kappa - t_0 +1 } x'b' b' = x' a^{\prime \kappa - t_0 } x' b'$.

This is a contradiction as $x'a'x'$ does not commute with $b'$, since by the injectivity the images of $xaxb = xex$ and $bxax$ must be distinct.

Thus, the subset $H$ is non-embeddable and $S$ is non-LEF.

Similarly, $T$ is not LEF. Note that we also have 
$$
xex = xcdx = xacx = xaxb \neq bxax.
$$

To demonstrate this, assume the opposite holds. If $bxax = xex$, there exists a finite sequence of words $w_0, \ldots, w_n$ with $w_0 = bxax$, $w_n = xex$ and neighbouring words different only by a single application of the defining relations of $T$. Let $i$ be the maximal index below $n$ such that $w_i$ starts with $b$. Since  $w_{i+1}$ does not start with $b$, we have $w_{i} = bxex u$ and $w_{i+1} = xex b u$ for some word $u$ as there are no other relations which can remove $b$ from the start of a word. This means that $xex b u$ and $xex$ represent the same element of $T$. However, since $bu$ represents an element of $T$, this contradicts Lemma \ref{lem_strongj}. 

Note that for the same reason $xax \neq xex = xax b$.

Now we can consider the finite subset $$K = \{a,b,c,d,e,x,ax, ac, ae, xe,xex,axb,xax,bxax,xexb\}$$ of $T$. Assume that there exist a finite semigroup $F_K$ and a map $f_K$ satisfying the requirements of Definition \ref{def_lef} for $K$. Denote $af_K$ by $a'$, $b f_K$ by $b'$, $c f_K$ by $c'$, $d f_K$ by $d'$, $e f_K$ by $e'$ and $x f_K$ by $x'$. By the multiplication rules we have $a'x'b' = a'c'x', a'c' = c'd', x'c'd' = x'e', e'd' = a'e', a'e'x' = a'x', x'e'x'b' = b'x'e'x'$. In particular, we have $x' a' x' b' = x'a'c' x' = x'e'x'$ and $x' a^{\prime n} x' b' = x' a^{\prime n} c' x' = x'c' d^{\prime n}  x' = x' e' d^{\prime n-1}  x' = x' a' e' d^{\prime n-2}  x' = \ldots =  x' a^{\prime n-1}  x'$ for $n \ge 2$. Note that since $F_K$ is finite there exist finite positive powers $\eta$ and $\kappa$ such that $a^{\prime \eta} = a^{\prime \eta + \kappa}$. This means that $x' a^{\prime \eta} x' = x' a^{\prime \eta + \kappa} x'$. By multiplying both sides of the equation by $b^{\prime \eta}$ on the right we would arrive at $x' e' x' = x' a^{\prime \kappa} x'$. If $\kappa = 1$, this contradicts the injectivity of $f_K$. If $\kappa > 1$, we claim that it follows that all $x' a^{\prime n} x'$, $1 \le n \le \kappa$ commute with $b'$. We will prove it for $n = \kappa - t$ by induction on $t$.

The base. For $t=0$ it follows from the fact that $x' e' x'$ commutes with $b'$.

The step. Assume the statement holds for $t = t_0 -1$ and consider $t = t_0$, $0 < t_0 < \kappa$. We have $b' x' a^{\prime \kappa - t_0 } x' = b' x' a^{\prime \kappa - t_0 +1 } x' b'$, and by the induction hypothesis this is equal to $ x' a^{\prime \kappa - t_0 +1 } x'b' b' = x' a^{\prime \kappa - t_0 } x' b'$.

This is a contradiction as $x'a'x'$ does not commute with $b'$, since by the injectivity the images of $xaxb = xex$ and $bxax$ must be distinct.

Thus, the subset $H$ is non-embeddable and $T$ is non-LEF.
\end{proof}

An additional notion is required to provide the new sufficient conditions for being LWF.

\begin{definition}
    Let $S$ be a semigroup generated by a set $X$, $H$ a finite subset of $S$ and $W$ a collection of some words, representing elements of $S$. We say that a product $w_1 \cdots w_k$, $k \ge 1$, $w_i \in W$ is {\em pre-accurate} if

    \begin{enumerate}
\item The element represented by $w_1 \cdots w_k$ belongs to $H$;

\item One of the following holds:

\begin{itemize}
\item[a.] $k=1$;
\item[b.] There exists an index $j$, $1 \le j <k$ such that $w_1 \cdots w_j$ is pre-accurate and $w_{j+1} \cdots w_k$ is pre-accurate.
\end{itemize}
\end{enumerate}
\end{definition}

\begin{proposition}\label{prop_lwfcrit}
    Let $S$ be an infinite semigroup generated by a finite set $X$ and $\{S_m\}_{m=1}^{\infty}$ be a sequence of LEF semigroups generated by the same set. For every $n \ge 1$ denote by $W_n$ the union of words $X \cup X^2 \cdots \cup X^n$, denote by $H_n$ the set of elements of $S$ represented by elements of $W_n$ and by $L_n$  the set of pre-accurate words with respect to $H_n$ and $W_n$. If for all $n \ge 1$ there exists $m \ge 1$ such that 

    \begin{enumerate}
        \item $L_n$ represents a finite subset $H'_n$ of $S_m$;
        \item For $u,v \in L_n$ such that $u$ and $v$ represent the same element in $S_m$, they also represent the same element in $S$;
    \end{enumerate}

    then the semigroup $S$ is LWF.
\end{proposition}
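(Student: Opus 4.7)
The plan is to leverage the LEF property of a single $S_m$ together with the factorisation arising from condition~(2) to build the required wrapping data. Given a finite subset $H \subseteq S$, pick $n_0$ with $H \subseteq H_{n_0}$ and set $n = 2n_0$, so that every product of two elements of $H$ lies in $H_n$. Apply the hypothesis to obtain $m$ satisfying conditions~(1) and~(2) for this $n$. Condition~(2) says that any two words of $L_n$ which represent the same element in $S_m$ also represent the same element in $S$, so evaluation in $S$ factors through evaluation in $S_m$ and yields a well-defined function $\pi : H'_n \to H_n \subseteq S$.

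Since $S_m$ is LEF and $H'_n \subseteq S_m$ is finite, apply Definition~\ref{def_lef} to obtain an approximating pair $(F, f)$ for $H'_n$ in $S_m$, where $F$ is a finite semigroup and $f$ is injective and respects products whose inputs and outputs all lie in $H'_n$. Set $D_H = F$, and define $d_H: F \to S$ by $d_H(\alpha) = \pi(f^{-1}(\alpha))$ for $\alpha \in f(H'_n)$ and by $d_H(\alpha) = s_0$ otherwise, where $s_0$ is a fixed element of $S \setminus H$ (which exists because $S$ is infinite). For each $h \in H$ pick $w \in W_n$ representing $h$; then $w \in L_n$, and if $w$ represents $h' \in H'_n$ in $S_m$ we get $d_H(f(h')) = \pi(h') = h$, giving $H \subseteq D_H d_H$.

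For the multiplicative requirement, suppose $x', y' \in F$ satisfy $d_H(x'), d_H(y') \in H$. Since $s_0 \notin H$, both $x'$ and $y'$ lie in $f(H'_n)$, so write $x' = f(h_1)$, $y' = f(h_2)$ with $h_i \in H'_n$, and pick $w_1, w_2 \in L_n$ representing $h_1, h_2$ respectively in $S_m$, for which $d_H(x')$ and $d_H(y')$ are the elements of $S$ that $w_1$ and $w_2$ represent. The concatenation $w_1 w_2$ represents the element $d_H(x') d_H(y') \in H \cdot H \subseteq H_n$ in $S$ and splits into the pre-accurate pieces $w_1, w_2$, so it is itself pre-accurate and lies in $L_n$. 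Hence $h_1 h_2 \in H'_n$, the partial homomorphism property of $f$ gives $x' y' = f(h_1 h_2)$, and applying condition~(2) to $w_1 w_2$ yields $d_H(x' y') = \pi(h_1 h_2) = d_H(x') d_H(y')$, as required.

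The main obstacle is this final closure step: what makes the argument run is precisely that concatenating two pre-accurate words whose joint product still represents an element of $H_n$ produces a pre-accurate word, which is the very purpose of the recursive clause in the pre-accuracy definition. The choice $n \ge 2n_0$ ensures the relevant product lands in $H_n$, and without this closure one could not transfer the partial multiplication from $f$ on $S_m$ down to a compatible wrapping map $d_H$ on $D_H$.
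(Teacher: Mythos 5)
Your proposal is correct and follows essentially the same route as the paper: double the index so products of elements of $H$ land in $H_{2n}$, use condition (2) to get a well-defined evaluation map $H'_{2n}\to S$, take an LEF approximating pair for $H'_{2n}$, and define the wrapping map as $f^{-1}$ composed with evaluation, sending everything else to a point outside $H$. The closure step you highlight (concatenation of pre-accurate words whose product stays in $H_{2n}$ is pre-accurate) is exactly the argument used in the paper.
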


\begin{proof}
    Let $H$ be a finite subset of $S$. There evidently exists an index $n$ such that $H \subseteq H_n$.

    Choose $m$ corresponding to $2n$ according to the given assumption and choose $s \in S \setminus H_{2n}$. Denote by $r$ the natural surjective map between the elements of $H'_{2n}$ and $H_{2n}$ which sends an element of $S_m$ to the element of $S$ represented by the same word in $L_{2n}$ (the existence of such a map is guaranteed by Item 2). Let $F$ be a finite semigroup and $f$ be a map approximating the finite subset $H'_{2n}$ in the sense of Definition \ref{def_lef}. Define the map $d: F \rightarrow S$ as follows:

    \begin{align*}
 x'' d & = \begin{cases} (x'') f^{-1} r, & \text{if }x''f^{-1} \in H'_{2n} \text{(i.e. defined)}, \\ s & \text{otherwise}.  \end{cases}
\end{align*}

    We claim that $F$ and $d$ satisfy Definition  \ref{def_lwf} for the semigroup $S$ and the set $H$. It is straightforward to see that $Fd \supseteq H_{2n} \supseteq H_n \supseteq H$ as the pre-image of $f$ is $H'_{2n}$. Consider $x'', y'' \in F$ such that $x'' d, y'' d \in H$. Denote $x'' f^{-1}$ by $x'$ and  $y'' f^{-1}$ by $y'$. We have $x', y' \in H'_{2n}$, meaning that they are represented by pre-accurate words from $L_{2n}$. However, since $x'r, y'r \in H \subseteq H_n$, the product of these words represents an element from $H_{2n}$ in $S$, meaning that it is pre-accurate itself and $x'y'$ also belongs to $H'_{2n}$ with $(x'r) (y'r) = (x'y')r$. Thus, $x'' y'' = (x'y') f$ and $(x'' d) (y''d) = (x' r) (y'r) = (x' y') r = (x'' y'') d $.
\end{proof}

With the result above, we can return to the two original examples.

\begin{proposition}
The semigroup $T  = Sg\langle a,b,c,d,e,x | axb = acx, ac =
cd, xcd = xe, ea = de, aex = ax, xexb = bxex \rangle$ is LWF.
\end{proposition}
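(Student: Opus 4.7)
My plan is to invoke Proposition~\ref{prop_lwfcrit} with a suitable sequence $\{S_m\}_{m \ge 1}$ of finite (hence automatically LEF) semigroups generated by $\{a,b,c,d,e,x\}$. The invariance of the number of occurrences of $x$ under each of the six defining relations of $T$ (a routine case check) means that the $x$-count descends to $T$-classes and to any quotient, which is the single $T$-invariant I will exploit.

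For each $m$, I would take $S_m = T^0 / I_m$, where $I_m$ is generated by the Rees-type collapse sending every element of $x$-count exceeding $m$ to $0$, together with relations that render $S_m$ finite (for instance, $y^{m+1}=0$ for each $y \in \{a,b,c,d,e\}$, chosen deliberately to \emph{avoid} introducing periodic identifications among non-zero elements, since any such periodicity would reactivate the non-LEF obstruction of Proposition~\ref{prop_bigjcon} inside $S_m$). Finiteness of $S_m$ is then established through a terminating locally confluent rewriting system, modelled on the one displayed for $Q$ in the previous section but enhanced with rules for $d$, for the commutation $xexb = bxex$, and for the new zero-collapse rules; an appendix-style case check for local confluence is required.

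Condition~(1) of Proposition~\ref{prop_lwfcrit} is then automatic. For condition~(2), I would choose $m$ sufficiently large relative to $n$ (say $m \ge n$) and aim to show that the natural projection $T \to S_m$ is injective on $H_n$, hence a fortiori on $L_n$. The crux is that every element of $H_n$ has a representative of length at most $n$, whose $x$-count is at most $n \le m$ (escaping the Rees collapse) and in which each generator appears at most $n \le m$ times (escaping $y^{m+1}=0$). The delicate point is to ensure that no \emph{other} representative of the same $T$-class triggers a collapse rule: this requires verifying that for each $T$-class in $H_n$ the short representative is canonical and that the $T$-rewritings connecting it to any other representative never inflate a letter-count beyond $n$.

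The main obstacle, as in the non-LEF proof of Proposition~\ref{prop_bigjcon}, is the relation $xexb = bxex$ together with the cascade $xa^k x b = xa^{k-1}x$ obtained from $axb = acx$, $ac = cd$ and $xcd = xe$: these are precisely what obstructs a genuine finite embedding. However, this transport mechanism only operates in the presence of at least two $x$'s framing an $xex$-pattern, so within $H_n$ its iteration is controlled by the $x$-count, which is bounded by $n$. I would make this precise by exhibiting explicit normal forms for $H_n$-elements using a rewriting system analogous to the one in the $Q$ proof (extended to $d$ and to the $b$-hops across $xex$-blocks) and arguing, by induction on $x$-count, that the $b$-transport can add at most a bounded number of $b$'s and $c$'s to any representative. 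Once injectivity of $T \to S_m$ on $H_n$ is in hand for $m = m(n)$, Proposition~\ref{prop_lwfcrit} yields that $T$ is LWF.
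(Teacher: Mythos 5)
Your overall framework (invoking Proposition \ref{prop_lwfcrit}) matches the paper's, but your choice of approximating sequence is fatally flawed. You take $S_m$ to be a finite quotient of $T^0$, so all six defining relations of $T$ still hold in $S_m$, on top of collapse relations such as $a^{m+1}=0$. But in $T$ one has the cascade $xa^kxb = xa^{k-1}x$ for $k \ge 2$ and $xaxb = xex$ (this is exactly the computation used in Proposition \ref{prop_bigjcon}), and these identities persist in every quotient. Hence in your $S_m$ the relation $a^{m+1}=0$ forces $xa^m x = xa^{m+1}xb = 0$, and inductively $xa^kx = 0$ for all $k\ge 1$ and $xex = xaxb = 0$. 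So $xax$ and $xex$, which are distinct elements of $H_3 \subseteq H_n$ and are themselves pre-accurate words, get identified in $S_m$ for \emph{every} $m$: the natural projection is never injective on $H_n$ and condition (2) of Proposition \ref{prop_lwfcrit} fails. Choosing a different finiteness device cannot repair this: Proposition \ref{prop_bigjcon} shows that the subset $K$ (contained in $H_4$) is non-embeddable, so no homomorphism from $T$ to a finite semigroup is injective on $K$, and therefore no finite quotient of $T$ can satisfy condition (2) once $n \ge 4$. Your claim that the transport mechanism ``only operates in the presence of at least two $x$'s framing an $xex$-pattern'' and is thus controlled by the $x$-count is also mistaken: the cascade acts on the words $xa^kx$, which contain exactly two $x$'s, and is driven by the exponent of $a$ -- precisely what your own finiteness relations truncate.

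The idea you missed is that Proposition \ref{prop_lwfcrit} does not require the $S_m$ to be quotients of $T$, nor to satisfy any of its relations; they only need to be LEF semigroups generated by the same set. The paper accordingly takes $D_m$ to be the free semigroup on $\{a,b,c,d,e,x\}$ modulo the ideal of all words of length greater than $m$. Condition (2) then holds trivially once $m$ exceeds the length of every word in $L_n$, because two words of length at most $m$ are equal in $D_m$ only if they are literally the same word. The entire content of the proof is the finiteness of $L_n$: by Lemma \ref{lem_strongj} ($rst = s$ in $T^1$ forces $r = t = 1$), the elements of $T$ arising along any decomposition of a pre-accurate word into shorter pre-accurate factors are pairwise distinct, so the decomposition depth is bounded by $|H_n| < 6^{n+1}$ and every word in $L_n$ has length at most $n2^{6^{n+1}}$; taking $m = n2^{6^{n+1}}$ completes the argument. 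No rewriting system, no normal forms, and no injectivity of any map from $T$ to a finite semigroup are needed (indeed the latter is impossible).
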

\begin{proof}
Consider the sequence of semigroups $\{D_m\}_{m=1}^{\infty}$ defined by presentations $$Sg\langle a,b,c,d,e,x\rangle / I_m,$$ where $I_m$ is the ideal of the free semigroup on $a,b,c,d,e,x$ consisting of all words of length greater than $m$. We claim that this sequence satisfies the criteria of Proposition \ref{prop_lwfcrit}.

Firstly, each of $D_m$ is LEF as they are finite.

Secondly, choose $n \ge 1$ and consider the set $H_n$ of elements of $T$ represented by words of length at most $n$ and the corresponding set of pre-accurate words $L_n$. Set $m = n 2^{6^{n+1}}$.

Note that $L_n$ is finite. To see this, take a pre-accurate word $w$ of length greater than $n$. It must be a product of two non-empty pre-accurate words $w_0$ and $w_1$. Similarly, if $w_i$, $i \in \{0,1\}$, has a length greater than $n$ it must be a product of pre-accurate  words $w_{i0}$ and $w_{i1}$. We can continue this process until all the factors have a length $n$ or less. It should be noted that in the sequence $w, w_{i_0}, w_{i_0 i_1}, \ldots , w_{i_0 i_1 \ldots i_k}$ all of the corresponding elements of $T$ must be distinct by Lemma \ref{lem_strongj}. This means that such decomposition goes down by no more than $|H_{n}| < 6^{n+1}$ levels, and thus the length of $w$ is bounded by $n 2^{6^{n+1}}$.

Since $D_m$ is finite, $L_n$ represents a finite subset of $D_m$, and by our choice of $m$ no two words of $L_n$ represent the same element in $D_m$. This means that $T$ is LWF.
\end{proof}

\begin{proposition}
The semigroup $S  = Sg\langle a,b,c,e,x | axb = acx, ac =
ca, xca = xe, ea = ae, aex = ax, xexb = bxex \rangle$ is LWF.
\end{proposition}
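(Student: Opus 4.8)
The plan is to invoke the criterion of Proposition~\ref{prop_lwfcrit} with the finite generating set $X=\{a,b,c,e,x\}$, and for each $n$ to let $W_n, H_n, L_n$ be as in that statement. The first observation is a simplification of what must be checked: for any \emph{finite} semigroup $S_m$, the set $L_n$ automatically represents a finite subset of $S_m$, so Item~1 is free and only Item~2 carries content. Moreover, every pre-accurate word represents, by definition, an element of the finite set $H_n$. Hence if I can exhibit, for each $n$, a finite quotient $\pi_m\colon S\to S_m$ (generated by the images of $X$, and finite, so LEF) that is injective on $H_n$, then Item~2 holds: two words $u,v\in L_n$ represent elements $u_S,v_S\in H_n$, and $\pi_m(u_S)=\pi_m(v_S)$ forces $u_S=v_S$ by injectivity, i.e.\ $u$ and $v$ agree already in $S$. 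Thus it suffices to show that $S$ is residually finite (equivalently, that each $H_n$ is separated by a single finite quotient); these quotients then form the sequence required by Proposition~\ref{prop_lwfcrit}, and LWF follows.

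The reason this is genuinely harder than the case of $T$, and why its proof does not transfer verbatim, is that $S$ exhibits $e$-absorption. Indeed $e\,ax = a\,e\,x = ax$ in $S$ (using $ea=ae$ and $aex=ax$), and more generally $e^{s}u=u$ for every $u$ of the absorbing type isolated in the proof of Lemma~\ref{lem_weakj} (cf.\ also Lemma~\ref{lem_clarissues}). Consequently $e^{s}ax\in L_n$ for all $s$, so $L_n$ is \emph{infinite}, and the free-truncation semigroups $D_m=\{a,b,c,e,x\}^{*}/I_m$ that settled the case of $T$ now violate Item~2: for large $s$ the words $e^{s}ax$ and $e^{s}w$, with $w$ representing a \emph{different} absorbing element of $H_n$, both exceed length $m$ and are sent to the zero of $D_m$, although they are distinct in $S$. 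In any finite approximant the infinite absorbing families must collapse; the point is to force these collapses to be exactly the $S$-valid ones, which is only guaranteed when the approximant realises the defining relations of $S$, that is, is a quotient of $S$.

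I would therefore construct the quotients $S_m$ in the same spirit as the semigroups $F_n$ in the proof that $Q$ is LEF: adjoin a zero, impose periodicity relations $a=a^{N+1},\,b=b^{N+1},\,c=c^{N+1},\,e=e^{N+1}$ for a suitably large $N=N(n)$, and send to zero every word whose number of maximal $b$/$x$-blocks (the analogue of the quantity $s(w)$ used for $Q$) exceeds a fixed bound, so that only finitely many normal forms survive. Finiteness then follows exactly as for $F_n$, the over-complex words forming an ideal and the remainder admitting boundedly many normal forms; faithfulness on $H_n$ is obtained from a terminating, locally confluent rewriting system via Newman's lemma, guaranteeing that the normal forms of the length-$\le n$ representatives persist and remain pairwise distinct. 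The main obstacle is precisely this confluence-and-faithfulness step: one must verify that the commutation relation $xexb=bxex$, which has no counterpart in $Q$ and generates new overlapping critical pairs, resolves compatibly with the $e$-absorption rules, and that $N$ is chosen large enough that no two elements of $H_n$ are accidentally identified by the periodicity. Granting this, $S$ is residually finite, each $H_n$ is separated by some $S_m$, and Proposition~\ref{prop_lwfcrit} yields that $S$ is LWF.
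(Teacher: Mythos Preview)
Your reduction contains a fatal error. You propose to verify Proposition~\ref{prop_lwfcrit} by exhibiting, for each $n$, a \emph{finite quotient} of $S$ that is injective on $H_n$, and you explicitly note that this amounts to showing $S$ is residually finite. But residual finiteness implies LEF (the quotient map restricted to $H_n$ is itself an approximating pair), and Proposition~\ref{prop_bigjcon} proves that $S$ is \emph{not} LEF. Hence $S$ is not residually finite, and no such family of quotients can exist. In particular the construction you sketch---adjoining periodicity relations and truncating by block count to obtain finite semigroups that satisfy all defining relations of $S$ and still separate $H_n$---cannot succeed, no matter how the confluence analysis is carried out. The entire purpose of this example is to produce an LWF semigroup that is not LEF, so any argument factoring through residual finiteness is doomed from the start.

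The paper's route avoids this trap by using the full flexibility of Proposition~\ref{prop_lwfcrit}: the auxiliary semigroups $S_m$ need only be LEF and generated by $X$; they need not be finite, and they need not be quotients of $S$. The paper takes $S_m = Sg\langle a,b,c,e,x \mid e^m = e\rangle$, infinite semigroups carrying \emph{none} of the defining relations of $S$ apart from a periodicity on $e$; these are shown to be residually finite, hence LEF. Item~1 (finiteness of $H'_n$) is obtained not trivially but by bounding the $e$-free length $\bar{l}_e$ of pre-accurate words via Lemma~\ref{lem_clarissues}. Item~2 is then a direct combinatorial argument that any block $e^k$ with $k>n$ inside a word representing an element of $H_n$ can already be reduced in $S$, so that $e^m=e$-equivalent words in $L_n$ are $S$-equal. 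The conceptual point you missed is that Item~2 is a condition on \emph{words}, comparing their values in two unrelated semigroups $S$ and $S_m$; it neither requires nor suggests a homomorphism $S\to S_m$.
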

\begin{proof}
Consider the sequence of semigroups $\{S_m\}_{m=1}^{\infty}$ defined by presentations $$Sg\langle a,b,c,e,x|e^m = e\rangle.$$ We claim that this sequence satisfies the criteria of Proposition \ref{prop_lwfcrit}.

Firstly, each of $S_m$ is LEF. To see this, pick two elements $s,t \in S_m$. There exist words $u,v \in \{a,b,c,e,x\}^*$ representing these elements. Denote by $\bar{l}_e(w)$ the length of a word obtained from a given word $w \in \{a,b,c,e,x\}^*$ by removing all $e$'s. The natural homomorphism from $S_m$ onto the finite quotient $S_m / I_m$, where $I_m$ is the ideal consisting of all elements represented by words $w$ with $\bar{l}_e(w) >\bar{l}_e(u)  + \bar{l}_e(v)$, separates elements $s$ and $t$. This is due to the fact that neither of them lie in the ideal, meaning that $S_m$ is residually finite and, as a consequence, LEF.

Secondly, choose $n \ge 1$ and consider the set $H_n$ of elements of $S$ represented by words of length of at most $n$ and the corresponding set of pre-accurate words $L_n$. Set $m = n+1$.

To understand the structure of $L_n$, take a pre-accurate word $w$ of a length greater than $n$. It must be a product of two non-empty pre-accurate words $w_0$ and $w_1$. Similarly, if $w_i$, $i \in \{0,1\}$ has a length greater than $n$ it must be a product of pre-accurate  words $w_{i0}$ and $w_{i1}$. We can continue this process until all the factors have a length $n$ or less. It should be noted that by Lemma \ref{lem_clarissues} for words $p,z,q \in \{a,b,c,e,x\}^*$ with non-empty $z$ it holds that $z,pzq $ represent the same element of $S$ implies that $q$ is empty and $p$ is a power of $e$.

This allows us to prove that $\bar{l}_e(L_n)$ is finite. Consider a pre-accurate word $w$ with $\bar{l}_e(w) > n$. In the sequence $w, w_{i_0}, w_{i_0 i_1}, \ldots , w_{i_0 i_1 \ldots i_k}$ any two elements are equal if and only if the longer one can be obtained from the shorter one by multiplying it by a power of $e$ on the left, meaning that in the sequence $\bar{l}_e(w), \bar{l}_e(w_{i_0}), \bar{l}_e(w_{i_0 i_1}), \ldots , \bar{l}_e(w_{i_0 i_1 \ldots i_k})$ the number of distinctive elements is less than $|H_n|  < 5^{n+1}$. Since $\bar{l}_e (w_{i_0 i_1 \ldots i_j}) = \bar{l}_e (w_{i_0 i_1 \ldots i_j 0}) + \bar{l}_e (w_{i_0 i_1 \ldots i_j 1})$, we have $\bar{l}_e(w) = \bar{l}_e(w_0) +  \bar{l}_e(w_1)  = \bar{l}_e(w_{00}) +  \bar{l}_e(w_{01})  + \bar{l}_e(w_{10}) +  \bar{l}_e(w_{11}) = \ldots \le n (2^{5^{n+1}})$.

Since there is a finite number of elements of $S_m$ represented by words $w$ with a given value of $\bar{l}_e(w)$, this means that $L_n$ represents a finite number of elements of $S_m$. 

We can also demonstrate that a word  $w$ of the form $u e^k v$ where $k > n$ and $u,v \in \{a,b,c,e,x\}^*$ represents an element of $H_n$ if and only if all $u e^z v$, $0 \le z \le k$ represent the same element. Consider the shortest word $w'$ which represents the same element as $w$. There exists a chain of rewritings $w = w_0, \ldots, w_q = w'$ with rules corresponding to the semigroup presentation of $S$.

If $w'$ does not contain any $x$'s, the only applicable relations are $ea = ae$ and $ac = ca$ which do not change the length of the word and cannot lead to $w$ as its length is at least $k>n$.

Write $w = m_0 x m_1 x \cdots x m_t$ and $w' = m'_0 x m'_1 x \cdots x m'_t $ where $t\ge 1$ and $m_i$'s are possibly empty words in the alphabet $\{a,b,c,e\}$. Assume that $m_j$ contains $e^k$ and denote by $r_i$ the subword of $w_i$ situated between the $j$th and $j+1$st occurrence of $x$. Note that $r_q$ contains strictly less than $|w'| \le n$ occurrences of $e$, meaning that we can remove at least two instances of $e$ from $r_0$ and $e^k$ in particular. 

The only relations which allow us to remove $e$'s from $e^k$ are $xca = xe$ and $aex = ax$.  Note that if the former relation is used on the rewriting step $i$, the prefix of $r_j$ for $j > i$ stays as $c$ unless we use $xca \rightarrow xe$ again (there are no other rules which allow to change $xc$ into anything else). Thus, if we were able to remove at least two $e$'s, $aex = ax$ was also used. However, we can move $e$'s  only by $ae = ea$, meaning that in order to apply this relation some $r_0$ must have suffix $y e^k z$ where

\begin{itemize}
\item $y$ or $z$ has at least one $a$;
\item $y, z$ does not contain any $b$'s;
\item $z$ does not contain $c$ with $e$ to the right of it.
\end{itemize}

Otherwise we either would not be able to find $a$ to eliminate a single $e$ (if there is one, we can push it to the end since $a$ commutes with $c$ and $e$) or push $e$ to the right (we cannot get it past $b$'s at all and if we cannot remove $c$'s which require them to be pushed to the right, we cannot get it past those too). However, this suffix allows us to eliminate any number of $e$'s from $e^k$: first, we push $a$ to the rightmost position, then use it to eliminate any $c$'s in $z$ by commutativity with $a$ and $acx \rightarrow axb$, and then freely remove $e$'s using commutativity with $a$ and $aex \rightarrow ax$. Afterwards we can return the word to the ``initial state" with regards to all letters other than $e$. The latter, in particular, means that $uv$ can be obtained by rewritings from $w$, that is it represents the same element of $S$.

Now, consider $u,v \in L_n$ representing the same element of $S_m$. Since they must be obtained from each other via rewritings corresponding to $e^m = e$, they have the forms $u = y_0 e^{\kappa_1} y_1 \cdots e^{\kappa_h}$ and $v = y_0 e^{\kappa'_1} y_1 \cdots e^{\kappa'_h}$, with $h \ge 0$, $\kappa_i, \kappa'_i >0$, $\kappa_i = \kappa'_i \ (\mod n)$ and $y_i$ being non-empty words in $a,b,c,x$. By the observation above, both of them represent the same element of $H_n$, that is the element given by the word $y_0 e^{\kappa''_1} y_1 \cdots e^{\kappa''_h}$ where $0 \le \kappa''_i <n$ and $\kappa_i = \kappa'_i = \kappa''_i \ (\mod n)$.

Thus, Proposition \ref{prop_lwfcrit} applies and $S$ is LWF.
\end{proof}

\newpage

\section*{Appendix A}

This table demonstrates results of rewritings applied to a word composed of two intersecting left-hand parts of the rules. We specify the second rule when it is not immediately recoverable from the form of the word and reserve the first column for extra conditions.

\begin{center}
\begin{tabular}{|c|c|c|c|c|}
\hline
special & $t$ & $t_1$ & $t_2$ & $t_0$ \\
\hline
\hline
\multicolumn{5}{|c|}{first rule $xb \rightarrow cx$}\\
\hline
\hline
- & $   a^\alpha c^\beta e^\gamma x b  $ &  $  a^\alpha c^\beta e^\gamma c x  $ &   $  a^\alpha c^\beta x b  $ &  $ a^\alpha c^{\beta+1} x  $ \\
\hline
- & $x e^{\gamma} x b  $ &  $  x e^{\gamma} c x  $ &   $   x e x b  $ &  $ x c e x    $ \\
\hline
- & $x c^\beta e^\gamma x b$ &  $   x c^\beta e^\gamma x c  x    $ &   $  x  c^\beta e x b  $ &  $ x c^{\beta+1} e x  $ \\
\hline
$\alpha > \beta+1$ & $ x a^\alpha c^\beta e^\gamma x b  $ &  $  x a^\alpha c^{\beta} e^\gamma c x   $ &   $   x a^{\alpha-\beta} x b  $ &  $ x a^{\alpha-\beta-1} x    $ \\
\hline
$\alpha = \beta+1$ & $  x a^\alpha c^\beta  e^\gamma x b  $ &  $    x a^\alpha c^\beta  e^\gamma c  x  $ &   $   x a x b  $ &  $ x e x    $ \\
\hline
$\alpha = \beta$  & $  x a^\alpha c^\beta e^\gamma x b $ &  $   x a^\alpha c^\beta  e^\gamma  c  x  $ &   $   x e x b  $ &  $ x c e x    $ \\
\hline
$\alpha < \beta$  & $  x a^\alpha c^\beta e^\gamma x b $ &  $   x a^\alpha c^\beta  e^\gamma c  x  $ &   $   x c^{\beta-\alpha} e x b  $ &  $ x c^{\beta-\alpha+1}e x    $ \\
\hline
\hline
\multicolumn{5}{|c|}{first rule $ca \rightarrow ac$}\\
\hline
\hline
-& $ eca $ &  $  eac  $ &   $ cea  $ &  $ ace $ \\
\hline
$\alpha \ge \beta$ & $ x a^\alpha c^\beta a  $ &  $  x a^\alpha c^{\beta-1} a c  $ &   $   x a^{\alpha - \beta} e^\beta a    $ &  $ x a^{\alpha-\beta+1} e^{\beta}     $ \\
\hline
$\alpha < \beta$ & $ x a^\alpha c^\beta a  $ &  $  x a^\alpha c^{\beta-1} a c  $ &   $   x c^{\beta-\alpha} e^{\alpha} a    $ &  $ x c^{\beta-\alpha-1} e^{\alpha+1}     $ \\
\hline
- & $  c a^\alpha c^\beta e^\gamma x   $ &  $  a c a^{\alpha-1} c^\beta e^\gamma x  $ &   $ c a^\alpha c^\beta x   $ &  $ a^\alpha c^{\beta+1} x  $ \\
\hline
\hline
\multicolumn{5}{|c|}{first rule $ea \rightarrow ae$}\\
\hline
\hline
- & $  e a^\alpha c^\beta e^\gamma x   $ &  $  a e a^{\alpha-1} c^\beta e^\gamma x  $ &   $ e a^\alpha c^\beta x   $ &  $ a^\alpha c^{\beta} x  $ \\
\hline
\hline
\multicolumn{5}{|c|}{first rule $x a^\alpha c^\beta \rightarrow x a^{\alpha - \beta} e^\beta$, $0< \beta \le \alpha$}\\
\hline
\hline
\multicolumn{5}{|c|}{second rule $x a^\alpha c^{\beta'} \rightarrow x a^{\alpha - \beta'} e^{\beta'}$, $0< \beta' \le \alpha$, $\beta' < \beta$}\\
\hline
- & $ x a^\alpha c^\beta  $ &  $  x  a^{\alpha - \beta} e^\beta   $ &   $   x a^{\alpha - \beta'} e^{\beta'}  c^{\beta-\beta'}    $ &  $ x a^{\alpha - \beta} e^\beta  $ \\
\hline
\multicolumn{5}{|c|}{second rule $x a^\alpha c^{\beta'} \rightarrow x c^{\beta' -\alpha} e^{\alpha}$, $\beta' > \alpha$}\\
\hline
- & $ x a^\alpha c^{\beta'}  $ &  $  x  a^{\alpha - \beta} e^\beta  c^{\beta'-\beta}  $ &   $   x c^{\beta' -\alpha} e^{\alpha}  $ &  $ x  c^{\beta' -\alpha} e^{\alpha} $ \\
\hline
\multicolumn{5}{|c|}{second rule $a^{\alpha'} c^\beta e^\gamma x  \rightarrow a^{\alpha'} c^\beta x $, $0<\alpha' \le \alpha$, $0 \le \gamma$}\\
\hline
$\alpha > \beta$ &  $ x a^\alpha c^{\beta}  e^\gamma x $ &  $  x  a^{\alpha - \beta} e^{\beta+\gamma}  x $ &   $   x a^{\alpha} c^\beta  x  $ &  $ x  a^{\alpha - \beta} x $\\
\hline
$\alpha = \beta$  &  $ x a^\alpha c^{\beta}  e^\gamma x $ &  $  x  a^{\alpha - \beta} e^{\beta+\gamma}  x $ &   $   x a^{\alpha} c^\beta  x  $ &  $ x  e x $\\
\hline
\multicolumn{5}{|c|}{second rule $ x a^{\alpha} c^{\beta'} e^\gamma x  \rightarrow x a^{\alpha - \beta'} x $, $\beta \le \beta' < \alpha$, $0 \le \gamma$}\\
\hline
- &  $ x a^{\alpha} c^{\beta'} e^\gamma x $ &  $  x  a^{\alpha - \beta} e^{\beta} c^{\beta' -\beta} e^{\gamma}  x $ &   $   x a^{\alpha - \beta'}  x  $ &  $ x  a^{\alpha - \beta'} x $\\
\hline
\multicolumn{5}{|c|}{second rule $ x a^{\alpha} c^{\beta'} e^\gamma x  \rightarrow x e x $, $\beta \le \beta' = \alpha$, $0 \le \gamma$}\\
\hline
- &  $ x a^{\alpha} c^{\beta'} e^\gamma x $ &  $  x  a^{\alpha - \beta} e^{\beta} c^{\beta' -\beta} e^{\gamma}  x $ &   $   x e  x  $ &  $ x  e x $\\
\hline
\multicolumn{5}{|c|}{second rule $ x a^{\alpha} c^{\beta'} e^\gamma x  \rightarrow x  c^{\beta'-\alpha} e  x $, $ \beta' > \alpha$, $0 \le \gamma$}\\
\hline
- &  $ x a^{\alpha} c^{\beta'} e^\gamma x $ &  $  x  a^{\alpha - \beta} e^{\beta} c^{\beta' -\beta} e^{\gamma}  x $ &   $   x  c^{\beta'-\alpha} e  x  $ &  $ x   c^{\beta'-\alpha} e x $\\
\hline
\end{tabular}
\end{center}

\begin{center}
\begin{tabular}{|c|c|c|c|c|}
\hline
special & $t$ & $t_1$ & $t_2$ & $t_0$ \\
\hline
\hline
\multicolumn{5}{|c|}{first rule $x a^\alpha c^\beta \rightarrow x c^{\beta - \alpha} e^\alpha$, $0< \alpha < \beta$}\\
\hline
\hline
\multicolumn{5}{|c|}{second rule $x a^\alpha c^{\beta'} \rightarrow x c^{\beta' -\alpha} e^{\alpha}$, $\beta' > \beta$}\\
\hline
- & $ x a^\alpha c^{\beta'}  $ &  $  x  c^{\beta- \alpha} e^\alpha  c^{\beta'-\beta}  $ &   $   x c^{\beta' -\alpha} e^{\alpha}  $ &  $ x  c^{\beta' -\alpha} e^{\alpha} $ \\
\hline
\multicolumn{5}{|c|}{second rule $a^{\alpha'} c^\beta e^\gamma x  \rightarrow a^{\alpha'} c^\beta x $, $0<\alpha' \le \alpha$, $0 \le \gamma$}\\
\hline
- &  $ x a^\alpha c^{\beta}  e^\gamma x $ &  $  x  c^{\beta - \alpha} e^{\alpha+\gamma}  x $ &   $   x a^{\alpha} c^\beta  x  $ &  $ x  c^{\beta - \alpha} e x $\\
\hline
\multicolumn{5}{|c|}{second rule $ x a^{\alpha} c^{\beta'} e^\gamma x  \rightarrow x  c^{\beta'-\alpha} e  x $, $ \beta' \ge \beta$, $0 \le \gamma$}\\
\hline
- &  $ x a^{\alpha} c^{\beta'} e^\gamma x $ &  $  x  c^{\beta - \alpha} e^{\alpha} c^{\beta' -\beta} e^{\gamma}  x $ &   $   x  c^{\beta'-\alpha} e  x  $ &  $ x   c^{\beta'-\alpha} e x $\\
\hline
\hline
\multicolumn{5}{|c|}{first rule $ a^\alpha c^\beta e^\gamma x  \rightarrow a^\alpha c^\beta x $, $0<\alpha$, $0 \le \beta$, $0 < \gamma$}\\
\hline
\hline
\multicolumn{5}{|c|}{second rule $ a^{\alpha'} c^\beta e^\gamma x  \rightarrow a^\alpha c^\beta x $, $\alpha' > \alpha$}\\
\hline
- &  $  a^{\alpha'} c^\beta e^\gamma x $ &  $  a^{\alpha'} c^\beta   x $ &   $  a^{\alpha'} c^\beta x   $ &  $ a^{\alpha'} c^\beta x  $\\
\hline
\multicolumn{5}{|c|}{second rule $x a^{\alpha'} c^\beta e^\gamma x  \rightarrow x a^{\alpha' -\beta} x $, $\alpha' \ge \alpha, \alpha' > \beta$}\\
\hline
- &  $ x a^{\alpha'} c^\beta e^\gamma x $ &  $  x  a^{\alpha'} c^\beta   x $ &   $  x a^{\alpha' -\beta} x   $ &  $ x a^{\alpha' -\beta} x  $\\
\hline
\multicolumn{5}{|c|}{second rule $x a^{\alpha'} c^\beta e^\gamma x  \rightarrow x e x $, $\alpha' \ge \alpha, \alpha' = \beta$}\\
\hline
- &  $ x a^{\alpha'} c^\beta e^\gamma x $ &  $  x  a^{\alpha'} c^\beta   x $ &   $  x e x   $ &  $ x e x  $\\
\hline
\multicolumn{5}{|c|}{second rule $x a^{\alpha'} c^\beta e^\gamma x  \rightarrow x c^{\beta - \alpha'} e x $, $\alpha' \ge \alpha, \alpha' < \beta$}\\
\hline
- &  $ x a^{\alpha'} c^\beta e^\gamma x $ &  $  x  a^{\alpha'} c^\beta   x $ &   $  x c^{\beta - \alpha'} e x   $ &  $ x c^{\beta - \alpha'} e x  $\\
\hline
\end{tabular}
\end{center}

\section*{Appendix B}

This table is organised similarly to the one in Appendix A.

\begin{center}
\begin{tabular}{|c|c|c|c|c|}
\hline
special & $t$ & $t_1$ & $t_2$ & $t_0$ \\
\hline
\hline
\multicolumn{5}{|c|}{first rule $xb \rightarrow cx$}\\
\hline
\hline
- & $xb^{2n+1}$ & $cxb^{2n}$ & $xb$ & $cx$ \\
\hline
$\beta<2n$ & $   a^\alpha c^\beta e^\gamma x b  $ &  $  a^\alpha c^\beta e^\gamma c x  $ &   $  a^\alpha c^\beta x b  $ &  $ a^\alpha c^{\beta+1} x  $ \\
\hline
$\beta=2n$ & $   a^\alpha c^\beta e^\gamma x b  $ &  $  a^\alpha c^\beta e^\gamma c x  $ &   $  a^\alpha c^\beta x b  $ &  $ a^\alpha c x  $ \\
\hline
- & $x e^{\gamma} x b  $ &  $  x e^{\gamma} c x  $ &   $   x e x b  $ &  $ x c e x    $ \\
\hline
$\beta<n-1$ & $x c^\beta e^\gamma x b$ &   $x c^\beta e^\gamma c x$    &   $xc^\beta e x b$  &   $ xc^{\beta+1} e x$\\
\hline
$\beta=n-1$ & $x c^\beta e^\gamma x b$ &   $x c^\beta e^\gamma c x$    &   $xc^\beta e x b$  &   $ x a^n x$\\
\hline
$n\le \beta <2n-1$ & $x c^\beta e^\gamma x b$ &   $x c^\beta e^\gamma c x$    &   $x a^{2n-\beta} x b$  &   $ x a^{2n-\beta-1} x$\\
\hline
$\beta = 2n-1$ & $x c^\beta e^\gamma x b$ &   $x c^\beta e^\gamma c x$    &   $x a x b$  &   $ x e x$\\
\hline
$\beta = 2n$ &  $x c^\beta e^\gamma x b$ &   $x c^\beta e^\gamma c x$    &   $x e x b$  &   $ x c e x$\\
\hline
$1< \alpha-\beta \le n$ & $ x a^\alpha c^\beta e^\gamma x b$ & $ x a^\alpha c^\beta e^\gamma c x $ & $ x a^{\alpha-\beta} x b $ &  $ x a^{\alpha-\beta -1} x $\\
\hline
$1 = \alpha-\beta $ & $ x a^\alpha c^\beta e^\gamma x b$ & $ x a^\alpha c^\beta e^\gamma c x $ & $ x a x b $ &  $ x e x $\\
\hline
$n+1 = \alpha-\beta$ & $ x a^\alpha c^\beta e^\gamma x b$ & $ x a^\alpha c^\beta e^\gamma c x $ & $ x c^{n-1} e x b $ &  $ x a^n x $\\
\hline
$n+1 < \alpha-\beta \le 2n$ & $ x a^\alpha c^\beta e^\gamma x b$ & $ x a^\alpha c^\beta e^\gamma c x $ & $ x c^{2n-\alpha+\beta} e x b $ &  $ x c^{2n-\alpha+\beta+1} e x $\\
\hline
$\alpha = \beta$ & $ x a^\alpha c^\beta e^\gamma x b$ & $ x a^\alpha c^\beta e^\gamma c x $ & $ x e x b $ &  $ x c e x $\\
\hline
$0<\beta-\alpha<n-1$ & $ x a^\alpha c^\beta e^\gamma x b$ & $ x a^\alpha c^\beta e^\gamma c x $ & $ x c^{\beta-\alpha} e x b $ &  $ x  c^{\beta-\alpha+1} e x $\\
\hline
$\beta-\alpha=n-1$ & $ x a^\alpha c^\beta e^\gamma x b$ & $ x a^\alpha c^\beta e^\gamma c x $ & $ x c^{\beta-\alpha} e x b $ &  $ x  a^n x $\\
\hline
$n \le \beta-\alpha < 2n-1$ & $ x a^\alpha c^\beta e^\gamma x b$ & $ x a^\alpha c^\beta e^\gamma c x $ & $ x a^{2n-\beta+\alpha} x b $ &  $ x  a^{2n-\beta+\alpha-1} x $\\
\hline
$\beta - \alpha = 2n-1$ & $ x a^\alpha c^\beta e^\gamma x b$ & $ x a^\alpha c^\beta e^\gamma c x $ & $ x a x b $ &  $ x e x $\\
\hline
\end{tabular}
\end{center}

\begin{center}
\begin{tabular}{|c|c|c|c|c|}
\hline
special & $t$ & $t_1$ & $t_2$ & $t_0$ \\
\hline
\hline
\multicolumn{5}{|c|}{first rule $ca \rightarrow ac$}\\
\hline
\hline
-& $ eca $ &  $  eac  $ &   $ cea  $ &  $ ace $ \\
\hline
-& $ ca^{2n+1} $ &  $  aca^{2n}  $ &   $ ca  $ &  $ ac $ \\
\hline
-& $ c^{2n+1} a $ &  $  c^{2n} a c $ &   $ ca  $ &  $ ac $ \\
\hline
$ \beta \le \alpha <2n$ & $ x a^\alpha c^\beta a  $ &  $  x a^\alpha c^{\beta-1} a c  $ &   $   x a^{\alpha - \beta} e^\beta a    $ &  $ x a^{\alpha-\beta+1} e^{\beta}  $ \\
\hline
$ 1<\beta, \alpha =2n$ & $ x a^\alpha c^\beta a  $ &  $  x a^\alpha c^{\beta-1} a c  $ &   $   x a^{2n - \beta} e^\beta a    $ &  $ x a^{2n-\beta+1} e^{\beta}  $ \\
\hline
$ 1=\beta, \alpha =2n$ & $ x a^\alpha c^\beta a  $ &  $  x a^{2n+1} c  $ &   $   x a^{2n - 1} e a    $ &  $ x e  $ \\
\hline
$\alpha < \beta$ & $ x a^\alpha c^\beta a  $ &  $  x a^\alpha c^{\beta-1} a c  $ &   $   x c^{\beta-\alpha} e^{\alpha} a    $ &  $ x c^{\beta-\alpha-1} e^{\alpha+1}     $ \\
\hline
$\beta<2n$ & $  c a^\alpha c^\beta e^\gamma x   $ &  $  a c a^{\alpha-1} c^\beta e^\gamma c x  $ &   $ c a^\alpha c^\beta x   $ &  $ a^\alpha c^{\beta+1} x  $ \\
\hline
$\beta = 2n$ & $  c a^\alpha c^\beta e^\gamma x   $ &  $  a c a^{\alpha-1} c^\beta e^\gamma c x  $ &   $ c a^\alpha c^\beta x   $ &  $ a^\alpha c x  $ \\
\hline
\hline
\multicolumn{5}{|c|}{first rule $ea \rightarrow ae$}\\
\hline
\hline
-& $ ea^{2n+1} $ &  $  aea^{2n}  $ &   $ ea  $ &  $ ae $ \\
\hline
-& $ e^{2n+1} a $ &  $  e^{2n} a e $ &   $ ea  $ &  $ ae $ \\
\hline
- & $  e a^\alpha c^\beta e^\gamma x   $ &  $  a e a^{\alpha-1} c^\beta e^\gamma x  $ &   $ e a^\alpha c^\beta x   $ &  $ a^\alpha c^{\beta} x  $ \\
\hline
\multicolumn{5}{|c|}{second rule $x a^\alpha c^\beta e^\gamma \rightarrow x c^{2n+\beta -\alpha} e^{\alpha+\gamma -2n}$,  $0< \alpha, \gamma \le 2n$, $0 \le \beta \le 2n$, $\alpha+\gamma > 2n, \gamma+\beta \le 2n$}\\
\hline
$\alpha = 2n$ & $x a^\alpha c^\beta e^\gamma a$ & $x a^{2n} c^\beta e^{\gamma-1} a e$ & $x c^\beta e^\gamma a$ & $x a c^\beta e^\gamma$ \\
\hline
$\alpha < 2n$ & $x a^\alpha c^\beta e^\gamma a$ & $x a^{\alpha} c^\beta e^{\gamma-1} a e$ & $x c^{2n-\alpha+\beta} e^{\alpha+ \gamma-2n} a$ & $x c^{2n-\alpha+\beta-1} e^{\alpha+\gamma+1-2n} $ \\
\hline
\multicolumn{5}{|c|}{second rule $x a^\alpha c^\beta e^\gamma \rightarrow x a^{2n+\alpha -\beta} e^{\beta+\gamma -2n}$,  $0< \beta, \gamma \le 2n$, $0 \le \alpha \le 2n$, $\beta+\gamma > 2n, \gamma+\alpha \le 2n$}\\
\hline
$\beta-\alpha = 1$ and $\alpha+\gamma=2n$ & $x a^\alpha c^\beta e^\gamma a$ & $x a^{\alpha} c^\beta e^{\gamma-1} a e$ & $x a^{2n-1} e a$ & $x e$ \\
\hline
$\beta - \alpha >1$ and $\alpha+\gamma=2n$  & $x a^\alpha c^\beta e^\gamma a$ & $x a^{\alpha} c^\beta e^{\gamma-1} a e$ & $x a^{2n+\alpha -\beta} e^{\beta+\gamma -2n} a$ & $x c^{\beta-\alpha-1} e$ \\
\hline
$\beta - \alpha >1$ and $\alpha+\gamma<2n$ & $x a^\alpha c^\beta e^\gamma a$ & $x a^{\alpha} c^\beta e^{\gamma-1} a e$ & $x a^{2n+\alpha -\beta} e^{\beta+\gamma -2n} a$ & $x a^{2n+\alpha -\beta+1} e^{\beta+\gamma -2n}$ \\
\hline
\multicolumn{5}{|c|}{second rule $x a^\alpha c^\beta e^\gamma \rightarrow x a^{\alpha - \beta} e^{\beta+\gamma - 2n}$,  $0< \alpha,\beta, \gamma \le 2n$, $\alpha+\gamma, \gamma+\beta > 2n$, $\alpha > \beta$}\\
\hline
$\alpha = 2n, \gamma < 2n$ & $x a^\alpha c^\beta e^\gamma a $ & $x a^{\alpha} c^\beta e^{\gamma-1} a e$ & $x a^{2n - \beta} e^{\gamma+\beta - 2n} a$ & $x a^{2n - \beta+1} e^{\gamma+\beta - 2n} $\\
\hline
$\alpha = 2n, \gamma = 2n$ & $x a^\alpha c^\beta e^\gamma a $ & $x a^{\alpha} c^\beta e^{\gamma-1} a e$ & $x a^{2n - \beta} e^{\beta} a$ & $x c^{\beta-1} e$\\
\hline
$\alpha < 2n$ & $x a^\alpha c^\beta e^\gamma a $ & $x a^{\alpha} c^\beta e^{\gamma-1} a e$ & $x a^{\alpha - \beta} e^{\gamma+\beta - 2n} a$ & $x a^{\alpha - \beta+1} e^{\gamma+\beta - 2n} $\\
\hline
\multicolumn{5}{|c|}{second rule $x a^\alpha c^\beta e^\gamma \rightarrow x  e^{\beta+\gamma - 2n}$,  $0< \alpha,\beta, \gamma \le 2n$, $\alpha+\gamma, \gamma+\beta > 2n$, $\alpha = \beta$}\\
\hline
$\alpha = 2n, \gamma = 2n$ & $x a^\alpha c^\beta e^\gamma a $ & $x a^{\alpha} c^\beta e^{\gamma-1} a e$ & $x  e^{2n} a$ & $x c^{2n-1} e$\\
\hline
$\alpha = 2n, \gamma < 2n$ & $x a^\alpha c^\beta e^\gamma a $ & $x a^{\alpha} c^\beta e^{\gamma-1} a e$ & $x  e^{\gamma} a$ & $x a e^\gamma$ \\
\hline
$\alpha < 2n$ & $x a^\alpha c^\beta e^\gamma a $ & $x a^{\alpha} c^\beta e^{\gamma-1} a e$ & $x  e^{\beta+\gamma - 2n} a$ & $x a e^{\beta+\gamma - 2n}$ \\
\hline
\multicolumn{5}{|c|}{second rule $x a^\alpha c^\beta e^\gamma \rightarrow x c^{\beta - \alpha} e^{\alpha+\gamma -2n}$,  $0< \alpha,\beta, \gamma \le 2n$, $\alpha+\gamma, \gamma+\beta > 2n$, $\beta > \alpha$}\\
\hline
$\beta-\alpha = 1$ & $x a^\alpha c^\beta e^\gamma a$ & $x a^{\alpha} c^\beta e^{\gamma-1} a e$ & $x c e^{\alpha+\gamma -2n} a$ & $x  e^{\alpha+1+\gamma -2n}$ \\
\hline
$\beta-\alpha > 1$ & $x a^\alpha c^\beta e^\gamma a$ & $x a^{\alpha} c^\beta e^{\gamma-1} a e$ & $x c^{\beta-\alpha} e^{\alpha+\gamma -2n} a$ & $x c^{\beta-\alpha-1} e^{\alpha+1+\gamma -2n}$ \\
\hline
\hline
\multicolumn{5}{|c|}{first rule $ec \rightarrow ce$}\\
\hline
\hline
-& $ ec^{2n+1} $ &  $  cec^{2n}  $ &   $ ec  $ &  $ ce $ \\
\hline
-& $ e^{2n+1} c $ &  $  e^{2n} c e $ &   $ ec  $ &  $ ce $ \\
\hline
\multicolumn{5}{|c|}{second rule $x a^\alpha c^\beta e^\gamma \rightarrow x c^{2n+\beta -\alpha} e^{\alpha+\gamma -2n}$,  $0< \alpha, \gamma \le 2n$, $0 \le \beta \le 2n$, $\alpha+\gamma > 2n, \gamma+\beta \le 2n$}\\
\hline
$\alpha -\beta =1$ and $\beta+\gamma = 2n$ & $x a^\alpha c^\beta e^\gamma c$ & $x a^{\alpha} c^\beta e^{\gamma-1} c e$ & $x c^{2n-1} e c$ & $x  e$ \\
\hline
$\alpha -\beta >1$ and $\beta+\gamma = 2n$ & $x a^\alpha c^\beta e^\gamma c$ & $x a^{\alpha} c^\beta e^{\gamma-1} c e$ & $x c^{2n-\alpha+\beta} e^{\alpha+\gamma-2n} c$ & $x a^{\alpha - \beta-1} e $ \\
\hline
$\alpha -\beta >1$ and $\beta+\gamma < 2n$ & $x a^\alpha c^\beta e^\gamma c$ & $x a^{\alpha} c^\beta e^{\gamma-1} c e$ & $x c^{2n-\alpha+\beta} e^{\alpha+\gamma-2n} c$ & $x c^{2n-\alpha+\beta+1}  e^{\alpha+\gamma-2n}$ \\
\hline
\end{tabular}
\end{center}

\begin{center}
\begin{tabular}{|c|c|c|c|c|}
\hline
special & $t$ & $t_1$ & $t_2$ & $t_0$ \\
\hline
\hline
\multicolumn{5}{|c|}{first rule $ec \rightarrow ce$}\\
\hline
\hline
\multicolumn{5}{|c|}{second rule $x a^\alpha c^\beta e^\gamma \rightarrow x a^{2n+\alpha -\beta} e^{\beta+\gamma -2n}$,  $0< \beta, \gamma \le 2n$, $0 \le \alpha \le 2n$, $\beta+\gamma > 2n, \gamma+\alpha \le 2n$}\\
\hline
$\beta = 2n$ & $x a^\alpha c^\beta e^\gamma c$ & $x a^{\alpha} c^\beta e^{\gamma-1} c e$ & $x a^\alpha e^\gamma c$  & $x a^\alpha c e^\gamma$ \\ 
\hline
$\beta < 2n$ & $x a^\alpha c^\beta e^\gamma c$ & $x a^{\alpha} c^\beta e^{\gamma-1} c e$ & $x a^{2n+\alpha -\beta} e^{\beta+\gamma -2n} c$  & $x  a^{2n+\alpha -\beta-1} e^{\beta+\gamma+1 -2n}$ \\ 
\hline
\multicolumn{5}{|c|}{second rule $x a^\alpha c^\beta e^\gamma \rightarrow x a^{\alpha - \beta} e^{\beta+\gamma - 2n}$,  $0< \alpha,\beta, \gamma \le 2n$, $\alpha+\gamma, \gamma+\beta > 2n$, $\alpha > \beta$}\\
\hline
$\alpha -\beta =1$ & $x a^\alpha c^\beta e^\gamma c$ &  $x a^{\alpha} c^\beta e^{\gamma-1} c e$  & $x a e^{\beta+\gamma - 2n} c$ &  $x e^{\beta+\gamma - 2n+1} $\\
\hline
$\alpha -\beta >1$ & $x a^\alpha c^\beta e^\gamma c$ & $x a^{\alpha} c^\beta e^{\gamma-1} c e$ & $x a^{\alpha-\beta} e^{\beta+\gamma - 2n} c$ &  $x a^{\alpha-\beta-1} e^{\beta+\gamma - 2n+1}$\\
\hline
\multicolumn{5}{|c|}{second rule $x a^\alpha c^\beta e^\gamma \rightarrow x  e^{\beta+\gamma - 2n}$,  $0< \alpha,\beta, \gamma \le 2n$, $\alpha+\gamma, \gamma+\beta > 2n$, $\alpha = \beta$}\\
\hline
$\beta =2n, \gamma = 2n$ & $x a^\alpha c^\beta e^\gamma c$ & $x a^{\alpha} c^\beta e^{\gamma-1} c e$  & $xe^{2n} c$ & $x a^{2n-1} e$\\
\hline
$\beta =2n, \gamma < 2n$ & $x a^\alpha c^\beta e^\gamma c$ & $x a^{\alpha} c^\beta e^{\gamma-1} c e$  & $xe^{\gamma} c$ & $x c e^{\gamma}$\\
\hline
$\beta<2n$ & $x a^\alpha c^\beta e^\gamma c$ & $x a^{\alpha} c^\beta e^{\gamma-1} c e$  & $xe^{\gamma+\beta-2n} c$ & $x c e^{\gamma+\beta-2n}$\\
\hline
\multicolumn{5}{|c|}{second rule $x a^\alpha c^\beta e^\gamma \rightarrow x c^{\beta - \alpha} e^{\alpha+\gamma -2n}$,  $0< \alpha,\beta, \gamma \le 2n$, $\alpha+\gamma, \gamma+\beta > 2n$, $\beta > \alpha$}\\
\hline
$\beta =2n, \gamma = 2n$ & $x a^\alpha c^\beta e^\gamma c$ & $x a^{\alpha} c^\beta e^{\gamma-1} c e$  & $x c^{2n -\alpha} e^{\alpha} c$ & $x a^{\alpha-1} e $\\
\hline
$\beta =2n, \gamma < 2n$ & $x a^\alpha c^\beta e^\gamma c$ & $x a^{\alpha} c^\beta e^{\gamma-1} c e$  & $x c^{2n -\alpha} e^{\alpha+\gamma-2n} c$ & $x c^{2n -\alpha+1} e^{\alpha+\gamma-2n}$\\
\hline
$\beta<2n$ & $x a^\alpha c^\beta e^\gamma c$ & $x a^{\alpha} c^\beta e^{\gamma-1} c e$  & $xc^{\beta-\alpha} e^{\gamma+\alpha-2n} c$ & $x c^{\beta-\alpha+1} e^{\alpha+\gamma-2n}$\\
\hline
\hline
\multicolumn{5}{|c|}{first rule $a^{2n+1} \rightarrow a$}\\
\hline
\hline
\multicolumn{5}{|c|}{second rule $a^\alpha c^\beta e^\gamma x  \rightarrow a^\alpha c^\beta x $, $0<\alpha, \gamma  \le 2n$, $0 \le \beta \le 2n$}\\
\hline
$0< \alpha'\le \alpha$ & $a^{2n+\alpha'} c^\beta e^\gamma x$ & $a^{\alpha'} c^\beta e^\gamma x$ & $a^{2n+\alpha'} c^\beta x$ & $a^{\alpha'} c^\beta x$\\
\hline
\hline
\multicolumn{5}{|c|}{first rule $c^{2n+1} \rightarrow c$}\\
\hline
\hline
\multicolumn{5}{|c|}{second rule $x a^\alpha c^\beta \rightarrow x a^{\alpha - \beta} e^\beta$, $0< \beta \le \alpha  \le 2n$}\\
\hline
$0< \beta' \le \beta$ & $x a^\alpha c^{\beta'+2n}$ & $x a^\alpha c^{\beta'}$  & $x a^{\alpha - \beta} e^{\beta} c^{\beta'+2n-\beta}$ & $x a^{\alpha-\beta'} e^{\beta'}$ \\
\hline
\multicolumn{5}{|c|}{second rule $x a^\alpha c^\beta \rightarrow x c^{ \beta -\alpha} e^\alpha$, $0<  \alpha < \beta \le 2n$}\\
\hline
$0< \beta' \le \alpha$ & $x a^\alpha c^{\beta'+2n}$ & $x a^\alpha c^{\beta'}$  & $x c^{\beta - \alpha} e^{\alpha} c^{\beta'+2n-\beta}$ & $x a^{\alpha-\beta'} e^{\beta'}$ \\
\hline
$\alpha < \beta' \le \beta$ & $x a^\alpha c^{\beta'+2n}$ & $x a^\alpha c^{\beta'}$  & $x c^{\beta - \alpha} e^{\alpha} c^{\beta'+2n-\beta}$ & $x c^{\beta'-\alpha} e^{\alpha}$ \\
\hline
\hline
\multicolumn{5}{|c|}{first rule $e^{2n+1} \rightarrow e$}\\
\hline
\hline
\multicolumn{5}{|c|}{second rule $x a^\alpha c^\beta e^\gamma \rightarrow x c^{2n+\beta -\alpha} e^{\alpha+\gamma -2n}$,  $0< \alpha, \gamma \le 2n$, $0 \le \beta \le 2n$, $\alpha+\gamma > 2n, \gamma+\beta \le 2n$}\\
\hline
$0 < \gamma' \le \gamma, \alpha+\gamma' \le 2n$ & $x a^\alpha c^\beta e^{\gamma' +2n}$ & $x a^\alpha c^\beta e^{\gamma'}$ & $x c^{2n+\beta -\alpha} e^{\alpha+\gamma'} $ & $x a^{\alpha -\beta} e^{\beta+\gamma'}$ \\
\hline
$0 < \gamma' \le \gamma, \alpha+\gamma' > 2n$ & $x a^\alpha c^\beta e^{\gamma' +2n}$ & $x a^\alpha c^\beta e^{\gamma'}$ & $x c^{2n+\beta -\alpha} e^{\alpha+\gamma'} $ & $x c^{2n+\beta -\alpha} e^{\alpha+\gamma' -2n}$ \\
\hline
\multicolumn{5}{|c|}{second rule $x a^\alpha c^\beta e^\gamma \rightarrow x a^{2n+\alpha -\beta} e^{\beta+\gamma -2n}$,  $0< \beta, \gamma \le 2n$, $0 \le \alpha \le 2n$, $\beta+\gamma > 2n, \gamma+\alpha \le 2n$}\\
\hline
$0 < \gamma' \le \gamma, \beta+\gamma' \le 2n$ & $x a^\alpha c^\beta e^{\gamma' +2n}$ & $x a^\alpha c^\beta e^{\gamma'}$ & $x a^{2n+\alpha -\beta} e^{\beta+\gamma'} $ & $x c^{\beta -\alpha} e^{\alpha+\gamma'}$ \\
\hline
$0 < \gamma' \le \gamma, \beta+\gamma' > 2n$ & $x a^\alpha c^\beta e^{\gamma' +2n}$ & $x a^\alpha c^\beta e^{\gamma'}$ & $x a^{2n+\alpha -\beta} e^{\beta+\gamma'} $ & $x a^{2n +\alpha -\beta} e^{\beta+\gamma' -2n}$ \\
\hline
\multicolumn{5}{|c|}{second rule $x a^\alpha c^\beta e^\gamma \rightarrow x a^{\alpha - \beta} e^{\beta+\gamma - 2n}$,  $0< \alpha,\beta, \gamma \le 2n$, $\alpha+\gamma, \gamma+\beta > 2n$, $\alpha > \beta$}\\
\hline
$0 < \gamma' \le \gamma$ & $x a^\alpha c^\beta e^{\gamma' +2n}$ & $x a^\alpha c^\beta e^{\gamma'}$ & $x a^{\alpha -\beta} e^{\beta+\gamma'} $ &  $x a^{\alpha -\beta} e^{\beta+\gamma'} $ \\
\hline
\multicolumn{5}{|c|}{second rule $x a^\alpha c^\beta e^\gamma \rightarrow x  e^{\beta+\gamma - 2n}$,  $0< \alpha,\beta, \gamma \le 2n$, $\alpha+\gamma, \gamma+\beta > 2n$, $\alpha = \beta$}\\
\hline
$0 < \gamma' \le \gamma$ & $x a^\alpha c^\beta e^{\gamma' +2n}$ & $x a^\alpha c^\beta e^{\gamma'}$ & $x e^{\beta+\gamma'} $ &  $x e^{\beta+\gamma'} $ \\
\hline
\multicolumn{5}{|c|}{second rule $x a^\alpha c^\beta e^\gamma \rightarrow x c^{\beta - \alpha} e^{\alpha+\gamma -2n}$,  $0< \alpha,\beta, \gamma \le 2n$, $\alpha+\gamma, \gamma+\beta > 2n$, $\beta > \alpha$}\\
\hline
$0 < \gamma' \le \gamma$ & $x a^\alpha c^\beta e^{\gamma' +2n}$ & $x a^\alpha c^\beta e^{\gamma'}$ & $x c^{\beta -\alpha} e^{\alpha+\gamma'} $ &  $x c^{\beta -\alpha} e^{\alpha+\gamma'} $ \\
\hline

\end{tabular}
\end{center}

\begin{center}
\begin{tabular}{|c|c|c|c|c|}
\hline
special & $t$ & $t_1$ & $t_2$ & $t_0$ \\
\hline
\hline
\multicolumn{5}{|c|}{first rule $x a^\alpha c^\beta \rightarrow x a^{\alpha - \beta} e^\beta$, $0< \beta \le \alpha  \le 2n$}\\
\hline
\hline
\multicolumn{5}{|c|}{second rule $x a^\alpha c^{\beta'} \rightarrow x a^{\alpha - \beta'} e^{\beta'}$, $0< \beta' \le \alpha$, $\beta' < \beta$}\\
\hline
- & $ x a^\alpha c^\beta  $ &  $  x  a^{\alpha - \beta} e^\beta   $ &   $   x a^{\alpha - \beta'} e^{\beta'}  c^{\beta-\beta'}    $ &  $ x a^{\alpha - \beta} e^\beta  $ \\
\hline
\multicolumn{5}{|c|}{second rule $x a^\alpha c^{\beta'} \rightarrow x c^{\beta' -\alpha} e^{\alpha}$, $2n \ge \beta' > \alpha$}\\
\hline
- & $ x a^\alpha c^{\beta'}  $ &  $  x  a^{\alpha - \beta} e^\beta c^{\beta'-\beta}  $ &   $   x c^{\beta'-\alpha} e^{\alpha} $ &  $ x c^{\beta'-\alpha} e^{\alpha}  $ \\
\hline
\multicolumn{5}{|c|}{second rule $x a^\alpha c^{\beta'} e^\gamma \rightarrow x c^{2n+\beta' -\alpha} e^{\alpha+\gamma-2n}$, $\gamma+\alpha > 2n$, $\beta' \ge \beta$, $\gamma+\beta'<2n$}\\
\hline
- & $ x a^\alpha c^{\beta'} e^\gamma  $ &  $  x  a^{\alpha - \beta} e^\beta c^{\beta'-\beta} e^\gamma $ &   $   x c^{2n+\beta' -\alpha} e^{\alpha+\gamma-2n} $ &  $ x c^{2n+\beta' -\alpha} e^{\alpha+\gamma-2n}  $ \\
\hline
\multicolumn{5}{|c|}{second rule $x a^\alpha c^{\beta'} e^\gamma \rightarrow x a^{2n+\alpha -\beta'} e^{\beta'+\gamma-2n}$, $\gamma+\alpha \le 2n$, $\beta' > \beta$, $\gamma+\beta'>2n$}\\
\hline
- & $ x a^\alpha c^{\beta'} e^\gamma  $ &  $  x  a^{\alpha - \beta} e^\beta c^{\beta'-\beta} e^\gamma $ & $   x a^{2n+\alpha -\beta'} e^{\beta'+\gamma-2n} $ &  $ x a^{2n+\alpha -\beta'} e^{\beta'+\gamma-2n}  $ \\
\hline
\multicolumn{5}{|c|}{second rule $x a^\alpha c^{\beta'} e^\gamma \rightarrow x a^{\alpha -\beta'} e^{\beta'+\gamma-2n}$,  $\alpha>\beta' \ge \beta$, $\gamma+\alpha, \gamma+\beta' > 2n$}\\
\hline
- & $ x a^\alpha c^{\beta'} e^\gamma  $ &  $  x  a^{\alpha - \beta} e^\beta c^{\beta'-\beta} e^\gamma $ & $   x a^{\alpha -\beta'} e^{\beta'+\gamma-2n} $ &  $ x a^{\alpha -\beta'} e^{\beta'+\gamma-2n}  $ \\
\hline
\multicolumn{5}{|c|}{second rule $x a^\alpha c^{\beta'} e^\gamma \rightarrow x e^{\beta'+\gamma-2n}$,  $\alpha=\beta'$, $\gamma+\alpha> 2n$}\\
\hline
- & $ x a^\alpha c^{\beta'} e^\gamma  $ &  $  x  a^{\alpha - \beta} e^\beta c^{\beta'-\beta} e^\gamma $ & $   x e^{\beta'+\gamma-2n} $ &  $ x e^{\beta'+\gamma-2n}  $ \\
\hline
\multicolumn{5}{|c|}{second rule $x a^\alpha c^{\beta'} e^\gamma \rightarrow x c^{\beta' - \alpha} e^{\alpha'+\gamma-2n}$,  $\alpha<\beta' \le 2n$, $\gamma+\alpha, \gamma+\beta' > 2n$}\\
\hline
- & $ x a^\alpha c^{\beta'} e^\gamma  $ &  $  x  a^{\alpha - \beta} e^\beta c^{\beta'-\beta} e^\gamma $ & $   x c^{\beta' - \alpha} e^{\alpha'+\gamma-2n} $ &  $ x c^{\beta' - \alpha} e^{\alpha'+\gamma-2n}  $ \\
\hline
\multicolumn{5}{|c|}{second rule $a^{\alpha'} c^\beta e^\gamma x  \rightarrow a^{\alpha'} c^\beta x $, $0<\alpha' \le \alpha$, $0 \le \gamma \le 2n$}\\
\hline
$\alpha=\beta$&  $ x a^\alpha c^{\beta}  e^\gamma x $ &  $  x  e^{\beta+\gamma}  x $ &   $   x a^{\alpha} c^\beta  x  $ &  $ x e x $\\
\hline
$0< \alpha-\beta \le n$&  $ x a^\alpha c^{\beta}  e^\gamma x $ &  $  x  a^{\alpha - \beta} e^{\beta+\gamma}  x $ &   $   x a^{\alpha} c^\beta  x  $ &  $ x  a^{\alpha - \beta} x $\\
\hline
$n< \alpha-\beta < 2n$&  $ x a^\alpha c^{\beta}  e^\gamma x $ &  $  x  a^{\alpha - \beta} e^{\beta+\gamma}  x $ &   $   x a^{\alpha} c^\beta  x  $ &  $ x  c^{2n-\alpha +\beta} x $\\
\hline
\multicolumn{5}{|c|}{second rule $x a^\alpha c^{\beta'} e^\gamma x \rightarrow x a^{\alpha-\beta'} x$, $\beta\le \beta'$, $0<\alpha-\beta'\le n$, $0 \le \gamma \le 2n$}\\
\hline
- &  $ x a^\alpha c^{\beta'}  e^\gamma x $ &  $ x  a^{\alpha - \beta} e^\beta c^{\beta'-\beta} e^\gamma  x $ &   $   x a^{\alpha-\beta'}  x  $ &  $ x  a^{\alpha-\beta'} x $\\
\hline
\multicolumn{5}{|c|}{second rule $x a^\alpha c^{\beta'} e^\gamma x \rightarrow x c^{2n-\alpha+\beta'}ex$, $\beta\le \beta'$, $n<\alpha-\beta'\le 2n$, $0 \le \gamma \le 2n$}\\
\hline
- &  $ x a^\alpha c^{\beta'}  e^\gamma x $ &  $ x  a^{\alpha - \beta} e^\beta c^{\beta'-\beta} e^\gamma  x $ &   $   x c^{2n-\alpha+\beta'}e   x  $ &  $ x   c^{2n-\alpha+\beta'}e x $\\
\hline
\multicolumn{5}{|c|}{second rule $x a^\alpha c^{\beta'} e^\gamma x \rightarrow x e x$, $\alpha=\beta'$, $0 \le \gamma \le 2n$}\\
\hline
- &  $ x a^\alpha c^{\beta'}  e^\gamma x $ &  $ x  a^{\alpha - \beta} e^\beta c^{\beta'-\beta} e^\gamma  x $ &   $   x e   x  $ &  $ x  e x $\\
\hline
\multicolumn{5}{|c|}{second rule $x a^\alpha c^{\beta'} e^\gamma x \rightarrow x c^{\beta'-\alpha} e x$, $0<\beta'-\alpha<n$, $0 \le \gamma \le 2n$}\\
\hline
- &  $ x a^\alpha c^{\beta'}  e^\gamma x $ &  $ x  a^{\alpha - \beta} e^\beta c^{\beta'-\beta} e^\gamma  x $ &  $   x c^{\beta'-\alpha} e   x  $ & $ x c^{\beta'-\alpha} e  x $\\
\hline
\multicolumn{5}{|c|}{second rule $x a^\alpha c^{\beta'} e^\gamma x \rightarrow x a^{2n-\beta'+\alpha} x$, $n'\le\beta'-\alpha<2n$, $0 \le \gamma \le 2n$}\\
\hline
- &  $ x a^\alpha c^{\beta'}  e^\gamma x $ &  $ x  a^{\alpha - \beta} e^\beta c^{\beta'-\beta} e^\gamma  x $ &  $   x  a^{2n-\beta'+\alpha}  x  $ & $ x  a^{2n-\beta'+\alpha} x $\\
\hline
\hline
\multicolumn{5}{|c|}{first rule $x a^\alpha c^\beta \rightarrow x c^{\beta - \alpha} e^\alpha$, $0< \alpha <\beta \le 2n$}\\
\hline
\hline
\multicolumn{5}{|c|}{second rule $x a^\alpha c^{\beta'} \rightarrow x c^{\beta' -\alpha} e^{\alpha}$, $2n \ge \beta' > \beta$}\\
\hline
- & $ x a^\alpha c^{\beta'}  $ &  $  x  c^{\beta - \alpha} e^\alpha c^{\beta'-\beta}  $ &   $   x c^{\beta'-\alpha} e^{\alpha} $ &  $ x c^{\beta'-\alpha} e^{\alpha}  $ \\
\hline
\multicolumn{5}{|c|}{second rule $x a^\alpha c^{\beta'} e^\gamma \rightarrow x a^{2n+\alpha -\beta'} e^{\beta'+\gamma-2n}$, $\gamma+\alpha \le 2n$, $\beta' \ge \beta$, $\gamma+\beta'>2n$}\\
\hline
- & $ x a^\alpha c^{\beta'} e^\gamma $ &  $  x  c^{\beta - \alpha} e^\alpha c^{\beta'-\beta} e^\gamma  $ &   $   x a^{2n+\alpha -\beta'} e^{\beta'+\gamma-2n} $ &  $ x a^{2n+\alpha -\beta'} e^{\beta'+\gamma-2n}  $ \\
\hline
\multicolumn{5}{|c|}{second rule $x a^\alpha c^{\beta'} e^\gamma \rightarrow x c^{\beta' - \alpha} e^{\alpha+\gamma-2n}$,  $\beta\le\beta' \le 2n$, $\gamma+\alpha, \gamma+\beta' > 2n$}\\
\hline
- & $ x a^\alpha c^{\beta'} e^\gamma $ &  $  x  c^{\beta - \alpha} e^\alpha c^{\beta'-\beta} e^\gamma  $ &   $   x c^{\beta' - \alpha} e^{\alpha+\gamma-2n} $ &  $ x c^{\beta' - \alpha} e^{\alpha+\gamma-2n}  $ \\
\hline
\multicolumn{5}{|c|}{second rule $a^{\alpha'} c^\beta e^\gamma x  \rightarrow a^{\alpha'} c^\beta x $, $0<\alpha' \le \alpha$, $0 \le \gamma \le 2n$}\\
\hline
$0<\beta-\alpha<n$ &  $ x a^\alpha c^{\beta}  e^\gamma x $ &  $  x  c^{\beta -\alpha} e^{\alpha+\gamma}  x $ &   $   x a^{\alpha} c^\beta  x  $ &  $ x  c^{\beta - \alpha} e x $\\
\hline
$n \le \beta-\alpha<2n$   &  $ x a^\alpha c^{\beta}  e^\gamma x $ &  $  x  c^{\beta -\alpha} e^{\alpha+\gamma}  x $ &   $   x a^{\alpha} c^\beta  x  $ &  $ x a^{2n- \beta + \alpha} x $\\
\hline

\end{tabular}
\end{center}

\begin{center}
\begin{tabular}{|c|c|c|c|c|}
\hline
special & $t$ & $t_1$ & $t_2$ & $t_0$ \\
\hline
\hline
\multicolumn{5}{|c|}{first rule $x a^\alpha c^\beta \rightarrow x c^{\beta - \alpha} e^\alpha$, $0< \alpha <\beta \le 2n$}\\
\hline
\hline
\multicolumn{5}{|c|}{second rule $x a^\alpha c^{\beta'} e^\gamma x \rightarrow x c^{\beta'-\alpha} e x$, $\beta'\ge \beta$, $0<\beta'-\alpha<n$, $0 \le \gamma \le 2n$}\\
\hline
- &  $ x a^\alpha c^{\beta'}  e^\gamma x $ &  $ x  c^{\beta - \alpha} e^\alpha c^{\beta'-\beta} e^\gamma  x $ &  $   x c^{\beta'-\alpha} e   x  $ & $ x c^{\beta'-\alpha} e  x $\\
\hline
\multicolumn{5}{|c|}{second rule $x a^\alpha c^{\beta'} e^\gamma x \rightarrow x c^{\beta'-\alpha} e x$, $\beta'\ge \beta$, $n\le\beta'-\alpha<2n$, $0 \le \gamma \le 2n$}\\
\hline
- &  $ x a^\alpha c^{\beta'}  e^\gamma x $ &  $ x  c^{\beta - \alpha} e^\alpha c^{\beta'-\beta} e^\gamma  x $ &  $   x a^{2n-\beta'+\alpha} x  $ & $ x a^{2n-\beta'+\alpha} x $\\
\hline
\hline
\multicolumn{5}{|c|}{first rule $x a^\alpha c^\beta e^\gamma \rightarrow x c^{2n+\beta - \alpha} e^{\alpha+\gamma-2n}$, $0< \alpha,\gamma  \le 2n$, $0\le \beta\le 2n, \beta+\gamma \le 2n <\alpha+\gamma$}\\
\hline
\hline
\multicolumn{5}{|c|}{second rule $x a^\alpha c^\beta e^{\gamma'} \rightarrow x c^{2n+\beta - \alpha} e^{\alpha+\gamma'-2n}$, $\gamma<\gamma'\le 2n$, $\beta+\gamma' \le 2n <\alpha+\gamma'$}\\
\hline
- &  $ x a^\alpha c^{\beta}  e^{\gamma'} $ &  $ x c^{2n+\beta - \alpha} e^{\alpha+\gamma'-2n}  $ &  $   x c^{2n+\beta - \alpha} e^{\alpha+\gamma'-2n}   $ & $ x c^{2n+\beta - \alpha} e^{\alpha+\gamma'-2n}  $\\
\hline
\multicolumn{5}{|c|}{second rule $x a^\alpha c^\beta e^{\gamma'} \rightarrow x a^{\alpha-\beta} e^{\beta+\gamma'-2n}$, $\gamma<\gamma'\le2n$, $\beta+\gamma',\alpha+\gamma' > 2n$}\\
\hline
- &  $ x a^\alpha c^{\beta}  e^{\gamma'} $ &  $ x c^{2n+\beta - \alpha} e^{\alpha+\gamma'-2n}  $ &  $   x a^{\alpha-\beta} e^{\beta+\gamma'-2n}   $ & $ x a^{\alpha-\beta} e^{\beta+\gamma'-2n}  $\\
\hline
\multicolumn{5}{|c|}{second rule $a^{\alpha'} c^\beta e^\gamma x  \rightarrow a^{\alpha'} c^\beta x $, $0<\alpha' \le \alpha$}\\
\hline
$0< \alpha -\beta \le n$&  $ x a^\alpha c^{\beta}  e^{\gamma}  x$ &  $ x c^{2n+\beta - \alpha} e^{\alpha+\gamma-2n}  x$ &  $   x a^\alpha  c^\beta x  $ & $ x a^{\alpha-\beta} x  $\\
\hline
$n< \alpha -\beta < 2n$&  $ x a^\alpha c^{\beta}  e^{\gamma}  x$ &  $ x c^{2n+\beta - \alpha} e^{\alpha+\gamma-2n}  x$ &  $   x a^\alpha  c^\beta x  $ & $ x c^{2n+\beta - \alpha} e x  $\\
\hline
\multicolumn{5}{|c|}{second rule $x a^{\alpha} c^\beta e^{\gamma'} x  \rightarrow x a^{\alpha -\beta}x $ , $2n\ge \gamma' \ge \gamma$,  $0< \alpha -\beta \le n$}\\
\hline
- &  $ x a^\alpha c^{\beta}  e^{\gamma'}  x$ &  $ x c^{2n+\beta - \alpha} e^{\alpha+\gamma'-2n}  x$ &  $   x a^{\alpha-\beta} x  $ & $ x a^{\alpha-\beta} x  $\\
\hline
\multicolumn{5}{|c|}{second rule $x a^{\alpha} c^\beta e^{\gamma'} x  \rightarrow x c^{2n+\beta - \alpha} e  x$, $2n\ge \gamma' \ge \gamma$,  $n< \alpha -\beta < 2n$}\\
\hline
- &  $ x a^\alpha c^{\beta}  e^{\gamma'}  x$ &  $ x c^{2n+\beta - \alpha} e^{\alpha+\gamma'-2n}  x$ &  $   x c^{2n+\beta - \alpha} e x  $ & $ x c^{2n+\beta - \alpha} e x $\\
\hline
\hline
\multicolumn{5}{|c|}{first rule $x a^\alpha c^\beta e^\gamma \rightarrow x a^{2n+\alpha - \beta} e^{\beta+\gamma-2n}$, $0< \beta,\gamma  \le 2n$, $0\le \alpha \le 2n, \alpha+\gamma \le 2n <\beta+\gamma$}\\
\hline
\hline
\multicolumn{5}{|c|}{second rule $x a^\alpha c^\beta e^{\gamma'} \rightarrow x a^{2n+\alpha - \beta} e^{\beta+\gamma'-2n}$, $\gamma<\gamma'  \le 2n$, $\alpha+\gamma' \le 2n <\beta+\gamma'$}\\
\hline
- & $  x a^\alpha c^\beta e^{\gamma'}$ & $x a^{2n+\alpha - \beta} e^{\beta+\gamma'-2n}$ & $x a^{2n+\alpha - \beta} e^{\beta+\gamma'-2n}$ & $x a^{2n+\alpha - \beta} e^{\beta+\gamma'-2n}$ \\
\hline
\multicolumn{5}{|c|}{second rule $x a^\alpha c^\beta e^{\gamma'} \rightarrow x c^{\beta-\alpha} e^{\alpha+\gamma'-2n}$, $\gamma<\gamma'\le2n$, $\beta+\gamma',\alpha+\gamma' > 2n$}\\
\hline
- & $  x a^\alpha c^\beta e^{\gamma'}$ & $x a^{2n+\alpha - \beta} e^{\beta+\gamma'-2n}$ & $x c^{\beta-\alpha} e^{\alpha+\gamma'-2n}$ & $x c^{\beta-\alpha} e^{\alpha+\gamma'-2n}$ \\
\hline
\multicolumn{5}{|c|}{second rule $a^{\alpha'} c^\beta e^\gamma x  \rightarrow a^{\alpha'} c^\beta x $, $0<\alpha' \le \alpha$}\\
\hline
$0<\beta-\alpha<n$ &  $ x a^\alpha c^{\beta}  e^\gamma x $ &  $  x a^{2n+\alpha - \beta} e^{\beta+\gamma-2n}  x $ &   $   x a^{\alpha} c^\beta  x  $ &  $ x  c^{\beta - \alpha} e x $\\
\hline
$n \le \beta-\alpha<2n$   &  $ x a^\alpha c^{\beta}  e^\gamma x $ &  $  x  a^{2n+\alpha - \beta} e^{\beta+\gamma-2n}  x $ &   $   x a^{\alpha} c^\beta  x  $ &  $ x a^{2n- \beta + \alpha} x $\\
\hline
\multicolumn{5}{|c|}{second rule $x c^\beta e^{\gamma'} x  \rightarrow x c^{\beta}e x$, $2n\ge \gamma' \ge \gamma$,  $0 < \beta < n$, $\alpha = 0$}\\
\hline
- &  $ x c^{\beta}  e^{\gamma'}  x$ &  $ x  a^{2n - \beta} e^{\beta+\gamma'-2n}  x$ &  $   x c^\beta e x  $ & $ x c^\beta e x  $\\
\hline
\multicolumn{5}{|c|}{second rule $x c^\beta e^{\gamma'} x  \rightarrow x a^{2n-\beta} x$, $2n\ge \gamma' \ge \gamma$,  $n \le \beta < 2n$, $\alpha = 0$}\\
\hline
- &  $ x c^{\beta}  e^{\gamma'}  x$ &  $ x  a^{2n - \beta} e^{\beta+\gamma'-2n}  x$ &  $   x a^{2n-\beta} x  $ & $ x a^{2n-\beta} x $\\
\hline
\multicolumn{5}{|c|}{second rule $x c^\beta e^{\gamma'} x  \rightarrow x e x$, $2n\ge \gamma' \ge \gamma$,  $\beta = 2n$, $\alpha = 0$}\\
\hline
- &  $ x c^{\beta}  e^{\gamma'}  x$ &  $ x  a^{2n - \beta} e^{\beta+\gamma'-2n}  x$ &  $   x e x  $ & $ x e x $\\
\hline
\multicolumn{5}{|c|}{second rule $x a^{\alpha} c^\beta e^{\gamma'} x  \rightarrow x c^{\beta -\alpha}e x$, $2n\ge \gamma' \ge \gamma$,  $0< \beta -\alpha < n$}\\
\hline
- &  $ x a^\alpha c^{\beta}  e^{\gamma'}  x$ &  $ x  a^{2n+\alpha - \beta} e^{\beta+\gamma-2n}  x$ &  $   x c^{\beta -\alpha}e x  $ & $ x c^{\beta -\alpha}e x  $\\
\hline
\multicolumn{5}{|c|}{second rule $x a^{\alpha} c^\beta e^{\gamma'} x  \rightarrow x a^{2n-\beta +\alpha} x$, $2n\ge \gamma' \ge \gamma$,  $n \le \beta -\alpha < 2n$}\\
\hline
- &  $ x a^\alpha c^{\beta}  e^{\gamma'}  x$ &  $ x  a^{2n+\alpha - \beta} e^{\beta+\gamma-2n}  x$ &  $   x a^{2n-\beta +\alpha} x  $ & $ x a^{2n-\beta +\alpha} x  $\\
\hline
\end{tabular}
\end{center}

\begin{center}
\begin{tabular}{|c|c|c|c|c|}
\hline
special & $t$ & $t_1$ & $t_2$ & $t_0$ \\
\hline
\hline
\multicolumn{5}{|c|}{first rule $x a^\alpha c^\beta e^\gamma \rightarrow x a^{\alpha - \beta} e^{\beta+\gamma-2n}$, $0< \alpha,\beta,\gamma  \le 2n$, $\alpha+\gamma, \beta+\gamma> 2n$ , $\alpha > \beta$}\\
\hline
\hline
\multicolumn{5}{|c|}{second rule $x a^\alpha c^\beta e^{\gamma'} \rightarrow x a^{\alpha - \beta} e^{\beta+\gamma'-2n}$, $\gamma < \gamma' \le 2n$}\\
\hline
- & $x a^\alpha c^\beta e^{\gamma'} $ & $x a^{\alpha - \beta} e^{\beta+\gamma'-2n}$ & $x a^{\alpha - \beta} e^{\beta+\gamma'-2n}$ & $x a^{\alpha - \beta} e^{\beta+\gamma'-2n}$\\
\hline
\multicolumn{5}{|c|}{second rule $a^{\alpha'} c^\beta e^\gamma x  \rightarrow a^{\alpha'} c^\beta x $, $0<\alpha' \le \alpha$}\\
\hline
$0< \alpha -\beta \le n$&  $ x a^\alpha c^{\beta}  e^{\gamma}  x$ &  $ x a^{\alpha - \beta} e^{\beta+\gamma-2n} x$ &  $   x a^\alpha  c^\beta x  $ & $ x a^{\alpha-\beta} x  $\\
\hline
$n< \alpha -\beta < 2n$&  $ x a^\alpha c^{\beta}  e^{\gamma}  x$ &  $ x a^{\alpha - \beta} e^{\beta+\gamma-2n} x$ &  $   x a^\alpha  c^\beta x  $ & $ x c^{2n+\beta - \alpha} e x  $\\
\hline
\multicolumn{5}{|c|}{second rule $x a^{\alpha} c^\beta e^{\gamma'} x  \rightarrow x a^{\alpha -\beta}x $ , $2n\ge \gamma' \ge \gamma$,  $0< \alpha -\beta \le n$}\\
\hline
- &  $ x a^\alpha c^{\beta}  e^{\gamma'}  x$ &  $ x a^{\alpha - \beta} e^{\beta+\gamma'-2n}   x$ &  $   x a^{\alpha-\beta} x  $ & $ x a^{\alpha-\beta} x  $\\
\hline
\multicolumn{5}{|c|}{second rule $x a^{\alpha} c^\beta e^{\gamma'} x  \rightarrow x c^{2n+\beta - \alpha} e  x$, $2n\ge \gamma' \ge \gamma$,  $n< \alpha -\beta < 2n$}\\
\hline
- &  $ x a^\alpha c^{\beta}  e^{\gamma'}  x$ &  $ x a^{\alpha - \beta} e^{\beta+\gamma'-2n}   x$ &  $   x c^{2n+\beta - \alpha} e x  $ & $ x c^{2n+\beta - \alpha} e x $\\
\hline
\hline
\multicolumn{5}{|c|}{first rule $x a^\alpha c^\beta e^\gamma \rightarrow x e^{\beta+\gamma-2n}$, $0< \alpha,\beta,\gamma  \le 2n$, $\alpha+\gamma, \beta+\gamma> 2n$ , $\alpha = \beta$}\\
\hline
\hline
\multicolumn{5}{|c|}{second rule $x a^\alpha c^\beta e^{\gamma'} \rightarrow x e^{\beta+\gamma'-2n}$, $\gamma< \gamma' \le 2n$ }\\
\hline
- & $ x a^\alpha c^\beta e^{\gamma'} $ & $x e^{\beta+\gamma'-2n} $  & $x e^{\beta+\gamma'-2n}$ & $x e^{\beta+\gamma'-2n}$ \\
\hline
\multicolumn{5}{|c|}{second rule $a^{\alpha'} c^\beta e^\gamma x  \rightarrow a^{\alpha'} c^\beta x $, $0<\alpha' \le \alpha$}\\
\hline
- & $ x a^\alpha c^\beta e^{\gamma} x $ & $x e^{\beta+\gamma-2n} x$  & $x a^{\alpha} c^\beta x$  & $x e x$ \\
\hline
\multicolumn{5}{|c|}{second rule $ x a^{\alpha} c^\beta e^{\gamma'} x  \rightarrow x e x $, $\gamma \le \gamma' \le 2n$}\\
\hline
- & $ x a^\alpha c^\beta e^{\gamma'} x $ & $x e^{\beta+\gamma'-2n} x$  & $x e x$  & $x e x$ \\
\hline
\hline
\multicolumn{5}{|c|}{first rule $x a^\alpha c^\beta e^\gamma \rightarrow x c^{\beta - \alpha} e^{\alpha+\gamma-2n}$, $0< \alpha,\beta,\gamma  \le 2n$, $\alpha+\gamma, \beta+\gamma> 2n$ , $\alpha < \beta$}\\
\hline
\hline
\multicolumn{5}{|c|}{second rule $x a^\alpha c^\beta e^{\gamma'} \rightarrow x c^{\beta - \alpha} e^{\alpha+\gamma'-2n}$, $\gamma < \gamma' \le 2n$}\\
\hline
- & $x a^\alpha c^\beta e^{\gamma'} $  & $ x c^{\beta - \alpha} e^{\alpha+\gamma'-2n}$ & $ x c^{\beta - \alpha} e^{\alpha+\gamma'-2n}$ & $ x c^{\beta - \alpha} e^{\alpha+\gamma'-2n}$ \\
\hline
\multicolumn{5}{|c|}{second rule $a^{\alpha'} c^\beta e^\gamma x  \rightarrow a^{\alpha'} c^\beta x $, $0<\alpha' \le \alpha$}\\
\hline
$0<\beta-\alpha<n$ &  $ x a^\alpha c^{\beta}  e^\gamma x $ &  $  x c^{\beta - \alpha} e^{\alpha+\gamma-2n}  x $ &   $   x a^{\alpha} c^\beta  x  $ &  $ x  c^{\beta - \alpha} e x $\\
\hline
$n \le \beta-\alpha<2n$   &  $ x a^\alpha c^{\beta}  e^\gamma x $ &  $  x  c^{\beta - \alpha} e^{\alpha+\gamma-2n}  x $ &   $   x a^{\alpha} c^\beta  x  $ &  $ x a^{2n- \beta + \alpha} x $\\
\hline
\multicolumn{5}{|c|}{second rule $x a^{\alpha} c^\beta e^{\gamma'} x  \rightarrow x c^{\beta-\alpha} e x $, $\gamma \le \gamma' \le 2n$, $0<\beta-\alpha <n$}\\
\hline
- &  $ x a^\alpha c^{\beta}  e^{\gamma'} x $ &  $  x c^{\beta - \alpha} e^{\alpha+\gamma'-2n}  x $ &   $   x  c^{\beta-\alpha} e  x  $ &  $ x  c^{\beta - \alpha} e x $\\
\hline
\multicolumn{5}{|c|}{second rule $x a^{\alpha} c^\beta e^{\gamma'} x  \rightarrow x a^{2n-\beta+\alpha} x $, $\gamma \le \gamma' \le 2n$, $n\le\beta-\alpha <2n$}\\
\hline
-  &  $ x a^\alpha c^{\beta}  e^{\gamma'} x $ &  $  x  c^{\beta - \alpha} e^{\alpha+\gamma'-2n}  x $ &   $   x a^{2n-\beta+\alpha}  x  $ &  $ x a^{2n- \beta + \alpha} x $\\
\hline
\hline
\multicolumn{5}{|c|}{first rule $a^{\alpha} c^\beta e^\gamma x  \rightarrow a^{\alpha} c^\beta x $, $0<\alpha,\gamma \le 2n$, $0\le \beta \le 2n$}\\
\hline
\hline
\multicolumn{5}{|c|}{second rule $ a^{\alpha'} c^\beta e^\gamma x  \rightarrow a^\alpha c^\beta x $, $2n \ge \alpha' > \alpha$}\\
\hline
- &  $  a^{\alpha'} c^\beta e^\gamma x $ &  $  a^{\alpha'} c^\beta   x $ &   $  a^{\alpha'} c^\beta x   $ &  $ a^{\alpha'} c^\beta x  $\\
\hline
\multicolumn{5}{|c|}{second rule $x a^{\alpha'} c^\beta e^{\gamma} x  \rightarrow x a^{\alpha' -\beta}x $ , $2n\ge \alpha' \ge \alpha$,  $0< \alpha' -\beta \le n$}\\
\hline
- & $x a^{\alpha'} c^\beta e^{\gamma} x$ & $ x  a^{\alpha'} c^\beta x$ & $x a^{\alpha' -\beta}x$  & $ x a^{\alpha' -\beta}x$ \\
\hline
\multicolumn{5}{|c|}{second rule $x a^{\alpha'} c^\beta e^{\gamma} x  \rightarrow x c^{2n-\alpha' +\beta} ex $ , $2n\ge \alpha' \ge \alpha$,  $n< \alpha' -\beta <2n$}\\
\hline
- & $x a^{\alpha'} c^\beta e^{\gamma} x$ & $ x  a^{\alpha'} c^\beta x$ & $x c^{2n-\alpha' +\beta} e x$  & $ x c^{2n-\alpha' +\beta} e x$ \\
\hline
\multicolumn{5}{|c|}{second rule $x a^{\alpha'} c^\beta e^{\gamma} x  \rightarrow x  ex $ , $2n\ge \alpha' \ge \alpha$,  $ \alpha' = \beta$}\\
\hline
- & $x a^{\alpha'} c^\beta e^{\gamma} x$ & $ x  a^{\alpha'} c^\beta x$ & $x e x$  & $ x e x$ \\
\hline
\multicolumn{5}{|c|}{second rule $x a^{\alpha'} c^\beta e^{\gamma} x  \rightarrow x c^{\beta -\alpha'} ex $ , $2n\ge \alpha' \ge \alpha$,  $0<\beta - \alpha' <n$}\\
\hline
- & $x a^{\alpha'} c^\beta e^{\gamma} x$ & $ x  a^{\alpha'} c^\beta x$ & $x c^{\beta -\alpha'} e x$  & $ x c^{\beta -\alpha'} e x$ \\
\hline
\multicolumn{5}{|c|}{second rule $x a^{\alpha'} c^\beta e^{\gamma} x  \rightarrow x a^{2n-\beta +\alpha'} x $ , $2n\ge \alpha' \ge \alpha$,  $n \le \beta - \alpha' <2n$}\\
\hline
- & $x a^{\alpha'} c^\beta e^{\gamma} x$ & $ x  a^{\alpha'} c^\beta x$ & $x a^{2n-\beta +\alpha'}  e x$  & $ x a^{2n-\beta +\alpha'}  x$ \\
\hline

\end{tabular}
\end{center}

\end{document}